\numberwithin{equation}{section}
\newtheorem{Lemma}[equation]{Lemma}
\newtheorem{Proposition}[equation]{Proposition}
\newtheorem{Theorem}[equation]{Theorem}
\newtheorem{Corollary}[equation]{Corollary}
\theoremstyle{definition}
\newtheorem{Remark}[equation]{Remark}
\def\iso{\cong}
\def\lan{\langle}
\def\ran{\rangle}
\def\bar{\overline}
\def\ad{{\operatorname{ad}}}
\def\Ann{\operatorname{Ann}}
\def\BV{\operatorname{BV}}
\def\RS{\operatorname{RS}}
\def\DT{\operatorname{DT}}
\def\C{{\mathbb C}}
\def\Z{{\mathbb Z}}
\def\part{\operatorname{part}}
\def\Prim{\operatorname{Prim}}
\def\col{\operatorname{col}}
\def\row{\operatorname{row}}
\def\End{{\operatorname{End}}}
\def\sign{{\operatorname{sign}}}
\def\Tab{{\operatorname{Tab}}}
\def\sTab{{\operatorname{sTab}}}
\def\sRow{{\operatorname{sRow}}}
\def\content{{\operatorname{content}}}
\def\word{{\operatorname{word}}}
\def\LT{{\operatorname{LT}}}
\def\eps{\epsilon}
\def\mf {\mathfrak}
\def\b{\mathfrak b}
\def\g{\mathfrak g}
\def\m{\mathfrak m}
\def\q{\mathfrak q}
\def\t{\mathfrak t}
\def\u{\mathfrak u}
\def\gl{\mathfrak{gl}}
\def\sl{\mathfrak{sl}}
\def\so{\mathfrak{so}}
\def\sp{\mathfrak{sp}}
\def\aa{\mathbf a}
\def\cc{\mathbf c}
\def\bp{{\mathbf p}}
\def\bq{{\mathbf q}}
\def\cS{\mathcal S}
\newdimen\Hoogte    \Hoogte=12pt    
\newdimen\Breedte   \Breedte=12pt   
\newdimen\Dikte     \Dikte=0.5pt    
\newenvironment{Young}{\begingroup
       \def\vr{\vrule height0.8\Hoogte width\Dikte depth 0.2\Hoogte}
       \def\fbox##1{\vbox{\offinterlineskip
                    \hrule height\Dikte
                    \hbox to \Breedte{\vr\hfill##1\hfill\vr}
                    \hrule height\Dikte}}
       \vbox\bgroup \offinterlineskip \tabskip=-\Dikte \lineskip=-\Dikte
            \halign\bgroup &\fbox{##\unskip}\unskip  \crcr }
       {\egroup\egroup\endgroup}
\def\Diagram#1{\relax\ifmmode\vcenter{\,\begin{Young}#1\end{Young}\,}\else%
              $\vcenter{\,\begin{Young}#1\end{Young}\,}$\fi}
\title{
Representation theory of type B and C standard Levi $W$-algebras
}
\author
{Jonathan Brown and Simon M. Goodwin}
\address{Mathematics Department, Gonzaga University,
Spokane, WA 99258, USA}
\email{brownj3@gonzaga.edu}
\address{School of Mathematics, University of Birmingham, Birmingham, B15 2TT,~UK}
\email{s.m.goodwin@bham.ac.uk}
\thanks{2010 {\it Mathematics Subject Classification}:  17B10,
81R05.}
\begin{document}

\begin{abstract}
We classify the
finite dimensional irreducible
representations
with
integral central character
of finite $W$-algebras $U(\g,e)$ associated to
standard Levi
nilpotent orbits in classical Lie
algebras of types B and C.  This classification
is given explicitly in terms of the highest weight theory
for finite $W$-algebras.
\end{abstract}

\maketitle

\section{Introduction}

Let $e$ be a nilpotent element in the Lie algebra $\g$ of a reductive algebraic group
$G$ over $\C$.  The finite $W$-algebra
$U(\g,e)$ associated to the pair $(\g,e)$ is an associative algebra obtained from $U(\g)$ by a certain
quantum Hamiltonian reduction.
There has been a great deal of recent interest in finite $W$-algebras and their representation theory,
for an overview see the survey article by Losev, \cite{Lo4}.

In recent work \cite{BG1} and \cite{BG3} the authors gave a combinatorial classification of the
finite dimensional irreducible
$U(\g,e)$-modules, where $\g$ is a
classical Lie algebra and $e$ is an even multiplicity nilpotent element;
we recall that $e$ is said to be {\em even multiplicity}
if all parts of the Jordan type of $e$ occur with even multiplicity.
This classification is given in terms of highest weight theory for finite $W$-algebras
from \cite{BGK}.

Now recall that a nilpotent element $e$ of $\g$ is said to be of {\em standard Levi type}
if $e$ is in the regular nilpotent orbit of a Levi subalgebra of $\g$.
It is easy to check that in case $\g$ is of classical type and $e$ is even multiplicity, then $e$ is
standard Levi.
In this paper we extend the results of \cite{BG1} to classify the
finite dimensional irreducible $U(\g,e)$-modules with integral central
character, where $\g$ is of type B or C
and $e$ is any {\em standard Levi nilpotent} element, see Theorem
\ref{T:mainintro}.
In \cite{BG4} we will supplement this theorem by classifying
finite dimensional irreducible $U(\g,e)$-modules for such $\g$ and $e$
of any (not necessarily integral) central character.  We recall, see for example
the footnote to \cite[Question 5.1]{Pr2},
that the centre of $U(\g,e)$ is canonically identified
with the centre of $U(\g)$, which allows one to define integral
central characters.

The situation for $\g$ of type D and $e$ standard Levi, but not
even multiplicity, is more awkward.  This can be dealt with 
using similar methods.

We remark here that finite $W$-algebras
corresponding to nilpotent elements of standard Levi type
are a natural class to consider.
This is because such finite $W$-algebras are particularly amenable to the highest
weight theory from \cite{BGK} as explained in \S\ref{ss:hw}.

\medskip

In \cite{LO}, Losev and Ostrik, have accomplished a classification of the finite dimensional $U(\g,e)$-modules
of integral central character for any reductive Lie algebra $\g$ in the following manner.
In \cite{Lo1} Losev gives a surjection from the primitive ideals
of finite codimension of $U(\g,e)$ to the primitive ideals of $U(\g)$ which have associated variety equal to
the closure $\overline{G \cdot e}$ of the $G$-orbit of $e$.
There is a natural action of the component group $C$ of the centralizer of $e$
in $G$ on the set of primitive ideals of $U(\g,e)$, as explained for
example in the introduction to \cite{Lo2}.  In \cite{Lo2} Losev extends
his results from \cite{Lo1} to show that the fibres of the above surjection
are precisely $C$-orbits.
Losev's and Ostrik's classification in \cite{LO} is accomplished by describing the fibres of this map, i.e.\
determining the stabilizer of the $C$-orbit for each fibre.
The primitive ideals with associated variety equal to $\overline{G \cdot e}$ can be described due to
methods of a variety of mathematicians in the 70's and 80's, see for example \cite{Ja}
and the references therein for details.

\medskip

We go on to explain the results of this paper in more detail, so we take $\g$
to be of type B or C, i.e.\ $\g = \so_{2n+1}$ or $\g = \sp_{2n}$ for some $n \in \Z_{\ge 2}$.
We recall that nilpotent orbits in $\g$ are parameterized by their Jordan type.  Thus
they are given by partitions of $2n+1$ (respectively $2n$) where
all even (respectively odd) parts occur with
even multiplicity when $\g = \so_{2n+1}$ (respectively $\g = \sp_{2n}$).
In this paper we consider only nilpotent orbits, which are standard Levi, but not even multiplicity, as the latter
are dealt with in \cite{BG1} and \cite{BG2}.
This means that the Jordan type of $e$ is given by a partition of the form
\[
  \bp = (p_1^{2a_1} < p_2^{2a_2} < \dots < p_{d-1}^{2a_{d-1}} < p_d^{2a_d+1} < p_{d+1}^{2a_{d+1}} < \dots < p_r^{2a_r}),
\]
i.e.\ all parts of $\bp$ occur with even multiplicity except for one part $p_d$, which
occurs with odd multiplicity.
It will be more convenient for us to re-index this partition and write it as
\[
  \bp = (p_1^2 \le p_2^2 \le \dots \le p_{d-1}^2 < p_0 \le p_d^2 \le \dots \le p_r^2).
\]

In this paper, we only consider finite dimensional irreducible representations for $U(\g,e)$
with integral central character.
As we explain in \S\ref{SS:Losev}, such representations occur only when $e$ is a special nilpotent element
in the sense of Lusztig from \cite[13.1.1]{Lu}.
In terms
of the partition $\bp$ this means that the dual partition of $\bp$ is the Jordan type of a
nilpotent orbit in $\g$.  Explicitly, this means that $p_i$ must be odd for all $i \ge d$ when $\g = \so_{2n+1}$,
or $p_i$ must be even for all $i \le d$ when $\g = \sp_{2n}$.  For the remainder of the
paper we assume that $\bp$ is a partition as above, which satisfies these conditions.

We use {\em symmetric pyramids} to describe much of the combinatorics underlying
$U(\g,e)$-modules.
The symmetric pyramid for $\bp$, denoted by $P$, is a finite connected collection of boxes in the plane such
that:
\begin{itemize}
\item[-]
the boxes are arranged in connected rows;
\item[-]
the boxes are symmetric with respect to both the $y$-axis and the $x$-axis.
\item[-]
each box is 2 units by 2 units;
\item[-]
the lengths of the rows from top to bottom are given by
\[
  p_1 \dots, p_r, p_0, p_r, \dots, p_1.
\]
\end{itemize}

An {\em s-table} with underlying symmetric pyramid $P$
is a skew symmetric (with respect to the origin) filling of $P$ with complex numbers.
We define $\sTab(P)$ to be a certain set of s-tables depending on whether
$\g = \so_{2n+1}$ of $\sp_{2n}$.  For $\g = \sp_{2n}$ we let $\sTab(P)$
denote the set s-tables with underlying symmetric pyramid $P$ such that
all entries are integers; whereas for $\g = \so_{2n+1}$, we define $\sTab(P)$
to be the s-tables such that either all entries are in $\Z$ or all entries are in
$\frac{1}{2} + \Z$.
Let $\sTab^\leq(P)$ denote the elements of $\sTab(P)$ which have non-decreasing rows.
As explained in \S\ref{SS:stables} the elements of $\sTab^\leq(P)$ parameterize the irreducible
highest weight $U(\g,e)$-modules; given $A \in \sTab(P)$ we write $L(A)$ for the corresponding
irreducible highest weight $U(\g,e)$-module.

An example of an s-table in $\sTab^{\le}(P)$, when $\g = \sp_{2n}$, $\bp = (5^2,4,2^2)$
and $P$ is the symmetric pyramid for $\bp$, is
\begin{equation} \label{EQ:stable}
\begin{array}{c}
\begin{picture}(100,100)
   \put(30,0){\line(1,0){40}}
   \put(0,20){\line(1,0){100}}
   \put(0,40){\line(1,0){100}}
   \put(0,60){\line(1,0){100}}
   \put(0,80){\line(1,0){100}}
   \put(30,100){\line(1,0){40}}

    \put(30,0){\line(0,1){20}}
    \put(50,0){\line(0,1){20}}
    \put(70,0){\line(0,1){20}}

   \put(0,20){\line(0,1){20}}
   \put(20,20){\line(0,1){20}}
   \put(40,20){\line(0,1){20}}
   \put(60,20){\line(0,1){20}}
   \put(80,20){\line(0,1){20}}
   \put(100,20){\line(0,1){20}}

   \put(10,40){\line(0,1){20}}
   \put(30,40){\line(0,1){20}}
   \put(50,40){\line(0,1){20}}
   \put(70,40){\line(0,1){20}}
   \put(90,40){\line(0,1){20}}

   \put(0,60){\line(0,1){20}}
   \put(20,60){\line(0,1){20}}
   \put(40,60){\line(0,1){20}}
   \put(60,60){\line(0,1){20}}
   \put(80,60){\line(0,1){20}}
   \put(100,60){\line(0,1){20}}

    \put(30,80){\line(0,1){20}}
    \put(50,80){\line(0,1){20}}
    \put(70,80){\line(0,1){20}}

     \put(38,10){\makebox(0,0){{-$7$}}}
     \put(58,10){\makebox(0,0){{-$6$}}}

     \put(8,30){\makebox(0,0){{-$9$}}}
     \put(28,30){\makebox(0,0){{-$8$}}}
     \put(48,30){\makebox(0,0){{-$5$}}}
     \put(68,30){\makebox(0,0){{-$4$}}}
     \put(88,30){\makebox(0,0){{-$2$}}}

     \put(18,50){\makebox(0,0){{-$3$}}}
     \put(38,50){\makebox(0,0){{-$1$}}}
     \put(60,50){\makebox(0,0){{$1$}}}
     \put(80,50){\makebox(0,0){{$3$}}}

     \put(10,70){\makebox(0,0){{$2$}}}
     \put(30,70){\makebox(0,0){{$4$}}}
     \put(50,70){\makebox(0,0){{$5$}}}
     \put(70,70){\makebox(0,0){{$8$}}}
     \put(90,70){\makebox(0,0){{$9$}}}

     \put(40,90){\makebox(0,0){{$6$}}}
     \put(60,90){\makebox(0,0){{$7$}}}
\end{picture}
\end{array}.
\end{equation}

The {\em left justification} of an s-table is the diagram created by left-justifying all of the s-table's rows.
We say an s-table is {\em justified row equivalent to column strict} if the row equivalence class of
its left justification contains a table in which every column is strictly decreasing; we note that there can be a
gap in the middle of some columns and we require entries to be strictly decreasing across this gap.
We write $\sTab^\cc(P)$ for the set of all $A \in \sTab(P)$, which are justified row equivalent to column strict.
It is easy to see that the example of the s-table above is an element of $\sTab^\cc(P)$.

Recall that $C$ denotes the component group of the centralizer of $e$ in $G$.
In \S\ref{SS:compgroup} we define an action of $C$ on the subset of $\sTab^\le(P)$
corresponding to finite dimensional $U(\g,e)$-modules.

Now we can state the main theorem of this paper.

\begin{Theorem} \label{T:mainintro}
Let $\g = \so_{2n+1}$ or $\sp_{2n}$, let $\bp$ be a partition corresponding to a standard Levi special nilpotent orbit in $\g$, let $e$ be an element of this orbit and let $P$ be
the symmetric pyramid for $\bp$.
Then
$$
\{ L(A) \mid  \text{$A \in \sTab^\leq(P)$, $A$ is $C$-conjugate to some $B \in \sTab^\cc(P)$}\}
$$
is a complete set of isomorphism classes of finite dimensional irreducible $U(\g,e)$-modules with integral central character.
Moreover, the $C$-action on s-tables agrees with the $C$-action on finite dimensional irreducible $U(\g,e)$-modules.
\end{Theorem}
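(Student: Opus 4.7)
My approach would follow the overall framework of \cite{BG1} for the even-multiplicity case, adapted to accommodate the one odd-multiplicity part $p_0$ of $\bp$. The starting point is the highest weight theory of \cite{BGK} recalled in \S\ref{SS:stables}: the modules $\{L(A) \mid A \in \sTab^\le(P)\}$ already exhaust the irreducible highest weight $U(\g,e)$-modules and every finite dimensional irreducible is of this form. Thus the task reduces to characterising those $A \in \sTab^\le(P)$ for which $L(A)$ is finite dimensional of integral central character, and to matching the combinatorially defined $C$-action with the representation-theoretic one.

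First I would establish sufficiency of the column-strict condition: if $A$ is $C$-conjugate to some $B \in \sTab^\cc(P)$, then $L(A)$ is finite dimensional. Using the parabolic presentation and change of highest weight techniques from \cite{BGK}, I would realise $L(B)$ as a composition factor of a module parabolically induced from a type A finite $W$-algebra, where the column-strict criterion for finite dimensionality is due to Brundan--Kleshchev. The $C$-equivariance of the highest weight construction then extends this conclusion to every $C$-conjugate of such $B$.

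For necessity and completeness I would invoke the Losev--Ostrik classification \cite{LO}. By \cite{Lo1,Lo2}, the isomorphism classes of finite dimensional irreducible $U(\g,e)$-modules with a fixed integral central character surject onto the primitive ideals of $U(\g)$ of that central character with associated variety $\overline{G \cdot e}$, with fibres equal to $C$-orbits; as explained in \S\ref{SS:Losev}, the special nilpotent hypothesis is precisely what guarantees that such primitive ideals exist. These primitive ideals admit a classical parameterisation via Duflo--Joseph and the Robinson--Schensted-type combinatorics for classical Lie algebras, while on the other side the $C$-orbits in $\sTab^\le(P)$ that meet $\sTab^\cc(P)$ admit a direct combinatorial description from the definitions. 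The core work is to produce a bijection between the two parameterising sets that is compatible with the $C$-action; this simultaneously forces the enumeration on both sides to match (ruling out any further finite dimensional $L(A)$) and identifies the combinatorial $C$-action from \S\ref{SS:compgroup} with the representation-theoretic one.

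The main obstacle, and the new content relative to \cite{BG1}, is producing this bijection in the presence of the odd-multiplicity row $p_0$. In the even-multiplicity setting the rows of the symmetric pyramid pair up cleanly, which drives the counting; the distinguished middle row breaks this symmetry and requires careful attention to how entries lying on or straddling the midline affect the column-strict condition, the $C$-action, and the relationship with the dual partition of $\bp$ that parameterises candidate associated varieties. A secondary technical point is running the argument uniformly across integer and half-integer s-tables in the $\g = \so_{2n+1}$ case.
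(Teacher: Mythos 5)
Your proposal correctly identifies the overall framework --- BGK highest weight theory to reduce to parameterising finite dimensional $L(A)$, Losev's $\cdot^\dagger$ map and its fibres to bring in primitive ideals and the $C$-action, and combinatorics of s-tables --- but it diverges from what actually works in two substantive ways and leaves the real core of the argument unaddressed.

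For sufficiency, you propose to show that $L(B)$ is finite dimensional for $B \in \sTab^\cc(P)$ by realising it as a composition factor of a parabolically induced module from a type A finite $W$-algebra, invoking Brundan--Kleshchev's column-strict criterion. This is not viable as stated: the minimal Levi $\g_0$ contains an irreducible factor of type B or C (namely $\so_{p_0}$ or $\sp_{p_0}$), and there is no parabolic induction functor in the sense of \cite{BGK} from a type A $W$-algebra into $U(\g,e)$. The remark after the statement of the theorem about the Miura map is only available \emph{after} the theorem has been proved, and only under the extra hypothesis that all parts of $\bp$ share a parity. The paper's sufficiency argument is instead direct and purely combinatorial: Theorem \ref{T:recs} shows that $B \in \sTab^\cc(P)$ is equivalent to $\part(\RS(B)) = \bp$; the Barbasch--Vogan algorithm (Theorem \ref{T:BV}) then identifies the associated variety of $\Ann_{U(\g)}L(\lambda_B)$ as $\bar{G\cdot e}$; and Theorem \ref{T:BGKt2} converts this to finite dimensionality of $L(B)$. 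The extension to $C$-conjugates is then Proposition \ref{P:caction3}.

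For necessity, your proposal asks for ``a bijection between the two parameterising sets that is compatible with the $C$-action'' but gives no mechanism. The paper does not build such a global bijection. It proves the case where $\bp$ has three parts (i.e.\ $r=1$) directly, by combining Losev's surjection with Garfinkle's $\tau$-invariant and the Barbasch--Vogan combinatorics to classify both the finite dimensional $L(A)$ and the $C$-action explicitly (Theorems \ref{T:caction1} and \ref{T:caction2}); these are Sections \ref{S:Ccase} and \ref{S:Bcase} and constitute most of the paper. The general case is then an induction on $r$: a ``Levi subalgebra'' reduction of the $W$-algebra (as in \cite{BG1}) shows $A^2_{-2}$ is justified row equivalent to column strict; Lemma \ref{L:Aplus} controls $A^1_r$; the row-swap operators and Proposition \ref{P:changehw} let one move to the parabolic $\q_\sigma$ for $\sigma = \bar s_{r-1}\cdots \bar s_1$ so that the $r=1$ analysis can be applied to the middle three rows; and a careful column-counting argument via Lemma \ref{L:altcRS} pins down $\part(\RS(A))$ to the point where one concludes that either $A$ or $c_1 \cdot A$ is justified row equivalent to column strict. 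Without this inductive machinery --- and in particular without an explicit resolution of the three-row case --- the proposal has a genuine gap. Indeed the introduction to the paper flags exactly this: the three-row case is the pivot, and handling it without Losev, Barbasch--Vogan and Garfinkle would require an explicit presentation of these $W$-algebras that is not currently available.
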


Analogous results to \cite[Corollaries 5.17 and 5.18]{BG1} hold
in the present situation.
So when all parts of $\bp$ have the same parity, if $L(A)$ is finite dimensional, then in fact,
$A$ is row equivalent to column strict as an
s-table.  Thus in this case $L(A)$ can be obtained as a subquotient of
the restriction of a finite dimensional $U(\g(0))$-module via the Miura map.  We refer the reader to
the discussion before \cite[Corollary 5.18]{BG1} for more details, and to \S\ref{SS:def} below
for the definition of $\g(0)$.

Theorem \ref{T:mainintro} and the correspondence of finite dimensional irreducible $U(\g,e)$-modules and primitive
ideals of $U(\g)$ with associated variety $\overline{G \cdot e}$ discussed above,
allow us to deduce the following corollary.  It gives an explicit description of
the primitive ideals of $U(\g)$ which have
associated variety equal to $\overline{G \cdot e}$ and integral central character.
A method to classify these primitive ideals was originally given by Barbasch and Vogan in \cite{BV1}.
In the corollary, $L(\lambda_A)$ denotes the irreducible highest weight $U(\g)$-module
defined from an s-table $A$ as explained in \S\ref{SS:stables} below.

\begin{Corollary}
The set of primitive ideals with integral central character and associated variety $\overline{G \cdot e}$
is equal to
\[
    \{ \Ann_{U(\g)} L(\lambda_A) \mid A \in \sTab^\cc(P) \cap \sTab^\leq(P) \}.
\]
\end{Corollary}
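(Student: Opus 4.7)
The plan is to deduce the Corollary by combining Theorem \ref{T:mainintro} with Losev's surjection from \cite{Lo1,Lo2} described in the introduction, together with the compatibility of the Losev map with annihilators of highest weight modules.

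First, I would recall the Losev setup: primitive ideals of finite codimension in $U(\g,e)$ surject onto primitive ideals of $U(\g)$ with associated variety $\overline{G\cdot e}$, and by \cite{Lo2} each fibre is a single $C$-orbit. Under the identification of $Z(U(\g,e))$ with $Z(U(\g))$, this surjection is compatible with central characters, so restricting to integral central character on both sides preserves the $C$-orbit fibration. In particular, the primitive ideals on the right hand side of the Corollary are parameterized by $C$-orbits of annihilators $\Ann_{U(\g,e)} L$ where $L$ ranges over finite dimensional irreducible $U(\g,e)$-modules of integral central character.

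Second, by Theorem \ref{T:mainintro}, such modules are precisely the $L(A)$ for $A \in \sTab^\le(P)$ lying in a $C$-orbit that meets $\sTab^\cc(P)$, and the combinatorial $C$-action on these s-tables matches the module-theoretic $C$-action. Hence the integral central character primitive ideals of $U(\g)$ with associated variety $\overline{G\cdot e}$ are in bijection with $C$-orbits on the set $\{A \in \sTab^\le(P) \mid A \text{ is $C$-conjugate to some $B \in \sTab^\cc(P)$}\}$. To match this bijection with the statement of the Corollary two things remain: (i) that $\sTab^\cc(P) \cap \sTab^\le(P)$ is a set of representatives for these $C$-orbits, and (ii) that under Losev's map $\Ann_{U(\g,e)} L(A) \mapsto \Ann_{U(\g)} L(\lambda_A)$ for each such representative $A$.

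Point (ii) is essentially built into the definition of $\lambda_A$ in \S\ref{SS:stables} and the highest weight theory of \cite{BGK}; the module $L(\lambda_A)$ is constructed precisely so that its annihilator is the image under the Losev surjection of the annihilator of $L(A)$, so no new work is needed here beyond citing the established framework. Point (i) is a purely combinatorial statement about the $C$-action defined in \S\ref{SS:compgroup}: one needs to check that inside each $C$-orbit of an $A \in \sTab^\le(P)$ satisfying the hypothesis of Theorem \ref{T:mainintro}, there is a unique representative whose left justification is row equivalent to column strict. This is the main obstacle and should be handled by a careful analysis of how $C$ permutes entries of the symmetric pyramid (swaps of equal-content boxes across rows of matching length) and verifying that imposing both nondecreasing rows and the column-strict condition on the left justification rigidifies the choice.
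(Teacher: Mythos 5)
Your overall strategy matches the paper's: combine Theorem~\ref{T:mainintro} with Losev's $\cdot^\dagger$ map (compatible with central characters and with fibres the $C$-orbits), and with the explicit computation of $\cdot^\dagger$ on highest weight annihilators, to read off the primitive ideals. Your point (ii) is exactly Theorem~\ref{T:BGKt1} (equivalently Theorem~\ref{T:BGKt3}), and it is better to cite that theorem directly rather than saying the compatibility is built into the definitions --- this identity is a genuine theorem depending on \cite{MS} and \cite{Lo3}, not a definitional unwinding.

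The gap is in how you handle point (i). You propose to prove directly, by a ``careful analysis of how $C$ permutes entries of the symmetric pyramid,'' that each relevant $C$-orbit meets $\sTab^\cc(P) \cap \sTab^\leq(P)$ exactly once. This is substantial new combinatorial work that you do not carry out, and it is not how the paper proceeds. The paper instead invokes Theorem~\ref{T:recs2}, which says that if $A,B \in \sTab^\cc(P)$ give $\Ann_{U(\g)} L(\lambda_A) = \Ann_{U(\g)} L(\lambda_B)$ then $A=B$; this is proved using Garfinkle's classification of primitive ideals \cite[Theorem~3.5.11]{Ga3} together with van Leeuwen's result \cite[Proposition~4.2.3]{Le} and Theorem~\ref{T:recs}, not by an analysis of the $C$-action. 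Since the $C$-action on s-tables matches the $C$-action on primitive ideals (Proposition~\ref{P:caction3}) and those $C$-orbits are precisely the fibres of $\cdot^\dagger$, Theorem~\ref{T:recs2} immediately forces each $C$-orbit to meet $\sTab^\cc(P) \cap \sTab^\leq(P)$ at most once; the ``at least once'' direction is already the content of Theorem~\ref{T:mainintro}. Moreover, for the bare set-equality asserted in the Corollary you only need the ``at least once'' direction, so Theorem~\ref{T:recs2} is needed only to upgrade the equality to a parametrization, and even then a fresh combinatorial argument is unnecessary.
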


Below we give an outline of the proof of Theorem \ref{T:mainintro}.

The key step is to deal with the case where $\bp$ has three parts.
We deal with this case using the relationship between
finite dimensional irreducible representations of $U(\g,e)$
and primitive ideals of $U(\g)$ with associated variety equal to
$\overline{G \cdot e}$.  Using this and results of Barbasch--Vogan
and Garfinkle on primitive ideals, we are able to classify
finite dimensional irreducible modules for $U(\g,e)$ and
explicitly describe the component group action.  These results are
stated in Theorems \ref{T:caction1} and \ref{T:caction2}.

In Section \ref{S:general}, we use inductive methods to deduce
Theorem \ref{T:mainintro}.  The important ingredients here are
``Levi subalgebras'' of $U(\g,e)$ as defined in \cite[\S3]{BG1}
and changing highest weight theories.  The latter is dealt with
in \cite{BG2} for the case of even multiplicity nilpotent orbit,
and we observe here that there is an analogous theory
in the present situation, see Proposition \ref{P:changehw}.

We note that if we were able to deal with the case where $\bp$ has three
parts by another means, for example from an explicit presentation
of the finite $W$-algebras, then we would be able to remove the
dependence on the results of Losev, Barbasch--Vogan and Garfinkle.
It would, therefore, be interesting and useful to have a presentation
of such finite $W$-algebras.

\subsection*{Acknowledgments}  This research is funded by EPSRC grant EP/G020809/1.

\section{Overview of finite $W$-algebras}

\subsection{Definition of the finite $W$-algebra $U(\mf{g},e)$} \label{SS:def}

Let $G$ be a reductive algebraic group over $\C$ with Lie algebra $\g$.
The finite $W$-algebra $U(\g,e)$ is defined in terms of a nilpotent
element $e \in \mf{g}$.  By the Jacobson--Morozov Theorem, $e$ embeds
into an $\sl_2$-triple $(e,h,f)$.  The $\ad h$ eigenspace
decomposition gives a grading on
$\mf{g}$:
\begin{equation} \label{EQ:grading}
  \mf{g} = \bigoplus_{j \in \Z} \mf{g}(j),
\end{equation}
where $\mf{g}(j) = \{ x \in \mf{g} \mid [h,x] = j x \}$.
Define the character $\chi : \mf{g} \to \C$ by
$\chi(x) = (x,e)$, where $(\,\cdot,\cdot)$ is a non-degenerate symmetric
invariant bilinear form on $\mf{g}$.
Then we can define a non-degenerate symplectic form
$\lan \,\cdot, \cdot \ran$
on $\mf{g}(-1)$ by
$\lan x,y \ran = \chi([y,x])$.
Choose a Lagrangian subspace
$\mf{l} \subseteq \mf{g}(-1)$ with respect to $\lan \,\cdot, \cdot \ran$,
and let
$\mf{m} = \mf{l} \oplus \bigoplus_{j \le -2} \mf{g}(j)$.
Let $\m_\chi = \{m - \chi(m) \mid m \in \mf{m}\}$.
The adjoint action of $\mf{m}$ on $U(\mf{g})$
leaves the left ideal $U(\g) \m_\chi$ invariant,
so there is an induced adjoint action of $\mf{m}$ on
$Q_\chi = U(\mf{g})/U(\g)\m_\chi$.
The space of fixed points $Q_\chi^{\mf{m}}$ inherits a well defined multiplication
from $U(\mf{g})$, making it an associative algebra, and we define the finite $W$-algebra
to be
$$
U(\mf{g},e) = Q_\chi^{\mf{m}} = \{ u + U(\g)\m_\chi \in Q_\chi \mid [x,u] \in U(\g)\m_\chi \text{ for all } x \in \m\}.
$$

We also recall here that the centre $Z(\g)$ of $U(\g)$ maps into $U(\g,e)$ via the inclusion
$Z(\g) \subseteq U(\g)$.  Moreover, it is known that this defines an isomorphism
between the $Z(\g)$ and the centre of $U(\g,e)$, see the footnote to \cite[Question 5.1]{Pr2}.
We use this isomorphism to identify the centre of $U(\g,e)$ with $Z(\g)$, which in particular
allows us to define integral central characters for $U(\g,e)$-modules.

\begin{Remark}
We do not really require the definition of the finite $W$-algebra in this paper, but include
it for completeness.  Also we note here that there are different equivalent definitions
of the finite $W$-algebra in the literature.  Above we have given the Whittaker model
definition, as it is the shortest and most convenient for our purposes here.
\end{Remark}

\subsection{Skryabin's equivalence and Losev's map of primitive ideals} \label{SS:Losev}
The left $U(\g)$-module $Q_\chi$ is also a right $U(\g,e)$-module,
so there is a functor
\[
\cS : U(\g,e)\text{-mod} \to U(\g)\text{-mod}, \quad
M \mapsto Q_\chi \otimes_{U(\g,e)} M
\]
where $M$ is a $U(\g,e)$-module.
In \cite{Sk} Skryabin shows that
$\cS$ is a equivalence of categories between
$U(\g,e)$-mod and the category of {\em Whittaker modules for $e$}, i.e.\ the category of $U(\g)$-modules on which
$\m_\chi$ acts locally nilpotently.

For an algebra $A$ let $\Prim A$ denote the set of primitive ideals of $A$.
In \cite{Lo2} Losev shows that there exists a map
\[
 \cdot ^\dagger : \Prim U (\g, e) \to \Prim U (\g),\quad I \mapsto I^\dagger
\]
with the following properties:
\begin{enumerate}
 \item
 $\cdot^\dagger$ preserves central characters, i.e.\ $I \cap Z(\g) = I^\dagger \cap Z(\g)$
 for any $I \in \Prim(U(\g,e)$, under the identification of the centre of $U(\g,e)$ with $Z(\g)$.
 \item
   $\cdot^\dagger$ behaves well with respect to Skryabin's equivalence in the sense that
\[
  \Ann_{U(\g)} \cS(M) = (\Ann_{U(\g,e)} M)^\dagger
\]
for every irreducible $U(\g,e)$-module $M$;
  \item
    the restriction of $\cdot ^\dagger$ to $\Prim_0 U(\g,e)$,
     the set of primitive ideals of $U(\g,e)$ of finite co-dimension,
      is a surjection onto $\Prim_e U(\g)$, the set of primitive ideals
      of $U(\g)$ with associated variety equal to $\overline {G \cdot e}$.
\item
the fibres of $\cdot^\dagger$ restricted to $\Prim_0 U(\g,e)$ are $C$-orbits, where $C$ is the component group of the
centralizer of $e$.  See, for example the introduction to \cite{Lo2} for an explanation of the action of $C$
on $\Prim_0 U(\g,e)$.
\end{enumerate}

\subsection{Highest weight theory and Losev's map} \label{ss:hw}
By using the highest weight theory for finite $W$-algebras developed by
Brundan, Kleshchev and the second author in \cite{BGK},
the map $\cdot ^\dagger$ from the previous subsection can be explicitly calculated
in terms of highest weight modules for $U(\g,e)$ and $U(\g)$.

The key part of this highest weight theory is the use of a minimal Levi
subalgebra $\g_0$ which contains $e$.  In \cite[Theorem 4.3]{BGK}
it is proved that there is a certain subquotient of $U(\g,e)$, which is isomorphic
to $U(\g_0,e)$.  Then in \cite[\S4.2]{BGK} it is explained how a choice of a parabolic subalgebra
$\q$ with Levi factor $\g_0$ leads
to a highest weight theory for $U(\g,e)$, in which $U(\g_0,e)$ plays the role of the Cartan subalgebra in
the usual highest weight theory for reductive Lie algebras.  This leads to a definition
of Verma modules for $U(\g,e)$ by ``parabolically inducing'' $U(\g_0,e)$-modules up to $U(\g,e)$-modules.
Then \cite[Theorem 4.5]{BGK} says that these Verma modules have irreducible heads, and that any finite dimensional
irreducible $U(\g,e)$-module is isomorphic to one of these irreducible heads.  This gives
a method to explicitly parameterize finite dimensional irreducible $U(\g,e)$-modules, though
a classification of $U(\g_0,e)$-modules in general is unknown at present.

When $e$ is of standard Levi type, the classification of $U(\g_0,e)$-modules is known.
By a theorem of Kostant in \cite{Ko} and the Harish-Chandra isomorphism, we have that
$U(\g_0,e) \cong Z(\g_0) \cong S(\t)^{W_0}$,
where $\t$ is a maximal toral subalgebra of $\g$ and $W_0$ is the Weyl group of $\g_0$.
Hence the finite
dimensional irreducible $U(\g_0,e)$-modules are all one dimensional, and they are parameterized by
the $W_0$-orbits on $\t^*$.
We choose $\t$ as specified in \cite[S5.1]{BGK},
and let $\Lambda \in \t^*/W_0$ be a $W_0$-orbit.
In \cite[\S5.1]{BGK} an explicit isomorphism $U(\g_0,e) \to S(\t)^{W_0}$ is given.
Using this isomorphism and our choice of $\q$ we let $M(\Lambda,\q)$ denote the Verma module for $U(\g,e)$
induced from $\Lambda$, and we write $L(\Lambda,\q)$ for the irreducible head of $M(\Lambda,\q)$.  We
note that there are ``shifts'' involved in the isomorphisms above and thus in the definition of $M(\Lambda,\q)$
as defined in \cite[Sections 4 and 5]{BGK}.

Let $\u$ be the nilradical of $\q$,
and let $\b_0$ be a Borel subalgebra of $\g_0$ which contains $\t$ so that $\b = \b_0 \oplus \u$ is a Borel subalgebra of
$\g$.
For $\lambda \in \t^*$
let $L(\lambda,\b)$ denote the highest weight irreducible $\g$-module defined in terms of $\b$, with highest weight
$\lambda - \rho$ (where $\rho$ is the half-sum of the positive roots for $\b$).

The theorem below allows us to explicitly calculate Losev's map
$\cdot ^\dagger$ on primitive ideals in terms of highest weight modules.
In \cite[\S5.1]{BGK}
it is shown that this theorem
follows from \cite[Theorem 5.1]{MS} and \cite[Conjecture 5.3]{BGK}.
Also \cite[Conjecture 5.3]{BGK}
was verified in \cite[Theorem 5.1.1]{Lo3},
except for a technical point which was resolved in \cite[Proposition 3.10]{BG1}.

\begin{Theorem} \label{T:BGKt1}
    Let $\Lambda \in \t^*/W_0$ and let $\lambda \in \Lambda$ be antidominant for $\b_0$.
    Then
   $$
   (\Ann_{U(\g,e)} L(\Lambda,\q))^\dagger  = \Ann_{U(\g)} L(\lambda,\b).
   $$
\end{Theorem}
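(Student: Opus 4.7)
The plan is to reduce the theorem to the stated citations by using Skryabin's equivalence to transfer the question from $U(\g,e)$ to $U(\g)$, and then recognise the resulting Whittaker-module annihilator as the annihilator of a highest weight module via the Milicic--Soergel theorem.

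First, property (2) of Losev's map recalled in \S\ref{SS:Losev} gives at once
$$
  (\Ann_{U(\g,e)} L(\Lambda,\q))^\dagger = \Ann_{U(\g)} \cS(L(\Lambda,\q)),
$$
so the task reduces to showing $\Ann_{U(\g)} \cS(L(\Lambda,\q)) = \Ann_{U(\g)} L(\lambda,\b)$. To do this I would first identify $\cS(L(\Lambda,\q))$ explicitly. The Verma module $M(\Lambda,\q)$ is built by parabolically inducing from $U(\g_0,e) \cong S(\t)^{W_0}$ using $\q$, and under Skryabin's equivalence this construction should correspond to the parabolic (Whittaker) induction on the $U(\g)$-side: applying $\cS$ to $M(\Lambda,\q)$ yields a standard Whittaker module attached to the parabolic with Levi $\g_0$ and inducing datum $\lambda$. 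Passing to heads and using that $\cS$ is exact and fully faithful, $\cS(L(\Lambda,\q))$ is the unique irreducible Whittaker quotient associated to $\lambda$.

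This identification is exactly the content of \cite[Conjecture 5.3]{BGK}, verified in \cite[Theorem 5.1.1]{Lo3} with the remaining technical point supplied by \cite[Proposition 3.10]{BG1}. With the identification in hand, I would invoke \cite[Theorem 5.1]{MS}: under the Milicic--Soergel equivalence between the category of Whittaker modules with a fixed central character and a block of category $\mathcal{O}$, the annihilator of the simple Whittaker module corresponding to $\lambda$ agrees with the annihilator of the simple highest weight module associated to the same $\lambda$. The hypothesis that $\lambda$ is antidominant for $\b_0$ is precisely what pins down $L(\lambda,\b)$ as the correct highest weight module on the $U(\g)$-side of the equivalence (an arbitrary $W_0$-translate would not in general be simple in the relevant parabolic category). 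Combining these steps yields the displayed equality.

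The main obstacle is Step~2: verifying that Skryabin's functor genuinely intertwines the two parabolic induction procedures, with all Harish-Chandra-type and $\rho$-shifts accounted for. The isomorphism $U(\g_0,e) \cong S(\t)^{W_0}$ used in \cite[\S5.1]{BGK} carries a non-trivial shift, and the Verma module $M(\Lambda,\q)$ is defined with a further shift absorbed into the parameterisation; one must confirm that, after these shifts, the parameter $\lambda \in \Lambda$ appearing on the $U(\g,e)$-side is literally the same $\lambda$ whose $\b$-antidominance controls $L(\lambda,\b)$. Once this compatibility is pinned down, the Milicic--Soergel theorem and the Losev/BG1 resolution of \cite[Conjecture 5.3]{BGK} do the rest, and one obtains the theorem as a clean consequence of results already available in the literature.
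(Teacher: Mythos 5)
Your proposal reconstructs exactly the chain of reasoning the paper invokes: it cites no proof of its own but explains, just before the theorem statement, that the result follows from \cite[Theorem 5.1]{MS} together with \cite[Conjecture 5.3]{BGK} (proved in \cite[Theorem 5.1.1]{Lo3} modulo the technical point in \cite[Proposition 3.10]{BG1}), combined with Losev's property (2) from \S\ref{SS:Losev}. Your elaboration of what each citation contributes — Losev's equivariance of $\cdot^\dagger$ with Skryabin's functor, the identification of $\cS(L(\Lambda,\q))$ as the simple Whittaker module via the verified Conjecture 5.3, and the Mili\v{c}i\'c--Soergel matching of annihilators with $\b_0$-antidominance selecting the correct highest weight — matches the intended logic, so this is essentially the same argument as the paper's.
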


One consequence of this theorem is that if $e$ is not a special nilpotent element then $U(\g,e)$ has no finite dimensional irreducible representations of
integral central character.  This is due to results of Barbasch and Vogan in \cite{BV1} and \cite{BV2}, which imply that the associated variety of
$\Ann_{U(\g)} L(\lambda,\b)$ is a special nilpotent orbit if and only if $\lambda$ is integral.

The following theorem is \cite[Conjecture 5.2]{BGK}, which, as is explained in \cite[\S5]{BGK}, follows from
\cite[Conjecture 5.3]{BGK}.

\begin{Theorem} \label{T:BGKt2}
Let $\Lambda \in \t^*/W_0$ and let $\lambda \in \Lambda$ be antidominant for $\b_0$.
Then $L(\Lambda,\q)$ is a finite dimensional if and only if the associated variety of $\Ann_{U(\g)} L(\lambda,\b)$ is equal to $\overline{G \cdot e}$.
\end{Theorem}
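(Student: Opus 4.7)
The plan is to deduce the equivalence directly from the properties of Losev's map $\cdot^\dagger$ recorded in \S\ref{SS:Losev}, combined with the highest weight identification supplied by Theorem \ref{T:BGKt1}. The basic reduction I will use is that an irreducible $U(\g,e)$-module is finite dimensional if and only if its annihilator lies in $\Prim_0 U(\g,e)$, since in that case $U(\g,e)/\Ann$ is a finite dimensional algebra admitting a faithful simple module.

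For the forward direction, suppose $L(\Lambda,\q)$ is finite dimensional. Then $I := \Ann_{U(\g,e)} L(\Lambda,\q)$ lies in $\Prim_0 U(\g,e)$, so property (3) of Losev's map puts $I^\dagger$ in $\Prim_e U(\g)$, i.e.\ the associated variety of $I^\dagger$ equals $\overline{G \cdot e}$. Theorem \ref{T:BGKt1} identifies $I^\dagger$ with $\Ann_{U(\g)} L(\lambda,\b)$, which gives the conclusion.

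For the backward direction, the hypothesis says $\Ann_{U(\g)} L(\lambda,\b) \in \Prim_e U(\g)$. Property (3) then supplies some $J \in \Prim_0 U(\g,e)$ with $J^\dagger = \Ann_{U(\g)} L(\lambda,\b)$, and by the basic reduction $J$ is the annihilator of a finite dimensional irreducible $U(\g,e)$-module $N$. Applying Theorem \ref{T:BGKt1} on the other side gives $(\Ann_{U(\g,e)} L(\Lambda,\q))^\dagger = \Ann_{U(\g)} L(\lambda,\b) = J^\dagger$, so $\Ann L(\Lambda,\q)$ and $J$ sit in the same fibre of $\cdot^\dagger$.

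The main obstacle is to promote this common fibre statement to the conclusion that $L(\Lambda,\q)$ itself is finite dimensional; one needs to rule out that $\Ann L(\Lambda,\q)$ is a primitive ideal of infinite codimension that happens to map into $\Prim_e U(\g)$. The plan is to invoke the sharper form of Losev's results, namely that the full preimage of $\Prim_e U(\g)$ under $\cdot^\dagger$ is contained in $\Prim_0 U(\g,e)$, so by property (4) the fibre through $J$ is a $C$-orbit lying entirely inside $\Prim_0 U(\g,e)$; in particular $\Ann L(\Lambda,\q) \in \Prim_0 U(\g,e)$, and hence $L(\Lambda,\q)$ is finite dimensional. Equivalently, one can argue that the $C$-action on irreducible $U(\g,e)$-modules preserves dimension and intertwines with the $C$-action on primitive ideals, so $L(\Lambda,\q)$ is isomorphic to a $C$-conjugate of $N$ and is therefore finite dimensional.
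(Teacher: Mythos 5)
Your derivation is essentially the route that the paper is pointing to: the paper does not prove Theorem~\ref{T:BGKt2} directly, but cites it as \cite[Conjecture 5.2]{BGK}, observing that it follows from \cite[Conjecture 5.3]{BGK} (which, via \cite[Theorem 5.1.1]{Lo3} and \cite[Proposition 3.10]{BG1}, gives Theorem~\ref{T:BGKt1}) together with properties of Losev's $\cdot^\dagger$. Your forward direction is clean and fully supported by the properties listed in \S\ref{SS:Losev}. You also correctly locate the genuine subtlety in the backward direction: knowing that $\Ann_{U(\g,e)}L(\Lambda,\q)$ and $J$ lie in the same fibre of $\cdot^\dagger$ is not, from the literal list (1)--(4) alone, enough to conclude that $\Ann_{U(\g,e)}L(\Lambda,\q)$ has finite codimension, because (3) asserts only surjectivity of the restriction of $\cdot^\dagger$ to $\Prim_0 U(\g,e)$ and (4) only describes fibres of that restricted map.

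The additional input you name --- that the \emph{full} preimage under $\cdot^\dagger$ of $\Prim_e U(\g)$ lies in $\Prim_0 U(\g,e)$ --- is indeed a theorem of Losev: $\cdot^\dagger$ sends a primitive ideal $I$ of $U(\g,e)$ to a primitive ideal of $U(\g)$ whose associated variety is the $G$-saturation of the associated variety of $U(\g,e)/I$ in the Slodowy slice, so the latter is $\{e\}$ (equivalently, $I$ has finite codimension) exactly when the former is $\overline{G\cdot e}$. So with that citation made explicit, your backward direction is correct and closes the gap.

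One small caveat: your ``equivalently'' reformulation via the $C$-action is not actually an independent alternative. To place $L(\Lambda,\q)$ in the $C$-orbit of $N$ you already need to know that $\Ann_{U(\g,e)}L(\Lambda,\q)$ lies in $\Prim_0 U(\g,e)$, since property (4) only identifies the intersection of a fibre with $\Prim_0 U(\g,e)$ with a $C$-orbit. So this second phrasing secretly relies on the same sharper form; it is not a way around it. The first route you give is the right one, and with the stronger statement of Losev's theorem invoked explicitly, the proof is complete and agrees in substance with the argument the paper defers to \cite[\S5]{BGK}.
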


\section{Combinatorics of s-tables and finite $W$-algebras}

\subsection{Realizations of $\so_{2n+1}$ and $\sp_{2n}$}
\label{SS:realize}
In the case $\g = \so_{2n+1}$, we realize $\g$ in the following way.
Let $V = \C^{2n+1}$ have basis $\{e_1, \dots, e_n, e_0, e_{-n}, \dots, e_{-1}\}$.
Then we take $\gl_{2n+1} = \End(V)$ as having basis $\{e_{i,j} \mid i, j = 0, \pm 1, \dots, \pm n \}$.
We define the bilinear form $(\,\cdot,\cdot)$ on $V$ by declaring that $(e_i, e_j) = \delta_{i,-j}$.
Then we set
$$
\g = \so_{2n+1} = \{x \in \gl_{2n+1} \mid (xv,w) = -(v,xw) \text{ for all } v,w \in V\}.
$$
Note that $\g$ has basis $\{f_{i,j} \mid i, j = 0, \pm 1, \dots, \pm n,\, i +j > 0\}$, where
$f_{i,j} = e_{i,j} - e_{-j,-i}$.
We choose $\t = \{f_{i,i} \mid i = 1, \dots, n\}$ as a maximal toral subalgebra, so that
$\t^*$ has basis $\{\eps_i \mid i = 1, \dots, n\}$ where $\eps_i \in \t^*$ is defined via $\eps_i(f_{j,j}) = \delta_{i,j}$ for $i,j > 0$.  We write $\Phi$ for the root system of $\g$ with respect to $\t$.
Let $\b = \langle f_{i,j} \mid i \leq j \rangle$ be the Borel subalgebra of upper triangular matrices in $\g$.
Then the corresponding system of positive roots is given by
$\Phi^+ = \{\eps_i \pm \eps_j \mid 1 \leq i < j \leq n\} \cup \{\eps_i \mid i =1, \dots, n\}$.

For $\g = \sp_{2n}$, we
let $V = \C^{2n}$ have basis $\{e_1, \dots, e_n, e_{-n}, \dots, e_{-1}\}$.
Then we realize $\gl_{2n} = \End(V)$ as having basis $\{e_{i,j} \mid i, j = \pm 1, \dots, \pm n \}$.
We define the bilinear form $(\,\cdot,\cdot)$ on $V$ by declaring that $(e_i, e_j) = \sign(i) \delta_{i,-j}$ and set
$$
\g = \sp_{2n} = \{x \in \gl_{2n} \mid (xv,w) = -(v,xw) \text{ for all } v,w \in V\}.
$$
Then $\g$ has basis $\{f_{i,j} \mid i, j = \pm 1, \dots, \pm n,\, i +j \geq 0\}$, where
$f_{i,j} = e_{i,j} - \sign(i) \sign(j) e_{-j,-i}$.
We choose $\t = \{f_{i,i} \mid i = 1, \dots, n\}$ as a maximal toral subalgebra, so that
$\t^*$ has basis $\{\eps_i \mid i = 1, \dots, n\}$ where $\eps_i \in \t^*$ is defined via $\eps_i(f_{j,j}) = \delta_{i,j}$ for $i,j > 0$.  We write $\Phi$ for the root system of $\g$ with respect to $\t$.
We choose the Borel subalgebra $\b = \langle f_{i,j} \mid i \leq j \rangle$ of upper triangular
matrices in $\g$.
Then the corresponding system of positive roots is given by
$\Phi^+ = \{\eps_i \pm \eps_j \mid 1 \leq i < j \leq n\} \cup \{2 \eps_i \mid i =1, \dots, n\}$.

\subsection{Standard Levi nilpotent elements and symmetric pyramids} \label{SS:symmp}
Recall from the introduction that we are considering nilpotent orbits in $\g$, which
are special and standard Levi, but not even multiplicity.
The Jordan type for such a nilpotent orbit is of the form
\begin{equation} \label{e:bp}
\bp = (p_1^2 \le \dots \le p_{d-1}^2 < p_0 \le p_d^2 \le \dots \le p_r^2).
\end{equation}
Moreover, $p_i$ must be odd for all $i \ge d$ when $\g = \so_{2n+1}$,
or $p_i$ must be even for all $i < d$ when $\g = \sp_{2n}$.

Also recall, from the introduction, the definition of the symmetric pyramid $P$ for $\bp$.
We form a diagram $K$ called the {\em coordinate pyramid for $\bp$} by filling the boxes of $P$
with
$1, \dots, n, -n, \dots, -1$ if $\g = \sp_{2n}$, or with
$1, \dots, n, 0, -n, \dots, -1$ if $\g = \so_{2n}$,
across rows from top to bottom.
For example, for $\g = \sp_{18}$ and  $\bp = (5^2,4,2^2)$, we have
\[
K =
\begin{array}{c}
\begin{picture}(100,100)
   \put(30,0){\line(1,0){40}}
   \put(0,20){\line(1,0){100}}
   \put(0,40){\line(1,0){100}}
   \put(0,60){\line(1,0){100}}
   \put(0,80){\line(1,0){100}}
   \put(30,100){\line(1,0){40}}

    \put(30,0){\line(0,1){20}}
    \put(50,0){\line(0,1){20}}
    \put(70,0){\line(0,1){20}}

   \put(0,20){\line(0,1){20}}
   \put(20,20){\line(0,1){20}}
   \put(40,20){\line(0,1){20}}
   \put(60,20){\line(0,1){20}}
   \put(80,20){\line(0,1){20}}
   \put(100,20){\line(0,1){20}}

   \put(10,40){\line(0,1){20}}
   \put(30,40){\line(0,1){20}}
   \put(50,40){\line(0,1){20}}
   \put(70,40){\line(0,1){20}}
   \put(90,40){\line(0,1){20}}

   \put(0,60){\line(0,1){20}}
   \put(20,60){\line(0,1){20}}
   \put(40,60){\line(0,1){20}}
   \put(60,60){\line(0,1){20}}
   \put(80,60){\line(0,1){20}}
   \put(100,60){\line(0,1){20}}

    \put(30,80){\line(0,1){20}}
    \put(50,80){\line(0,1){20}}
    \put(70,80){\line(0,1){20}}

     \put(38,10){\makebox(0,0){{-2}}}
     \put(58,10){\makebox(0,0){{-1}}}

     \put(8,30){\makebox(0,0){{-7}}}
     \put(28,30){\makebox(0,0){{-6}}}
     \put(48,30){\makebox(0,0){{-5}}}
     \put(68,30){\makebox(0,0){{-4}}}
     \put(88,30){\makebox(0,0){{-3}}}

     \put(20,50){\makebox(0,0){{8}}}
     \put(40,50){\makebox(0,0){{9}}}
     \put(58,50){\makebox(0,0){{-9}}}
     \put(78,50){\makebox(0,0){{-8}}}

     \put(10,70){\makebox(0,0){{3}}}
     \put(30,70){\makebox(0,0){{4}}}
     \put(50,70){\makebox(0,0){{5}}}
     \put(70,70){\makebox(0,0){{6}}}
     \put(90,70){\makebox(0,0){{7}}}

     \put(40,90){\makebox(0,0){{1}}}
     \put(60,90){\makebox(0,0){{2}}}
\end{picture}
\end{array}.
\]
We let $\col(i)$ denote the $x$-coordinate of the centre of the box of $K$ which contains $i$.
However,
we use $\row(i)$ to denote the row of $K$ which contains $i$ when we label the rows of $K$ by
$1,\dots,r,0,-r,\dots,-1$,
from top to bottom; so that $p_i$ is the length of row $i$.

We define $e \in \g $ by
\begin{equation} \label{EQ:e}
e = \sum_{i,j}
f_{i,j},
\end{equation}
where the sum is over all adjacent pairs
$\Diagram{ $\scriptstyle i$ & $\scriptstyle j$ \cr}$ in $K$, so that
$e$ is in the nilpotent $G$-orbit with Jordan type $\bp$.

We also use $K$ to conveniently define many of the objects required for the
definition of $U(\g,e)$ and the highest weight theory.

Let $h = \sum_{i=1}^n - \col(i) f_{i,i}$,
then $(e,h,f)$ is an $\sl_2$-triple for some $f \in \g$.
Furthermore
the grading from \eqref{EQ:grading} on $\g$ is given by
$\g(k) =
\langle f_{i,j} \mid \col(j) - \col(i) = k \rangle$.  Then we have that
$\m = \langle f_{i,j} \mid \col(i) > \col(j) \rangle$,
and we use these to form the finite $W$-algebra $U(\g,e)$ as in \S\ref{SS:def}.

We take
$\g_0 = \langle f_{i,j} \mid \row(i) = \row(j) \rangle$.
So $\g_0$ is a minimal Levi subalgebra which contains $e$, and $e$ is a regular nilpotent element of $\g_0$.
In the case $\g = \so_{2n+1}$, we have
$$
\g_0 \cong \so_{p_0} \oplus
\bigoplus_{i=1}^r
\gl_{p_i};
$$
and in the case $\g = \sp_{2n}$, we have
$$
\g_0 \cong \sp_{p_0} \oplus
\bigoplus_{i=1}^r
\gl_{p_i}.
$$
We choose $\q = \langle f_{i,j} \mid \text{the row containing $i$ is above or equal to the row containing $j$} \rangle$.
Then $\q$ is a parabolic subalgebra of $\g$ with Levi factor $\g_0$.
Let $\b_0 = \b \cap \g_0$, so that
$\b_0$ is a Borel subalgebra of $\g_0$ which satisfies
$\b = \b_0 \oplus \mathfrak{u}$, where $\mathfrak{u}$ is the nilradical of $\q$.

\subsection{Tables and s-tables} \label{SS:stables}
We use the definitions and notation regarding frames, tables, s-frames
and s-tables from \cite[\S4]{BG1}.  Below we explain how these are used
to label highest weight modules for $U(\g,e)$.

For this purpose we let
$W_r$ be the Weyl group of type $\mathrm B_r$, which acts
on $\{0,\pm 1,\dots.\pm r\}$ in the natural way.  We write
$\bar s_i = (i, i+1)(-i, -i-1)$ for $i=1,\dots,r-1$ for the
standard generators of $W_r$.  Let $\bar S_r$ be the subgroup of $W_r$
generated by $\bar s_i$ for $i=1,\dots,r-1$.

Given $\sigma \in W_r$, we define $\sigma \cdot P$ to be the diagram obtained
from $P$ by permuting rows according to $\sigma$, so that $\sigma \cdot P$ is an $s$-frame.
We recall that by an {\em s-table with frame $\sigma \cdot P$} we mean a skew symmetric
(with respect to the origin) filling of $\sigma \cdot P$ with complex numbers.
Then we define $\sTab(\sigma \cdot P)$ to be the set of s-tables with frame $\sigma \cdot P$  such
that: all entries are integers if $\g = \sp_{2n}$; and either all entries are in $\Z$ or all entries are in
$\frac{1}{2} + \Z$ if $\g = \so_{2n+1}$.

We let $\sigma \cdot K$ be the s-table obtained from $K$ by permuting
rows according to $\sigma$.  Now given $A \in \sTab(\sigma \cdot P)$
we define $\lambda_A = \sum_{i=1}^n a_i \eps_i$ where
$a_i$ is the entry of $A$ in the same box as $i$ in $\sigma \cdot K$.  In this way we get an identification of
$\sTab(\sigma \cdot P)$ with the set of integral weights in $\t^*$; we write $\t^*_\Z$ for the set of integral
weights of $\t$.

The {\em row equivalence class} of an s-table is the set of s-tables which can be created by permuting entries within rows.  We let $\sRow( \sigma \cdot P)$ denote the set of row equivalence classes of $\sTab(\sigma \cdot P)$.
Then $\sRow(\sigma \cdot P)$ identifies naturally with $\t^*_\Z/W_0$, where $W_0$ is the Weyl group of $\g_0$.
Let $\sTab^\le(\sigma \cdot P)$ denote the elements of  $\sTab(\sigma \cdot P)$ which have non-decreasing rows.
Then every element of that $\sRow(\sigma \cdot P)$ contains a unique element of $\sTab^\le(\sigma \cdot P)$.

We label the rows
of $\sigma \cdot K$ with $1,\dots,r,0,-r,\dots,-1$ from top to bottom.
Now we define $\q_\sigma$ to be generated by the by $f_{ij}$ for which the row of $\sigma \cdot K$ in which $i$ appears is
above of equal to the row containing $j$.
Then $\q_\sigma$ is parabolic subalgebra of $\g$ with Levi factor $\g_0$,
so we can use it to define the irreducible highest weight modules $L(\Lambda,\q_\sigma)$, for
$\Lambda \in \t^*/W_0$  as defined in \S\ref{ss:hw}.

Given $\Lambda \in \t^*_\Z/W_0$, there is a unique $A \in \sTab^{\le}(\sigma \cdot P)$ whose row
equivalence class $\bar A \in \sRow(P)$ is identified with $\Lambda$ as above.
We let $L_\sigma(A)$ denote $L(\Lambda,\q_\sigma)$.

Let $\u_\sigma$ be the nilradical of $\q_\sigma$, and define $\b_\sigma =
\b_0 \oplus \u_\sigma$, which is a Borel subalgebra of $\g$.  We write
$L_\sigma(\lambda_A)$ for the irreducible highest weight $U(\g)$ module,
with respect to $\b_\sigma$,
with highest weight $\lambda_A - \rho_\sigma$, where $\rho_\sigma$
is the half sum of positive roots for $\b_\sigma$.

Now Theorem \ref{T:BGKt1} can be restated in our present notation as follows.

\begin{Theorem} \label{T:BGKt3}
   Let $\sigma \in W_r$ and $A \in \sTab^\leq(P)$.  Then
  $(\Ann_{U(\g,e)} L_\sigma(A))^\dagger = \Ann_{U(\g)} L_\sigma(\lambda_{A})$.
\end{Theorem}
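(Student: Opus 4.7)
The plan is to verify that Theorem \ref{T:BGKt3} is simply Theorem \ref{T:BGKt1} rewritten in the s-table notation of \S\ref{SS:stables}. Three matches must be checked: (i) $L_\sigma(A)$ corresponds to $L(\Lambda,\q_\sigma)$ for $\Lambda$ the $W_0$-orbit of $\lambda_A$; (ii) the hypothesis $A\in\sTab^{\le}(P)$ forces $\lambda_A$ to be antidominant for $\b_0$; and (iii) $L_\sigma(\lambda_A)$ coincides with $L(\lambda_A,\b_\sigma)$.

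First I would identify $A\in\sTab^{\le}(P)$ with its image in $\sTab^{\le}(\sigma\cdot P)$ under the bijection that permutes physical rows according to $\sigma$ while keeping the row labels $\{1,\dots,r,0,-r,\dots,-1\}$ fixed; this bijection preserves row equivalence classes in the obvious sense. Using the map of \S\ref{SS:stables}, $A$ then determines the weight $\lambda_A\in\t^*_\Z$, and its row class $\bar A\in\sRow(\sigma\cdot P)$ corresponds to the $W_0$-orbit $\Lambda$ of $\lambda_A$. By the very definition of $L_\sigma(A)$ given just before the theorem, we have $L_\sigma(A)=L(\Lambda,\q_\sigma)$, which settles (i). Item (iii) is equally immediate: $L_\sigma(\lambda_A)$ is by construction the irreducible highest weight $U(\g)$-module of highest weight $\lambda_A-\rho_\sigma$ relative to $\b_\sigma$, i.e.\ exactly the module denoted $L(\lambda_A,\b_\sigma)$ in the notation of \S\ref{ss:hw}.

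The only point requiring actual verification is (ii). Since the rows of $\sigma\cdot K$ consist of consecutive runs of labels from $\{\pm 1,\dots,\pm n\}$ (with $0$ included in the middle row when $\g=\so_{2n+1}$), the simple coroots of $\b_0=\b\cap\g_0$ correspond to adjacent pairs of labels within a single row of $\sigma\cdot K$. For the $\gl_{p_i}$-factors this means that non-decreasing entries along rows translate directly to $\langle\lambda_A,\alpha^{\vee}\rangle\le 0$ for every simple coroot of $\b_0$. For the central $\so_{p_0}$ or $\sp_{p_0}$ factor the same check is done using the realization of \S\ref{SS:realize} together with the skew-symmetry of $A$, which pins down the sign on the final simple coroot. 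Thus $A\in\sTab^{\le}(P)$ is equivalent to $\lambda_A$ being antidominant for $\b_0$.

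Having established (i)--(iii), an application of Theorem \ref{T:BGKt1} with $\q=\q_\sigma$, $\b=\b_\sigma$, $\Lambda$ the $W_0$-orbit of $\lambda_A$, and antidominant representative $\lambda=\lambda_A$ yields
\[
(\Ann_{U(\g,e)}L_\sigma(A))^\dagger=(\Ann_{U(\g,e)}L(\Lambda,\q_\sigma))^\dagger=\Ann_{U(\g)}L(\lambda_A,\b_\sigma)=\Ann_{U(\g)}L_\sigma(\lambda_A),
\]
which is the desired identity. The only mild subtlety in this otherwise formal translation is the antidominance check for the middle row, but this is forced by the combinatorics of symmetric pyramids and the explicit root data from \S\ref{SS:realize}.
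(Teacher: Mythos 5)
Your proposal is correct and matches the paper's (implicit) proof: the paper presents Theorem \ref{T:BGKt3} as a direct restatement of Theorem \ref{T:BGKt1} in the s-table language, and your three checks, notably the antidominance of $\lambda_A$ for $\b_0$ when $A$ has non-decreasing rows, are exactly the unpacking of that translation.
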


We are mainly interested in the case where $\sigma = 1$.
Here we have $\q_\sigma = \q$, and we
write $L(A)$ instead of $L_1(A)$ and
$L(\lambda_{A})$ instead of  $L_1(\lambda_A)$ for $A \in \sTab(P)$.

Thanks to Theorem \ref{T:BGKt3} our aim to classify the finite
dimensional irreducible $U(\g,e)$-modules
and understand the component group action on these modules
can be broken down to answering the following questions:
\begin{enumerate}
 \item
    For which $A \in \sTab^\leq(P)$ is the associated variety of
$\Ann_{U(\g)} L(\lambda_A)$ equal to $\overline{G \cdot e}$?
\item
   Given $A \in \sTab^\leq(P)$ such that $L(A)$ is finite dimensional, which $B \in \sTab^\leq(P)$ satisfy
$\Ann_{U(\g)} L(\lambda_A) = \Ann_{U(\g)} L(\lambda_B)$?
\end{enumerate}
In the case that $\bp$ has 3 parts we answer these two questions in Sections \ref{S:Ccase} and \ref{S:Bcase}.
The key ingredients in answering the first question are the Robinson--Schensted and Barbasch--Vogan algorithms explained in \S\ref{ss:RS}
and \S\ref{ss:BV}.
For the second question we use Vogan's $\tau$-equivalence on integral weights of $\g$, which is explained in \S\ref{ss:tau}.

In moving from the 3 row case to the general case, a key role is played by the different choices
of highest weight theories determined by the different parabolic subalgebras $\q_\sigma$ for $\sigma
\in W_r$.  This dependence follows easily from the results for the case of even multiplicity nilpotent
elements established in \cite{BG2}, which hold in the present
situation, the key result for us is Proposition~\ref{P:changehw}.  We also require the explicit description
of the action of the component group
on the set of finite dimensional irreducible $U(\g,e)$-modules in terms of s-tables, which is given in Proposition \ref{P:caction3}.
The proof of Theorem \ref{T:mainintro}
for the general case is then dealt with in \S\ref{ss:generalproof}.

\subsection{The component group} \label{ss:comp}

Recall that $C$ denotes the component group of the centralizer
of $e$ in $G$.  Here we take $G$ to be the adjoint group of $\g$, so
$G$ is either $\mathrm{SO}_{2n+1}$ or $\mathrm{PSp}_{2n}$.

A specific realization of $C$ is given as follows.
Let $0< p_{i_1} < \dots < p_{i_s}$ be the maximal distinct parts of $\bp$
such that $p_{i_j} \ne p_0$ and $p_{i_j}$ is
odd (respectively even) when $\g = \so_{2n+1}$ (respectively $\sp_{2n}$);
by maximal we mean that if $p_k = p_{i_j}$, then $k \le i_j$.
Define the matrices
$c_1,\dots,c_s$  corresponding to
$p_{i_1},\dots,p_{i_s}$
for $p_{i_k} \ne p_0$
by setting
\[
\dot c_k = \sum_{\substack{
         -n \le i,j \le n \\ \col(i) = \col(j) \\ \row(i) = i_k \\ \row(j) = -i_k}}
    \sign(\col(i)) (e_{i,j} + e_{j,i})
    + \sum_{\substack{ -n \le i \le n \\ \row(i) \neq \pm i_k}} e_{i,i}.
\]
Then one can calculate that $\dot c_k$ centralizes of $e$.
Furthermore the argument used in \cite[Section 6]{Bro2}
can be adapted to show that their images
$c_1, \dots, c_s$ in $C$ generate $C \iso \Z_2^s$.

As mentioned in \S\ref{SS:Losev} there is an action
of $C$ on $\Prim U(\g,e)$, and thus on isomorphism
classes of irreducible modules, and as explained in \cite[\S2.3]{BG1}
this can be seen as ``twisting'' modules by elements of $C$ (up to
isomorphism).
Given an irreducible $U(\g,e)$-module $L$ and $b \in C$, we write
$b \cdot L$ for the twisted module; we note that this is a minor abuse of notation
as $b \cdot L$ is only defined up to isomorphism.

\section{Some combinatorics for s-tables}

\subsection{The Robinson--Schensted Algorithm} \label{ss:RS}
We use the formulation of the
Robinson--Schensted algorithm from \cite[\S4]{BG1}.  We denote the
Robinson--Schensted algorithm by $\RS$ and recall that it
takes as input a word of integers (or more generally complex numbers) or a table
and outputs a tableau.

There are two lemmas about the Robinson--Schensted algorithm that we use repeatedly in the sequel
we state them below for convenience; they can be found in \cite[\S3]{Fu}.  For a word $w$, we define $\ell(w,k)$ to be the maximum possible sum of the
lengths of $k$ disjoint weakly increasing subsequences of $w$, and \textctc$(w,k)$ to be the maximum possible sum of the lengths of
$k$ disjoint strictly decreasing subsequences of $w$.  We write $\part(T)$ to denote the partition underlying a tableau $T$.

\begin{Lemma} \label{L:altRS}
Let $w$ be a word of integers and let $\bq = (q_1 \ge
\dots \ge q_n) = \part(\RS(w))$. Then for all $k \ge 1$,
$\ell(w,k) = q_1 + \dots + q_k$.
\end{Lemma}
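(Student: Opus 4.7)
The plan: this is a form of Greene's classical theorem on the Robinson--Schensted correspondence. I would prove it by reducing to a canonical representative of each Knuth equivalence class. The strategy has three ingredients: (i) the right-hand side $q_1 + \cdots + q_k$ depends only on $\RS(w)$ and hence only on the Knuth equivalence class of $w$; (ii) the left-hand side $\ell(w,k)$ is also invariant under the elementary Knuth moves; and (iii) on a canonical representative, such as the row reading word $w_{\text{row}}(T)$ of the insertion tableau $T = \RS(w)$, the equality can be checked directly.

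For step (i), invariance is immediate since Knuth-equivalent words have the same insertion tableau. For step (iii), if $T$ has rows of length $q_1 \ge q_2 \ge \cdots$ and we form $w_{\text{row}}(T)$ by reading the rows of $T$ from bottom to top, left to right, then each row of $T$ becomes a weakly increasing subsequence of $w_{\text{row}}(T)$, and these subsequences are disjoint. Taking the top $k$ rows yields a collection of total length $q_1 + \cdots + q_k$, establishing the lower bound $\ell(w_{\text{row}}(T), k) \ge q_1 + \cdots + q_k$. The matching upper bound on this canonical word requires a more delicate argument, which can be carried out by induction on $k$ or via Viennot's geometric construction of the correspondence, using the strictly increasing columns of $T$ to bound the subsequences.

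For step (ii), the elementary Knuth moves are $bac \leftrightarrow bca$ (for $a \le b < c$) and $acb \leftrightarrow cab$ (for $a \le b < c$). For each move, one shows that an optimal collection of $k$ disjoint weakly increasing subsequences on one side can be converted into a collection of equal total length on the other, by a case analysis based on which of the distinguished letters $a, b, c$ lies in which subsequence (including the case where some lie in none). Combined with (i), (ii), and (iii), and the fact that every word is Knuth equivalent to the row reading word of its insertion tableau, the statement follows.

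The main obstacle will be step (ii): checking Knuth-invariance of $\ell(w,k)$ is a genuinely combinatorial argument, as one must verify that no optimal configuration is destroyed by a local swap. The subtle cases arise when the moved letters sit at junctions of the optimal subsequences, and one must reroute the subsequences after the swap so as to preserve the total length without violating disjointness. Once this invariance is established, the reduction to the canonical case is routine.
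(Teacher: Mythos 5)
The paper does not prove this lemma; it is stated as a known result (Greene's theorem on the Robinson--Schensted correspondence) and cited to \cite[\S3]{Fu}. Your proposal to prove it from scratch via the standard three-step reduction---Knuth-invariance of both sides, then direct verification on the row reading word of the insertion tableau $T = \RS(w)$---has the correct overall structure, but you stop short of carrying out two of the three steps, and you overestimate the difficulty of one of them.

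In step (iii), the upper bound on the row reading word is not delicate and requires neither Viennot's construction nor an induction on $k$. With rows weakly increasing, columns strictly increasing downward, and the reading word taking rows from bottom to top (left to right within each row), the entries of any single column of $T$ appear in the reading word in \emph{strictly decreasing} order; consequently a weakly increasing subsequence of the reading word meets each column at most once. Therefore $k$ disjoint weakly increasing subsequences meet column $j$ in at most $\min(k, q^*_j)$ entries, where $q^*_j$ is the $j$-th column length, and $\sum_j \min(k, q^*_j) = q_1 + \cdots + q_k$, giving the upper bound in one line. Step (ii), the Knuth-invariance of $\ell(w,k)$, really is the entire content of Greene's theorem: you correctly identify it as the crux, but you leave the rerouting argument for the two elementary Knuth moves entirely as a promissory note. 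Without that case analysis this is a plan rather than a proof. Since the paper itself simply cites the result, citing Fulton or Greene is of course a legitimate, and much shorter, alternative.
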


\begin{Lemma} \label{L:altcRS}
Let $w$ be a word of integers and let $\bq^T = (q^*_1 \ge \dots \ge
q^*_n)$ be the dual partition to $\bq = \part(\RS(w))$. Then for all
$k \ge 1$, {\em \textctc}$(w,k) = q^*_1 + \dots + q^*_k$.
\end{Lemma}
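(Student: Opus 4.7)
I would prove this as the strictly-decreasing counterpart of Lemma \ref{L:altRS}, following the standard Greene's theorem template. The plan has two steps: a Knuth-equivalence reduction, followed by a direct computation on a canonical word.

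Step 1 (Knuth invariance): Show that \textctc$(\cdot,k)$ is constant on Knuth equivalence classes. This requires verifying that each elementary Knuth move preserves \textctc$(w,k)$ by a short exchange argument on an optimal family of $k$ disjoint strictly decreasing subsequences. Since row insertion is a composition of Knuth moves, $w$ is Knuth equivalent to the row-reading word $\word(U)$ of $U := \RS(w)$ (rows read from bottom to top, each row left-to-right), so it suffices to compute \textctc$(\word(U),k)$.

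Step 2 (Direct computation on $\word(U)$): For the lower bound, the $j$-th column of $U$ has length $q_j^*$ and strictly increases top-to-bottom; it appears in $\word(U)$ in reverse row order and so gives a strictly decreasing subsequence of length $q_j^*$. The first $k$ columns yield $k$ pairwise disjoint strictly decreasing subsequences of total length $q_1^* + \cdots + q_k^*$. For the matching upper bound, observe that each row of $U$ is weakly increasing, so any strictly decreasing subsequence of $\word(U)$ can include at most one entry from a given row of $U$. Hence $k$ disjoint strictly decreasing subsequences together include at most $\min(q_i, k)$ entries from the $i$-th row, and summing over $i$ gives $\sum_i \min(q_i, k) = q_1^* + \cdots + q_k^*$ by the standard identity counting boxes in the first $k$ columns of the shape $\bq$.

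The main obstacle is Step 1, the Knuth invariance of \textctc$(\cdot,k)$. In contrast to the weakly-increasing version behind Lemma \ref{L:altRS}, the strict inequality makes the exchange argument more delicate: when a Knuth move rearranges a triple inside an optimal decreasing subsequence, one must verify that the replacement does not introduce an unwanted equality. The required case analysis, together with the conjugate-partition manipulation used in Step 2, is carried out in \cite[\S3]{Fu}, which is the source cited for both lemmas.
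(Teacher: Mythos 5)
The paper gives no proof of this lemma: both Lemma \ref{L:altRS} and Lemma \ref{L:altcRS} are stated as known facts with a pointer to \cite[\S3]{Fu}, where they appear as the two halves of Greene's theorem. Your proposal (Knuth invariance of \textctc$(\cdot,k)$, followed by a direct count on the row word of $\RS(w)$, with the columns giving the lower bound and the one-entry-per-row constraint giving the matching upper bound via $\sum_i\min(q_i,k)=q_1^*+\cdots+q_k^*$) is precisely the argument carried out in that reference, and your Step 2 is correct, so the approach matches the paper's cited source.
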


An elementary fact about the Robinson--Schensted algorithm required later is stated in Lemma \ref{L:largersmaller} below; it
is easily deduced from
Lemma \ref{L:altRS}.
Suppose $u, w$ are words of integers and $a,b$ are integers such that
$a > b$, then we say the transposition of the word $u a b w$ to $u b a w$ is
a {\em larger-smaller transposition}.  Also we refer the reader to \cite[\S2]{Fu}
for the definition of Knuth equivalences.

\begin{Lemma} \label{L:largersmaller}
If $u$ and $w$ are words of integers and $w$ can be obtained from $u$
by a sequence of Knuth equivalences and larger-smaller transpositions then
$\part(\RS(u)) \leq \part(\RS(w))$.
\end{Lemma}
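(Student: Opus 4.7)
The plan is to reduce to a single-step argument: by induction on the number of Knuth equivalences and larger-smaller transpositions, it suffices to treat one step at a time. For a single Knuth equivalence, the insertion tableau $\RS$ itself is preserved (this is the defining property of Knuth equivalence), so the underlying partition is unchanged and the inequality is trivial with equality. The work is therefore to show that for a single larger-smaller transposition $uabw \to ubaw$ with $a > b$, we have $\part(\RS(uabw)) \leq \part(\RS(ubaw))$ in dominance order.

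By Lemma~\ref{L:altRS}, this reduces to proving $\ell(uabw, k) \leq \ell(ubaw, k)$ for every $k \geq 1$, since the dominance order on partitions is exactly the pointwise comparison of the partial sums $q_1 + \dots + q_k$. My approach is constructive: given $k$ pairwise-disjoint weakly increasing subsequences $S_1, \ldots, S_k$ of $uabw$ realizing $\ell(uabw, k)$, I will build $k$ such subsequences of $ubaw$ of the same total length. Let $i$ denote the position of $a$ in $uabw$, so that $b$ lies at position $i+1$; in $ubaw$ these positions are occupied by $b$ and $a$ respectively.

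The argument then splits according to which of positions $i$ and $i+1$ lie in $\bigcup_j S_j$. If neither does, the same position sets work verbatim in $ubaw$. If exactly one is used, one simply shifts that position by one slot so as to preserve the selected element, and this is consistent with weak increase because the neighbouring positions in the affected $S_j$ lie strictly outside $\{i, i+1\}$. The main case is when both positions are used: they must lie in distinct subsequences $S_j \ni a$ and $S_l \ni b$, for otherwise the pair $a,b$ with $a > b$ would violate weak increase. I would then swap positions $i$ and $i+1$ between $S_j$ and $S_l$, so that $a$ is recorded at its new position $i+1$ in $S_j$ and $b$ at its new position $i$ in $S_l$. Disjointness is then clear, and weak increase is preserved because the predecessor and successor positions in $S_j$ were already $<i$ and $>i+1$ (the slot $i+1$ belonging to $S_l$), and symmetrically for $S_l$; the multisets of selected elements, and their relative order, are unchanged.

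The argument is largely routine bookkeeping; I expect the only genuinely combinatorial point to be the exclusion of $a$ and $b$ from a common weakly increasing subsequence of $uabw$, which is precisely where the hypothesis $a>b$ enters.
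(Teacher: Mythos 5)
Your proof is correct and follows exactly the reduction through Lemma~\ref{L:altRS} that the paper has in mind (the paper merely remarks that the lemma ``is easily deduced from Lemma~\ref{L:altRS}'' without writing out the details). Your explicit construction --- shifting or swapping the positions $i$, $i+1$ among the subsequences $S_1,\dots,S_k$ so as to preserve the selected values, with the hypothesis $a>b$ used only to rule out $i$ and $i+1$ lying in the same weakly increasing subsequence --- is a correct and complete way of carrying out that deduction.
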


The following theorem extends \cite[Theorem 4.6]{BG1} and is
important for us later.  In the statement $P$ is the
symmetric pyramid for the partition $\bp$ as in the previous section.

\begin{Theorem} \label{T:recs}
Let $A,B \in \sTab^\leq(P)$.  Then:
\begin{enumerate}
\item[(i)]
$A$ is justified row equivalent to column
strict if and only if $\part(\RS(A)) = \bp$.
\item[(ii)]
If $\part(\RS(A)) = \bp$, then $\RS(A) = \RS(B)$ if and only if $A=B$.
\end{enumerate}
\end{Theorem}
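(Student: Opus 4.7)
The plan is to extend the proof of \cite[Theorem 4.6]{BG1}, which handles the even multiplicity case, to accommodate the extra middle row of length $p_0$. The main object of study is the left-justification of $A$, a filling whose row lengths (top to bottom) are $p_1, \dots, p_r, p_0, p_r, \dots, p_1$; these rearrange to form the partition $\bp$, and so the column lengths of the left-justification are exactly the parts of $\bp^T$.

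For the forward direction of (i), the rows of $A$ are non-decreasing, so the $k$ longest rows supply disjoint weakly increasing subsequences of the reading word with total length $\bp_1 + \dots + \bp_k$. Lemma \ref{L:altRS} then gives $\part(\RS(A)) \ge \bp$ in dominance order. Dually, if the left-justification admits a column-strict rearrangement, the $k$ longest columns give disjoint strictly decreasing subsequences of total length $\bp^T_1 + \dots + \bp^T_k$, and Lemma \ref{L:altcRS} forces $\part(\RS(A))^T \ge \bp^T$, that is, $\part(\RS(A)) \le \bp$. Combining yields $\part(\RS(A)) = \bp$.

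For the reverse direction of (i), the approach is induction on the number of columns of the left-justification. The crucial observation is that, since rows of $A$ are non-decreasing, any strictly decreasing subsequence of the reading word contains at most one entry per row; hence Lemma \ref{L:altcRS} together with $\bp^T_1 = 2r+1$ forces a strictly decreasing subsequence that selects exactly one entry from each row. Moving these chosen entries to the left of their rows yields a valid strictly decreasing first column; the remaining sub-filling has reading word whose RS shape equals $\bp$ with its first column of $\bp^T$ removed, so the induction proceeds, and iteration yields a column-strict rearrangement.

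For (ii), if $\RS(A) = \RS(B)$ with $\part(\RS(A)) = \bp$, then by (i) both $A$ and $B$ are row-equivalent to column-strict fillings $A'$ and $B'$ of the left-justified shape; the claim is that such a column-strict filling is recoverable from its RS tableau by reading off each column of $\RS(A)$ in decreasing order and iterating, so $A' = B'$, and since each element of $\sTab^\leq(P)$ is the unique non-decreasing representative of its row-equivalence class, we conclude $A = B$. The main obstacle lies in the reverse direction of (i): one must verify that greedy peeling of the leftmost column preserves the inductive hypothesis, and the novelty compared with \cite[Theorem 4.6]{BG1} is the single middle row of length $p_0$, which breaks the pairing symmetry of rows above and below it, so careful bookkeeping is required to confirm the remaining sub-filling still meets the hypothesis.
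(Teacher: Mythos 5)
Your forward direction of (i) is sound and genuinely different from the paper's route: the weakly increasing rows give $\part(\RS(A))\geq\bp$ via Lemma \ref{L:altRS}, and the strictly decreasing columns of a column-strict rearrangement give $\part(\RS(A))^T\geq\bp^T$ via Lemma \ref{L:altcRS} (a column picks at most one entry per row, so it is a subsequence of $\word(A)$ regardless of the within-row permutation). However, the reverse direction of (i) has a real gap. You assert that after choosing a strictly decreasing subsequence of length $2r+1$ (one entry per row), moving those entries to the fronts of their rows, and deleting them, the remaining sub-filling has RS shape equal to $\bp$ with the longest column of $\bp^T$ removed. This does not follow from Greene's theorem: deleting a longest strictly decreasing subsequence from a word does not in general drop the dual partition in the prescribed way, and the shape of the leftover word depends delicately on which such subsequence is chosen. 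You flag this yourself as ``the main obstacle'' but supply no argument, and that is precisely where the whole difficulty lies. Part (ii) has a parallel problem: the RS tableau of $A$ has (straight) shape $\bp$ whereas a column-strict rearrangement lives on the (skew, pyramid-shaped) left-justification, so there is no ready column-to-column dictionary to ``read off'' $A'$ from $\RS(A)$.

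The paper sidesteps these issues entirely by invoking the row-swap operations $s_i\star$ of \cite[\S4]{BG1}, which transform $A$ into an honest Young tableau while preserving the RS output; the only new point is to verify that the convexity hypotheses of \cite[Lemma 4.9]{BG1} still hold when the unpaired middle row is shorter than its neighbours. Since those row swaps are invertible, (ii) follows at once. That row-swap machinery is exactly the tool that justifies the kind of ``remaining sub-filling has the right shape'' claim your peeling argument needs, and your proposal is missing an analogue of it.
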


\begin{proof}
Part (i) can be proved in the same way as \cite[Theorem 4.6]{BG1}.
we just need to check the proof still holds
if $A$ has an odd number of rows and the middle row of $A$ is not $A$'s longest row.
The only thing to
check is that there is a sequence of row swaps which transform $A$ into a tableau, such that
the convexity conditions
required by \cite[Lemma 4.9]{BG1} are satisfied, which is clear.

To prove (ii), we simply note that each row swap from the sequence of row swaps from (i) which turns $A$ into a tableau
is invertible.
\end{proof}

Lastly in this section we give the following theorem, which is important later on.

\begin{Theorem} \label{T:recs2}
Let $A,B \in \sTab^\cc(P)$.
Suppose that $\Ann_{U(\g)} L(\lambda_A) = \Ann_{U(\g)} L(\lambda_B)$.
Then $A=B$.
\end{Theorem}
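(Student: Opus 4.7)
The plan is to reduce the hypothesis $\Ann_{U(\g)} L(\lambda_A) = \Ann_{U(\g)} L(\lambda_B)$ to the equality of Robinson--Schensted tableaux $\RS(A) = \RS(B)$, and then invoke Theorem \ref{T:recs}(ii) to conclude $A = B$.

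I would first reduce to the case $A, B \in \sTab^\cc(P) \cap \sTab^\le(P)$: each row equivalence class in $\sTab^\cc(P)$ contains a unique non-decreasing representative, and column-strictness is a row-equivalence invariant of the left-justification. At this point Theorem \ref{T:recs}(i) gives $\part(\RS(A)) = \part(\RS(B)) = \bp$, so once $\RS(A) = \RS(B)$ is established Theorem \ref{T:recs}(ii) immediately yields $A = B$.

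The central step is the translation ``equal annihilators $\Longrightarrow \RS(A) = \RS(B)$''. For this I would invoke Vogan's $\tau$-equivalence on integral weights (\S\ref{ss:tau}) together with the Barbasch--Vogan algorithm (\S\ref{ss:BV}). For integral $\lambda$ in types B and C, $\Ann_{U(\g)} L(\lambda)$ is determined by the $\tau$-equivalence class of $\lambda$, and this class is captured combinatorially by the output of the BV procedure. Applied to $\lambda = \lambda_A$ with $A \in \sTab^\cc(P) \cap \sTab^\le(P)$, the BV output should be a faithful encoding of the entire tableau $\RS(A)$ rather than merely its shape~$\bp$. If so, the equality of annihilators forces the equality of BV outputs, hence $\RS(A) = \RS(B)$.

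The main obstacle I anticipate is exactly this compatibility: matching the row-reading word of the symmetric pyramid $P$ to the standard BV/Garfinkle conventions in types B and C, so that the BV output on $\lambda_A$ recovers $\RS(A)$ in an invertible manner. The column-strictness hypothesis $A \in \sTab^\cc(P)$, which by Theorem \ref{T:recs}(i) is equivalent to $\part(\RS(A)) = \bp$, is what keeps the BV procedure in a regime where such a faithful recovery is available and prevents distinct s-tables from collapsing to the same primitive ideal. Once this combinatorial dictionary is pinned down the proof closes quickly via Theorem \ref{T:recs}(ii).
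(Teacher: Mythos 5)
Your overall architecture is the same as the paper's: use Theorem~\ref{T:recs}(i) to pin down the shape $\part(\RS(A)) = \part(\RS(B)) = \bp$, deduce $\RS(A) = \RS(B)$ from the equality of annihilators, and then finish with Theorem~\ref{T:recs}(ii). The problem is the middle step.

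You claim that ``the BV output should be a faithful encoding of the entire tableau $\RS(A)$ rather than merely its shape.'' This cannot be right. As defined in~\S\ref{ss:BV}, the Barbasch--Vogan algorithm returns a \emph{partition} (the Jordan type of the associated variety of $\Ann_{U(\g)}L(\lambda)$), computed from $\part(\RS(\lambda))$ alone. It literally forgets everything about $\RS(\lambda_A)$ except its shape, so equality of BV outputs does not yield $\RS(A)=\RS(B)$; it only recovers what Theorem~\ref{T:recs}(i) already gave you. Indeed, the whole content of Theorem~\ref{T:mainintro} is that many distinct primitive ideals share the associated variety $\overline{G\cdot e}$, so an invariant that is a function of $\overline{G\cdot e}$ cannot distinguish them. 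The correct combinatorial invariant is not the BV partition but Garfinkle's \emph{left domino tableau}: the paper invokes \cite[Theorem 3.5.11]{Ga3}, which says that $\tau$-equivalence of integral weights is governed by equality of the associated domino tableaux (after normalizing to special shape, which is automatic here because $\bp$ is special), together with \cite[Proposition 4.2.3]{Le}, which identifies Garfinkle's domino tableau with $\DT(\RS(w))$. That is the link from $\Ann_{U(\g)}L(\lambda_A)=\Ann_{U(\g)}L(\lambda_B)$ to $\RS(A)=\RS(B)$, and it is genuinely finer than anything the BV algorithm can see. Your phrasing ``BV/Garfinkle'' suggests you may be conflating the associated-variety algorithm with Garfinkle's primitive-ideal algorithm; they are different, and only the latter does the job.

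A secondary caveat: your preliminary reduction to $A,B\in\sTab^\cc(P)\cap\sTab^\leq(P)$ is not free, since $\lambda_A$ and $\lambda_{A'}$ for row-equivalent $A,A'$ are $W_0$-conjugate and $\Ann_{U(\g)}L(\,\cdot\,)$ is not in general $W_0$-invariant. The paper sidesteps this: $\RS(A)$ is already a row-equivalence invariant, so no such reduction is needed once the Garfinkle step is in place.
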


\begin{proof}
As $A$ and $B$ are justified row equivalent to column strict then by
Theorem \ref{T:recs} $\part(\RS(A)) = \part(\RS(B)) = \bp$.
Using \cite[Theorem 3.5.11]{Ga3} and \cite[Proposition 4.2.3]{Le}, and
that $\Ann_{U(\g)} L(\lambda_A) = \Ann_{U(\g)} L(\lambda_B)$
we able to deduce that $\RS(A) = \RS(B)$.  Now the statement
follows from Theorem \ref{T:recs}.
\end{proof}

\subsection{Row swapping} \label{ss:rowswap}

In the proof of Theorem~\ref{T:recs} above we have mentioned the row swapping operations $s_i \star$ on tables
as defined in \cite[\S4]{BG1} and \cite[\S4]{BG2}.
An important ingredient for the definition of these row swapping operations is the notion of best fitting as
defined in \cite[\S4]{BG1}, which we use repeatedly in the sequel.

We also require the operations $\bar s_i \star$ for s-tables and we use the notation from \cite[\S5]{BG2}.
Recall that for $\sigma \in W_r$ and an s-table $A \in \sTab^\le(\sigma \cdot P)$, either $\bar s_i \star A$ is undefined
or it is an element of $\sTab^\le(\bar s_i\sigma \cdot P)$.
These operations can be extended to operations by elements of $\bar S_r$; the proof of \cite[Proposition 5.5(i)]{BG2}
goes through in our situation to show that this is well defined.

The following proposition is a version of \cite[Proposition 5.3(ii)]{BG2} in the present setting
and its proof adapts immediately.

\begin{Proposition} \label{P:changehw}
Let $\sigma \in W_r$, $\tau \in \bar S_r$ and $A \in \sTab^\le(\sigma \cdot P)$
Suppose that $\tau \star A$ is defined.
Then $L_\sigma(A) \cong L_{\tau \sigma}(\tau \star A)$.
\end{Proposition}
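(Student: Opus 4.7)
The plan is to reduce to the case of a simple reflection and then adapt the argument of \cite[Proposition 5.3(ii)]{BG2}, which establishes the analogous statement for even multiplicity orbits. Write $\tau = \bar s_{i_1} \cdots \bar s_{i_k}$ as a product of simple reflections in $\bar S_r$. Since the $\star$-action of $\bar S_r$ on s-tables is well defined (as noted just before the statement, via the analogue of \cite[Proposition 5.5(i)]{BG2}), we have $\tau \star A = \bar s_{i_1} \star (\bar s_{i_2} \star \cdots \star (\bar s_{i_k} \star A))$, with each intermediate expression defined. By induction on $k$, it therefore suffices to prove the case $\tau = \bar s_i$ for a single simple reflection.

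For $\tau = \bar s_i$, the parabolic subalgebras $\q_\sigma$ and $\q_{\bar s_i \sigma}$ share the common Levi factor $\g_0$ and differ only in the relative placement of the symmetric pair of rows $i, i+1$ (and, symmetrically, rows $-i-1, -i$) in $\sigma \cdot P$. Because $\bar s_i \in \bar S_r$ fixes the index $0$, the middle row of length $p_0$ is left untouched by the swap. The row swap operation $\bar s_i \star$ is defined in \cite[\S5]{BG2} via the best-fitting procedure of \cite[\S4]{BG1} in precisely the way needed so that, after accounting for the $\rho$-shifts implicit in the definitions of $M(\Lambda, \q_\sigma)$ and $M(\Lambda', \q_{\bar s_i \sigma})$, the two parabolic Verma modules attached to $A$ and $\bar s_i \star A$ have isomorphic irreducible heads. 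Invoking the formal-character comparison carried out in the proof of \cite[Proposition 5.3(ii)]{BG2} then yields $L_\sigma(A) \cong L_{\bar s_i \sigma}(\bar s_i \star A)$.

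The only real work is to verify that each step of the proof of \cite[Proposition 5.3(ii)]{BG2} remains valid in the presence of the unpaired middle row. The main obstacle is bookkeeping rather than mathematical content: one must check that the best-fitting manipulations and character comparisons of \cite[\S4--5]{BG2} depend only on the local data of the pair of rows being swapped (together with their symmetric partners), and not on the global shape of the pyramid. Because $\bar s_i$ never swaps the middle row with anything else, that row appears only as an inert spectator in every one of these steps, so the verification should go through without modification. I would organise the write-up by explicitly pointing to the places in \cite{BG2} where the even multiplicity hypothesis is used, and verifying in each case that the argument only needs the weaker condition that the two rows being swapped (and their symmetric partners) have the appropriate lengths.
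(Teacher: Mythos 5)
Your proposal is correct and follows essentially the same route as the paper, which simply states that the proof of \cite[Proposition 5.3(ii)]{BG2} ``adapts immediately'' to the present setting. Your elaboration — reducing to a single simple reflection $\bar s_i$ and observing that the unpaired middle row is an inert spectator because $\bar s_i$ never moves it — is precisely the justification that the paper's terse remark leaves implicit.
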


Also we state the following lemma as it is key for the proof of Theorem \ref{T:mainintro}.
It is \cite[Lemma 5.11]{BG1}, adapted to our situation and the same proof holds.  In the statement
$A^1_r$ denotes the table formed by rows $1$ to $r$ of $A$.

\begin{Lemma} \label{L:Aplus}
For $A \in \sTab^\leq(P)$ suppose that $L(A)$ is finite dimensional, and let
$\tau \in \bar S_r$.
Then $A^1_r$ is justified row equivalent to column strict and $\tau \star A$ is defined.
\end{Lemma}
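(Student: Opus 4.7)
The plan is to imitate the proof of \cite[Lemma 5.11]{BG1}, adapting it to the present setting in which the partition $\bp$ has a distinguished odd-multiplicity middle part $p_0$. The argument splits naturally into the two conclusions, and the bridge between them is Theorem~\ref{T:recs} (more precisely its one-sided analog \cite[Theorem 4.6]{BG1}) which converts column-strictness into $\RS$-data.

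For the first claim, I would start from the hypothesis that $L(A)$ is finite dimensional. By Theorem~\ref{T:BGKt2}, the associated variety of $\Ann_{U(\g)} L(\lambda_A)$ equals $\overline{G \cdot e}$, and standard results on primitive ideals in classical types translate this into a constraint on the Robinson--Schensted data of $\word(A)$. Using Lemma~\ref{L:altRS} on the word of the top half $A^1_r$, one extracts that the sum of the lengths of the $k$ longest weakly increasing subsequences of $\word(A^1_r)$ is bounded above by $p_r + p_{r-1} + \cdots + p_{r-k+1}$ for every $k \le r$. Since the rows of $A^1_r$ are themselves weakly increasing and have lengths $p_1 \le \cdots \le p_r$, the reverse inequality is immediate, yielding $\part(\RS(A^1_r)) = (p_r,p_{r-1},\ldots,p_1)$. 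Then \cite[Theorem 4.6]{BG1} gives that $A^1_r$ is justified row equivalent to column strict.

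For the second claim, I would induct on the length of $\tau \in \bar S_r$ written as a product of the simple generators $\bar s_i$. Because iterated row swaps are well defined (the analog of \cite[Proposition 5.5(i)]{BG2} noted in \S\ref{ss:rowswap}), it suffices to check that at each stage $\bar s_i \star$ is defined. Each $\bar s_i$ simultaneously swaps rows $i,i+1$ in the top half and rows $-i,-i-1$ in the bottom half; since $A^1_r$ is justified row equivalent to column strict, the best-fitting conditions from \cite[\S4]{BG1} required to define the top-half swap are satisfied, and by skew-symmetry of the s-table the same holds for the paired swap in the bottom half. After the swap, the new top half is again justified row equivalent to column strict (its $\RS$-shape is unchanged), so the induction proceeds.

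The main obstacle will be the careful handling of the distinguished middle row of length $p_0$, which is fixed by $\bar S_r$ but participates in the symmetric pyramid frame. One needs to verify that the convexity condition of \cite[Lemma 4.9]{BG1}, which underlies best fitting, is not disrupted at the interface between the top half and the middle row even when $p_0$ is not the longest row of $P$. This is precisely the point already checked in the proof of Theorem~\ref{T:recs}, so invoking that verification should close the argument.
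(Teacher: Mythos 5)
Your argument for the second conclusion (definability of $\tau\star A$) is essentially right, granting the first: once $A^1_r$ is justified row equivalent to column strict, adjacent pairs of rows in the top half admit the best fitting needed to define $\bar s_i\star$, the skew-symmetry of the s-table handles the paired bottom-half swap for free, and since the row-swap operations of \cite[\S4]{BG1} preserve the row equivalence class and hence $\part(\RS)$ of the top half, the induction on $\ell(\tau)$ closes. The worry you flag about the middle row is also a non-issue for $\bar S_r$, which never touches row $0$.

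The gap is in your derivation of the first conclusion. You invoke Theorem \ref{T:BGKt2} and ``standard results on primitive ideals'' to get a constraint on the Robinson--Schensted data of $\word(A)$, and then assert that Lemma \ref{L:altRS} lets one ``extract'' the bound $\ell(\word(A^1_r),k)\le p_r+\cdots+p_{r-k+1}$. But the constraint obtained from Theorems \ref{T:BGKt2} and \ref{T:BV} lives on the whole word: it says that $\bq=\part(\RS(\word(A)))$ satisfies $\content(\bq)=\content(\bp)$ (together with the free inequality $\bq\ge\bp$ coming from increasing rows). Lemma \ref{L:altRS} applied to $\word(A^1_r)$ only gives $\ell(\word(A^1_r),k)\le\ell(\word(A),k)=q_1+\cdots+q_k$, and this is genuinely too weak: already for $k=2$ one has $q_1+q_2\ge 2p_r>p_r+p_{r-1}$ as soon as the top two parts of $\bp$ are unequal. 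What the actual proof must do — and what your sketch omits — is use the skew-symmetry of $A$ to double up: $k$ disjoint increasing subsequences in $\word(A^1_r)$ of total length $>p_r+\cdots+p_{r-k+1}$ produce, together with their negated reversals in $\word(A^{-r}_{-1})$ and the weakly increasing middle row, $2k+1$ disjoint increasing subsequences in $\word(A)$ of total length $>2(p_r+\cdots+p_{r-k+1})+p_0$, and one must then check, from $\content(\bq)=\content(\bp)$ and $\bq\ge\bp$, that $q_1+\cdots+q_{2k+1}$ cannot be that large. That last comparison is a real combinatorial argument about which $\bq$ share the content of $\bp$ and dominate it — it is not an automatic consequence of Lemma \ref{L:altRS} — and it is precisely the content of \cite[Lemma 5.11]{BG1} that the paper is importing. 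As written, your proposal names the right ingredients but skips the step where they do the work.
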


\subsection{The Barbasch--Vogan algorithm} \label{ss:BV}

The Barbasch--Vogan algorithm from \cite{BV1}
takes as input $\lambda$, an integral weight for a classical Lie algebra of type B or C, and outputs
$\operatorname{BV}(\lambda)$, the Jordan type of the associated variety of $\Ann_{U(\g)}L(\lambda)$.
Below we recall the description of it given in \cite[\S5.2]{BG1}.
We note that there is a version of it for type D, but we do not require that here.

We need to define the {\em content of a partition}.
Let $\bq = (q_1 \le q_2 \le \dots, \le q_m)$ be a partition.
By inserting $0$ at the beginning if necessary, we may assume that $m$ is odd.
Let $(s_1, \dots, s_k)$, $(t_1, \dots, t_l)$ be such that
as unordered lists,
$(q_1, q_2+1, q_3+2, \dots, q_r+r-1)$ is equal to
$(2 s_1, \dots, 2 s_k, 2 t_1 +1, \dots, 2 t_l+1)$.
Now we define the content of $\bq$ to be the unordered list
\[
  \content(\bq) = (s_1, \dots, s_k, t_1, \dots, t_l).
\]

\noindent {\bf Algorithm:}

\noindent {\em Input:} $\lambda = \sum_{i=1}^n a_i \eps_i$, an integral weight in $\t^*$.

\noindent {\em Step 1:} Calculate $\bq =
\part(\RS(a_1 , \dots , a_n , -a_n , \dots , -a_1 ))$.

\noindent {\em Step 2:}
Calculate $\content(\bq)$. \\
Let $(u_1 \leq \dots \leq u_{2k+1})$ be the sorted list with the same entries as $\content(\bq)$. \\
For $i=1,\dots, k+1$ let $s_i = u_{2 i-1}$. \\
For $i=1,\dots, k$ let $t_i = u_{2i}$.

\noindent {\em Step 3:}
Form the list $(2 s_1+1, \dots, 2 s_{k+1}+1, 2 t_1,\dots, 2 t_{k})$. \\
In either case let $(v_1< \dots < v_k)$ be this list after sorting.

\noindent {\em Output:} $\BV(\lambda) = \bq' = (v_1, v_2-1, \dots, v_{2k+1}-2k)$.

\medskip

We note that the output partition $\bq'$ is a partition (potentially with an extraneous zero
at the beginning) is the Jordan type of a {\em special} nilpotent orbit of $\g$; this proved
in \cite{BV1}.

For our purposes in this paper we also need
a modified version of the algorithm to use in the case
$\g = \so_{2n+1}$.   This modified version is denoted by $\BV'$.
It works in exactly the same way as $\BV$ except that instead of calculating
$\RS(a_1 , \dots , a_n , -a_n , \dots , -a_1 )$, we calculate
$\RS(a_1 , \dots , a_n , 0, -a_n , \dots , -a_1 )$ in Step 1.

In Corollary \ref{C:BV} in the appendix to this paper it is proved that
$$
\BV(\lambda) = \BV'(\lambda)
$$
for $\lambda \in \t^*$ in the case $\g = \so_{2n+1}$.
This proof of this is entirely combinatorial and may be of independent
interest so it is has been placed in an appendix.  In light of this
we redefine $\BV(\lambda)$, so that is the old $\BV(\lambda)$ in the case
$\g = \sp_{2n}$ and is $\BV'(\lambda)$ in the case
$\g = \so_{2n+1}$.

For convenience of reference later in this paper we state the following
theorem from \cite{BV1}.

\begin{Theorem} \label{T:BV}
  Let $\lambda \in \t^*_\Z$.  Then
 the associated variety to $\Ann_{U(\g)} L(\lambda)$ is equal to the nilpotent
 $G$-orbit with Jordan type given by $\BV(\lambda)$.
\end{Theorem}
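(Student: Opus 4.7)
The plan is to reduce the statement essentially to a direct quotation of the main theorem of \cite{BV1}, handling the fact that my notation $\BV$ has been redefined differently from the original Barbasch--Vogan algorithm. First I would observe that what Barbasch and Vogan prove in \cite{BV1} is exactly the content of Theorem \ref{T:BV}, but with the algorithm as originally formulated, i.e.\ using $\RS(a_1,\dots,a_n,-a_n,\dots,-a_1)$ in Step 1 for \emph{both} types B and C. In the type C case $\g = \sp_{2n}$ our redefined $\BV$ is literally identical to this original algorithm, so there is nothing to do: Theorem \ref{T:BV} follows by direct citation.

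For the type B case $\g = \so_{2n+1}$, our redefined $\BV$ is $\BV'$, which in Step 1 inserts an extra $0$ in the middle of the palindromic word before applying $\RS$. So here the plan is to invoke the purely combinatorial identity
\[
\BV(\lambda) = \BV'(\lambda)
\]
valid for every integral weight $\lambda \in \t^*_\Z$, which is the content of Corollary \ref{C:BV} in the appendix. Combining this identity with the original Barbasch--Vogan theorem (applied to the un-modified $\BV$) immediately produces the statement of Theorem \ref{T:BV} for the redefined $\BV$ in type B as well.

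Thus the only genuine work lies in Corollary \ref{C:BV}, not in Theorem \ref{T:BV} itself, and I expect the former to be the main obstacle. To prove $\BV(\lambda) = \BV'(\lambda)$ in type B I would analyse carefully how the Robinson--Schensted insertion responds to inserting a single $0$ into the middle of the palindromic word $(a_1,\dots,a_n,-a_n,\dots,-a_1)$: this changes the partition $\bq = \part(\RS(\cdot))$ in a controlled way (the total size increases by exactly one, and by Lemma \ref{L:altRS} one can track the effect on each $q_1+\cdots+q_k$), and the point is then to verify that the passage $\bq \mapsto \content(\bq) \mapsto \bq'$ in Steps 2 and 3 absorbs this change and delivers the same final partition. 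The parity bookkeeping in the definition of $\content(\bq)$ (splitting the shifted sequence into even and odd parts) is what I expect to make this work, and conversely is exactly why the verification is delicate enough to warrant its own appendix.
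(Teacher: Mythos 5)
Your treatment of Theorem \ref{T:BV} itself matches the paper exactly: for $\g = \sp_{2n}$ the redefined $\BV$ is the original Barbasch--Vogan algorithm, so the theorem is a direct quotation of \cite{BV1}; for $\g = \so_{2n+1}$ one combines the same citation with the combinatorial identity $\BV(\lambda) = \BV'(\lambda)$ from Corollary \ref{C:BV}. The paper does no more than this (indeed it simply says ``we state the following theorem from \cite{BV1}''), so your proposal is correct and takes the same route.

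One caveat about the aside at the end, where you sketch how you would prove Corollary \ref{C:BV}: this is not the paper's approach, and I think the direct attack you describe is substantially more delicate than you suggest. Inserting the $0$ in the \emph{middle} of the palindromic word (not at the end) does not simply add one box to the RS shape in a way Lemma \ref{L:altRS} lets you trace; the difference propagates through the remaining insertions $-a_n,\dots,-a_1$, and controlling how $\content(\cdot)$ responds to the resulting change in shape is not a purely local computation. The paper instead passes to domino tableaux: it uses Leeuwen's identification of $\DT \circ \RS$ with Garfinkle's algorithms $\operatorname{G}_0$ and $\operatorname{G}_1$ (Proposition \ref{P:Le}), Pietraho's result (Theorem \ref{T:Pi}) expressing $\operatorname{G}_1$ as $\operatorname{G}_0$ followed by moving through a set of cycles, and the key observation (Theorem \ref{T:content}) that moving through a cycle preserves the content of the underlying partition. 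That machinery is what makes the parity bookkeeping tractable, and is the reason the verification occupies an appendix.
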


\subsection{The $\tau$-equivalence} \label{ss:tau}
The Barbasch--Vogan Algorithm is used to find the associated variety of $\Ann_{U(\g)}(L(\lambda))$,
however in order to determine the action of the component group we need to be able
to determine when
$\Ann_{U(\g)} L(\mu) = \Ann_{U(\g)} L(\lambda)$.
This can be done using the $\tau$-equivalence.
This is an equivalence
relation on
the set of integral
weights of $\t$.

Recall our realization of $\g$ and its Borel subalgebra $\b$ defined in \S\ref{SS:realize},
and recall that $\Phi^+$ is the system of positive roots for $\g$ defined from $\b$. Let
$\Delta$ be the base of $\Phi$ corresponding to $\Phi^+$.
Also for $\alpha \in \Phi$, let $s_\alpha \in W$ denote the corresponding reflection
in the Weyl group $W$ of $\g$ with respect to $\t$.
For $w \in W$, let
\[
   S(w) = \{ \alpha  \in \Phi^+ \mid w \alpha \not \in \Phi^+ \}.
\]
Now let
\[
   \tau(w) = S(w) \cap \Delta.
\]

Suppose that $\lambda \in \t^*$ is an
integral antidominant weight.
Let $\alpha \in \Delta$ and $w \in W$.  Suppose that $\alpha \in \tau(w^{-1})$ satisfies
$\tau(w^{-1} s_\alpha) \not \subseteq \tau(w^{-1})$.
Then
\[
   \Ann_{U(\g)} L(s_\alpha w \lambda) = \Ann_{U(\g)} L(w \lambda).
\]
by \cite[Theorem 5.1]{Jo3}, see also \cite[Proposition 15]{BV1}.
With this in mind, we define the $\tau$-equivalence on integral weights to be the equivalence relation
generated by declaring that
$$
\lambda_1 \sim^\tau \lambda_2
$$
if for some antidominant integral weight  $\lambda'$, $w \in W$, and $\alpha \in \Delta$ the following hold:
$\lambda_1 = w \lambda'$, $\lambda_2 = s_\alpha w \lambda'$, and
$\tau(w^{-1} s_\alpha) \not \subseteq \tau(w^{-1})$.
In fact the below theorem, which is \cite[Theorem 3.5.9]{Ga3}, states that the
$\tau$-equivalence is a complete invariant on primitive ideals.

\begin{Theorem} \label{T:tau}
   Let $\lambda, \mu \in \t^*$ be integral weights.
  Then $\lambda \sim^\tau \mu$ if and only if
  $\Ann_{U(\g)} L(\lambda) = \Ann_{U(\g)} L(\mu)$.
\end{Theorem}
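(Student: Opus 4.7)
The plan is to prove the two directions separately. The forward implication is immediate from the generating relation: if $\lambda \sim^\tau \mu$, then by definition of the equivalence closure there is a chain $\lambda = \nu_0, \nu_1, \dots, \nu_k = \mu$ whose consecutive pairs are related by a single generating step $\nu_i = w\lambda'$, $\nu_{i+1} = s_\alpha w \lambda'$ with $\alpha \in \tau(w^{-1})$ and $\tau(w^{-1} s_\alpha) \not\subseteq \tau(w^{-1})$. Each such step preserves the annihilator by the result of Joseph \cite[Theorem 5.1]{Jo3} cited just before the definition of $\sim^\tau$, so induction on $k$ gives $\Ann_{U(\g)} L(\lambda) = \Ann_{U(\g)} L(\mu)$.

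For the reverse implication, assume $\Ann_{U(\g)} L(\lambda) = \Ann_{U(\g)} L(\mu)$. Equality of annihilators forces equality of central characters, so by the Harish-Chandra theorem $\lambda$ and $\mu$ lie in a common $W$-orbit; fixing an antidominant representative $\lambda' \in \t^*$ we write $\lambda = w\lambda'$ and $\mu = w'\lambda'$ for some $w, w' \in W$. Duflo's theorem identifies the primitive ideals of $U(\g)$ of integral central character attached to $\lambda'$ with those of the form $\Ann_{U(\g)} L(u\lambda')$, and by results of Joseph and Vogan the set of $u \in W$ giving a fixed annihilator is a Kazhdan--Lusztig left cell relative to the stabilizer of $\lambda'$. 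The problem thus reduces to showing that left-cell equivalence on $W\lambda'$ coincides with the $\tau$-equivalence.

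One inclusion is Vogan's generation theorem, which asserts that left cells are exactly the equivalence classes generated by the elementary wall-crossings of precisely the form defining $\sim^\tau$. The opposite and harder inclusion, that $\tau$-equivalent elements necessarily lie in a common left cell, is the substantive content of \cite[Theorem 3.5.9]{Ga3}. Garfinkle constructs a complete combinatorial invariant of $\tau$-equivalence classes in types B and C via the ``left'' domino tableau attached to $w \in W$, generalizing Joseph's Robinson--Schensted invariant for type A. I expect the principal obstacle to lie in proving completeness of this invariant: this requires an intricate induction on the Bruhat length of $w$, tracking how each generating move of $\sim^\tau$ alters the domino tableau and verifying that distinct left cells produce distinct tableaux. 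Once this invariant is established, matching Garfinkle tableaux against left cells yields the equivalence and completes the argument.
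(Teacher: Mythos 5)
The paper offers no proof of this theorem at all: it is quoted directly as \cite[Theorem 3.5.9]{Ga3}, with the easy implication ($\tau$-equivalence implies equal annihilators) already observed in the preceding paragraph via \cite[Theorem 5.1]{Jo3} and \cite[Proposition 15]{BV1}. Your proposal takes the same route at the top level --- forward direction by Joseph plus induction, reverse direction deferred to Garfinkle --- so there is no real strategic divergence from the paper.

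That said, your account of the reverse direction contains a directional muddle that should be corrected. You write that ``the opposite and harder inclusion, that $\tau$-equivalent elements necessarily lie in a common left cell, is the substantive content of \cite[Theorem 3.5.9]{Ga3}.'' This is inverted: ``$\tau$-equivalent $\Rightarrow$ same left cell'' is the \emph{easy} direction, being nothing more than a restatement of the forward implication you already established, since each generating wall-crossing preserves the annihilator and hence the cell. What Garfinkle actually supplies is the converse: that two integral weights with equal annihilators are necessarily joined by a chain of elementary $\tau$-equivalences, established via the completeness of her domino-tableau invariant in types B and C. Likewise, attributing ``left cells are exactly the $\tau$-classes'' to a single ``Vogan generation theorem'' overstates the literature; Vogan's contribution gives the generalized $\tau$-invariant as a \emph{necessary} condition for lying in a common cell, and it is exactly Garfinkle's series of papers that upgrade it to a sufficient one. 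Finally, the paper flags that the cited sources are usually stated for regular weights and that \cite[Lemma 5.6]{Ja} is required to extend them to non-regular integral weights; your sketch does not address this point and should.
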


We identify the weight $\sum_{i=1}^n a_i \eps_i \in \t^*$ with the list $(a_1, \dots, a_n)$.
Then one can check that the $\tau$-equivalence is generated by
the following three relations:
\begin{itemize}
 \item[(R1)]
$(a_1, \dots, a_n)
\sim^\tau
(b_1, \dots, b_n)$
if $(a_1, \dots, a_n) \sim^K (b_1, \dots, b_n)$
\item[(R2)]
$(a_1, \dots, a_n) \sim^\tau (a_1, \dots, a_{n-1}, -a_n)$
if $|a_{n-1}| < |a_n|$.

\item[(R3)]
$(a_1, \dots, a_n) \sim^\tau
(a_1, \dots, a_{n-2}, a_n,a_{n-1})$ if
$a_{n-1} a_n < 0$.
\end{itemize}
In (R1) $\sim^K$
denotes Knuth equivalence, as defined in \cite[\S2]{Fu}.

The references for the results stated often only deal with the
case of regular weights. However, \cite[Lemma 5.6]{Ja} implies that they are
valid for non-regular weights too.

\section{The 3 row case for $\g = \sp_{2n}$} \label{S:Ccase}

Let $\g = \sp_{2n}$ and suppose that $\bp$ has three parts.
Then we write $\bp = (l^2,m)$, where $l$ must be even if $l < m$.
In this section we classify the finite dimensional $U(\g,e)$-modules, and we
use the $\tau$-equivalence to
describe the component group action on these modules.

Let $C$ be the component group of $e$, so
\[
C = \begin{cases}
     \lan c \ran \cong \Z_2 & \text{if $l$ is even and $l \ne m$;} \\
     1 & \text{otherwise}.
    \end{cases}
\]

The lemma below deals with the (easy) cases where $l$ is even and $l \leq m$, or
$l$ is odd (in which case $l > m$).

\begin{Lemma} \label{L:recsC}
    Suppose that
$A \in \sTab^\leq(P)$ and
$l$ is even and $l \leq m$,
or $l$ is odd.
   Then $L(A)$ is finite dimensional if and only if $A$ is justified row equivalent to column strict.
   Furthermore in the case that $l$ is even and $l < m$, if $L(A)$ is finite dimensional, then
   $c \cdot L(A) \cong L(A)$.
\end{Lemma}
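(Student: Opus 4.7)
The plan is to translate the first assertion into a claim about the Barbasch--Vogan algorithm via the standard dictionary, and then verify it by a direct calculation in the three-row case.

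Combining Theorems \ref{T:BGKt3}, \ref{T:BGKt2} and \ref{T:BV}, the module $L(A)$ is finite dimensional if and only if $\BV(\lambda_A) = \bp$, and by Theorem \ref{T:recs}(i), $A$ is justified row equivalent to column strict if and only if $\part(\RS(A)) = \bp$. So the first assertion reduces to the equivalence
\[
\BV(\lambda_A) = \bp \iff \part(\RS(A)) = \bp.
\]
The skew symmetry of the middle row of $A$ makes the Step~1 word $w' = (a_1, \dots, a_n, -a_n, \dots, -a_1)$ of the BV algorithm equal to the top-to-bottom, left-to-right reading of $A$, so the intermediate partition $\bq := \part(\RS(w'))$ coincides with $\part(\RS(A))$. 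Because $\bp$ satisfies the parity restrictions of case (a) or (b), a direct inspection shows that Steps~2 and 3 of BV return $\bp$ when applied to $\bq = \bp$; this handles ``$\Leftarrow$''. For the converse, Lemma \ref{L:altRS} applied to the three weakly increasing rows of $A$ shows $\bq$ has at most three parts, and inverting the algorithm from the sorted Step~3 list $(v_1 < v_2 < v_3)$, which equals $(l, l+1, m+2)$ in case (a) and $(m, l+1, l+2)$ in case (b), pins down the content multiset uniquely from the parity of the entries. A short case check of the orderings of $(q_1, q_2+1, q_3+2)$ compatible with $q_1 \leq q_2 \leq q_3$ then forces $\bq = \bp$.

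For the second assertion, assume $l$ is even, $l < m$, and $L(A)$ is finite dimensional, so $A \in \sTab^\cc(P)$ by the first assertion. Write $c \cdot L(A) \cong L(B)$ for some $B \in \sTab^\leq(P)$. Since $C$-twisting preserves finite dimensionality, $L(B)$ is finite dimensional too, so $B \in \sTab^\cc(P)$ as well. The fibres of Losev's map $\cdot^\dagger$ are $C$-orbits (property~(4) of \S\ref{SS:Losev}), so $\Ann_{U(\g,e)} L(A)$ and $\Ann_{U(\g,e)} L(B)$ have the same $\cdot^\dagger$-image; then Theorem \ref{T:BGKt3} gives $\Ann_{U(\g)} L(\lambda_A) = \Ann_{U(\g)} L(\lambda_B)$, and Theorem \ref{T:recs2} forces $A = B$, whence $c \cdot L(A) \cong L(A)$.

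The main obstacle is the uniqueness step in inverting BV: one has to confirm that $\bp$ has no other three-part preimage $\bq$ with $q_1 \leq q_2 \leq q_3$ under the algorithm. The parity constraints of cases (a) and (b) leave essentially no freedom, reducing this to a brief finite check.
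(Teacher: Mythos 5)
Your proof is correct and takes essentially the same approach as the paper's. The paper's own proof is terser: it simply computes $\content(l,l,m)=(\tfrac{l}{2},\tfrac{l}{2},\tfrac{m}{2}+1)$, observes that $(l,l,m)$ is the unique (at-most-three-part) partition of $2n$ with this content, and then invokes Theorems~\ref{T:BGKt2} and \ref{T:recs} to conclude; your more explicit ``invert the algorithm from the Step~3 list'' argument and your use of Lemma~\ref{L:altRS} to bound the number of parts of $\bq$ are exactly the unstated steps the paper compresses into ``it is easy to see,'' and your elaboration of the second assertion via Losev's fibres, Theorem~\ref{T:BGKt3} and Theorem~\ref{T:recs2} spells out the paper's one-line reference to Theorem~\ref{T:recs2} in a correct way.
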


\begin{proof}
First we consider the case that
$l$ is even and $l \leq m$.
So
$\content(l,l,m) = (\frac{l}{2}, \frac{l}{2}, \frac{m}{2}+1)$.
It is easy to see that the only partition with this content is $(l,l,m)$.
Therefore, by Theorem \ref{T:BGKt2} and Theorem \ref{T:recs} we have, $L(A)$ is finite dimensional if and only $\part(\RS(A)) = (l,l,m)$
if and only if
$A$ is justified row equivalent to column strict.
Now the statement about the action of $C$ follows from \ref{T:recs2}.

The case $l$ is odd is similar.
\end{proof}

So we are left to consider the case where
$l > m$ and $l$ is even.  Below we explain the action of $c$ on the
s-tables corresponding to finite dimensional $U(\g,e)$-modules.
We need to use the definition of the $\sharp$-special element of a list of integers,
which is given in \cite[\S6]{Bro2}.

Let $B \in \sTab^\leq(P')$ be an s-table for some s-frame $P'$ with an even number of rows.
If the $\sharp$-element of the upper middle row of $B$ is defined, then we
let $c'B$ denote the s-table $B' \in \sTab^\leq(P')$ where all the rows of $B'$
are the same as $B$,
except that in the upper middle row the $\sharp$-element is replaced by
its negative and the corresponding change to the lower middle row is also made;
otherwise we say the $c'B$ is undefined.

Let $a_1, \dots, a_l$ be the entries in the top row,
and let $b_1, \dots, b_{m/2}$ be the entries in the
first half of the middle row of $A$.
Let $A'$ be the s-table
with 4 rows of row lengths $l, m/2, m/2, l$,
where the top row has entries $a_1, \dots, a_l$ and the row below the top row has entries
$b_1, \dots, b_{m/2}$.

The rows of $A'$ are labelled by $1, 2, -2, -1$ from top to bottom.
We have the row swapping operators $\bar s_i$ from \S\ref{ss:rowswap} acting on $A'$;
for convenience in this section we do not include the $\star$ in the notation.
Let $B =  \bar s_1  c'   \bar s_1  A'$, provided that it is defined, otherwise $c \cdot A$
is undefined.

Let $d_1, \dots, d_l$ be the entries in the top row of $B$, and let $e_1, \dots, e_m$ be the entries in the row below the top row of $B$.
If
$e_1, \dots, e_m$ are not all negative, then we say that $c \cdot A$ is undefined.
Otherwise
we declare that $c \cdot A$ is the s-table with row lengths $(l, m, l)$ where the top row has entries
$d_1, \dots, d_l$ and the middle row has entries $e_1, \dots, e_m, -e_m, \dots, -e_1$.

For example,
if
\[
A =
\begin{array}{c}
\begin{picture}(80,60)
   \put(0,0){\line(1,0){80}}
   \put(0,20){\line(1,0){80}}
   \put(0,40){\line(1,0){80}}
   \put(0,60){\line(1,0){80}}
   \put(0,0){\line(0,1){20}}
   \put(0,40){\line(0,1){20}}
   \put(20,0){\line(0,1){60}}
   \put(40,0){\line(0,1){60}}
   \put(60,0){\line(0,1){60}}
   \put(80,0){\line(0,1){20}}
   \put(80,40){\line(0,1){20}}
 \put(8,10){\makebox(0,0){{-5}}}
 \put(28,10){\makebox(0,0){{-4}}}
 \put(48,10){\makebox(0,0){{-3}}}
 \put(68,10){\makebox(0,0){{-2}}}
 \put(28,30){\makebox(0,0){{-1}}}
 \put(50,30){\makebox(0,0){{1}}}
 \put(10,50){\makebox(0,0){{2}}}
 \put(30,50){\makebox(0,0){{3}}}
 \put(50,50){\makebox(0,0){{4}}}
 \put(70,50){\makebox(0,0){{5}}}
\end{picture}
\end{array}
\quad
\text{then}
\quad
A' =
\begin{array}{c}
\begin{picture}(80,80)
   \put(0,0){\line(1,0){80}}
   \put(0,20){\line(1,0){80}}
   \put(0,40){\line(1,0){20}}
   \put(0,60){\line(1,0){80}}
   \put(0,80){\line(1,0){80}}
   \put(0,0){\line(0,1){80}}
   \put(20,0){\line(0,1){80}}
   \put(40,0){\line(0,1){20}}
   \put(60,0){\line(0,1){20}}
   \put(80,0){\line(0,1){20}}
   \put(40,60){\line(0,1){20}}
   \put(60,60){\line(0,1){20}}
   \put(80,60){\line(0,1){20}}

 \put(8,10){\makebox(0,0){{-5}}}
 \put(28,10){\makebox(0,0){{-4}}}
 \put(48,10){\makebox(0,0){{-3}}}
 \put(68,10){\makebox(0,0){{-2}}}
 \put(10,30){\makebox(0,0){{1}}}
 \put(8,50){\makebox(0,0){{-1}}}
 \put(10,70){\makebox(0,0){{2}}}
 \put(30,70){\makebox(0,0){{3}}}
 \put(50,70){\makebox(0,0){{4}}}
 \put(70,70){\makebox(0,0){{5}}}
\end{picture}
\end{array}.
\]
So
\[
 \bar s_1  A' =
\begin{array}{c}
\begin{picture}(80,80)
   \put(0,0){\line(1,0){20}}
   \put(0,20){\line(1,0){80}}
   \put(0,40){\line(1,0){80}}
   \put(0,60){\line(1,0){80}}
   \put(0,80){\line(1,0){20}}
   \put(0,0){\line(0,1){80}}
   \put(20,0){\line(0,1){80}}
   \put(40,20){\line(0,1){20}}
   \put(60,20){\line(0,1){20}}
   \put(80,20){\line(0,1){20}}
   \put(40,40){\line(0,1){20}}
   \put(60,40){\line(0,1){20}}
   \put(80,40){\line(0,1){20}}

 \put(8,10){\makebox(0,0){{-2}}}
 \put(8,30){\makebox(0,0){{-5}}}
 \put(28,30){\makebox(0,0){{-4}}}
 \put(48,30){\makebox(0,0){{-3}}}
 \put(70,30){\makebox(0,0){{1}}}
 \put(8,50){\makebox(0,0){{-1}}}
 \put(10,70){\makebox(0,0){{2}}}
 \put(30,50){\makebox(0,0){{3}}}
 \put(50,50){\makebox(0,0){{4}}}
 \put(70,50){\makebox(0,0){{5}}}
\end{picture}
\end{array}
\quad
\text{and}
\quad
c'   \bar s_1  A' =
\begin{array}{c}
\begin{picture}(80,80)
   \put(0,0){\line(1,0){20}}
   \put(0,20){\line(1,0){80}}
   \put(0,40){\line(1,0){80}}
   \put(0,60){\line(1,0){80}}
   \put(0,80){\line(1,0){20}}
   \put(0,0){\line(0,1){80}}
   \put(20,0){\line(0,1){80}}
   \put(40,20){\line(0,1){20}}
   \put(60,20){\line(0,1){20}}
   \put(80,20){\line(0,1){20}}
   \put(40,40){\line(0,1){20}}
   \put(60,40){\line(0,1){20}}
   \put(80,40){\line(0,1){20}}
 \put(8,10){\makebox(0,0){{-2}}}
 \put(8,30){\makebox(0,0){{-4}}}
 \put(28,30){\makebox(0,0){{-3}}}
 \put(50,30){\makebox(0,0){{1}}}
 \put(70,30){\makebox(0,0){{5}}}
 \put(8,50){\makebox(0,0){{-5}}}
 \put(10,70){\makebox(0,0){{2}}}
 \put(28,50){\makebox(0,0){{-1}}}
 \put(50,50){\makebox(0,0){{3}}}
 \put(70,50){\makebox(0,0){{4}}}
\end{picture}
\end{array}.
\]
Hence
\[
 \bar s_1  c'  \bar s_1  A' =
\begin{array}{c}
\begin{picture}(80,80)
   \put(0,0){\line(1,0){80}}
   \put(0,20){\line(1,0){80}}
   \put(0,40){\line(1,0){20}}
   \put(0,60){\line(1,0){80}}
   \put(0,80){\line(1,0){80}}
   \put(0,0){\line(0,1){80}}
   \put(20,0){\line(0,1){80}}
   \put(40,0){\line(0,1){20}}
   \put(60,0){\line(0,1){20}}
   \put(80,0){\line(0,1){20}}
   \put(40,60){\line(0,1){20}}
   \put(60,60){\line(0,1){20}}
   \put(80,60){\line(0,1){20}}

 \put(8,10){\makebox(0,0){{-4}}}
 \put(28,10){\makebox(0,0){{-3}}}
 \put(48,10){\makebox(0,0){{-2}}}
 \put(70,10){\makebox(0,0){{5}}}
 \put(10,30){\makebox(0,0){{1}}}
 \put(8,50){\makebox(0,0){{-1}}}
 \put(8,70){\makebox(0,0){{-5}}}
 \put(30,70){\makebox(0,0){{2}}}
 \put(50,70){\makebox(0,0){{3}}}
 \put(70,70){\makebox(0,0){{4}}}
\end{picture}
\end{array},
\quad
\text{so}
\quad
c \cdot A =
\begin{array}{c}
\begin{picture}(80,60)
   \put(0,0){\line(1,0){80}}
   \put(0,20){\line(1,0){80}}
   \put(0,40){\line(1,0){80}}
   \put(0,60){\line(1,0){80}}
   \put(0,0){\line(0,1){20}}
   \put(0,40){\line(0,1){20}}
   \put(20,0){\line(0,1){60}}
   \put(40,0){\line(0,1){60}}
   \put(60,0){\line(0,1){60}}
   \put(80,0){\line(0,1){20}}
   \put(80,40){\line(0,1){20}}
 \put(8,10){\makebox(0,0){{-4}}}
 \put(28,10){\makebox(0,0){{-3}}}
 \put(48,10){\makebox(0,0){{-2}}}
 \put(70,10){\makebox(0,0){{5}}}
 \put(28,30){\makebox(0,0){{-1}}}
 \put(50,30){\makebox(0,0){{1}}}
 \put(8,50){\makebox(0,0){{-5}}}
 \put(30,50){\makebox(0,0){{2}}}
 \put(50,50){\makebox(0,0){{3}}}
 \put(70,50){\makebox(0,0){{4}}}
\end{picture}
\end{array}.
\]

The following lemma follows from \cite[Remark 5.8]{BG1}.

\begin{Lemma} \label{L:cctau}
 Let $A \in \sTab^\leq (P)$ and suppose $c \cdot A$ is defined.
 Then $\word(A) \sim^\tau \word(c \cdot A)$.
\end{Lemma}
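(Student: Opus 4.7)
The plan is to factor the construction of $c \cdot A$ into a short chain of elementary moves on s-tables, and then verify that each move preserves the $\tau$-equivalence class of the reading word, as was already established in the even multiplicity setting in \cite[Remark 5.8]{BG1}. Concretely, the construction runs
\[
A \;\longleftrightarrow\; A' \;\xrightarrow{\,\bar s_1 \star\,} \;\bar s_1 \star A' \;\xrightarrow{\,c'\,}\; c'\bar s_1 \star A' \;\xrightarrow{\,\bar s_1 \star\,}\; \bar s_1 c' \bar s_1 \star A' \;\longleftrightarrow\; c \cdot A,
\]
so it suffices to show that each of the four arrows, along with the two trivial outer identifications, preserves $\tau$-equivalence of the reading word.

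The two outer identifications $A \leftrightarrow A'$ and $\bar s_1 c' \bar s_1 \star A' \leftrightarrow c \cdot A$ simply repackage the skew symmetric middle row of a 3-row s-table as the middle pair of rows of a 4-row s-table and back again, so the reading words agree up to Knuth moves of type (R1), which is one of the allowed $\tau$-equivalences. The two row-swap arrows $\bar s_1 \star$ are instances of the row swapping operation studied in \cite[\S4]{BG1} and \cite[\S5]{BG2}, and the assertion that this operation preserves $\tau$-equivalence of the reading word is precisely the content of \cite[Remark 5.8]{BG1}. Because the combinatorial definition of best-fitting and row swapping does not rely on the even multiplicity hypothesis (compare the proof of Theorem \ref{T:recs} above), the argument there carries over verbatim to our 4-row auxiliary table $A'$, whose outer two rows have length $l$ and whose middle two rows have length $m/2$.

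The genuinely new arrow is $c'$, which negates the $\sharp$-special element of the upper middle row and performs the mirror change in the lower middle row. I expect this to be the main obstacle, and the strategy, again following \cite[Remark 5.8]{BG1}, is to use Knuth moves (R1) to bring the $\sharp$-special element into a position adjacent to its mirror at the boundary between the upper and lower middle rows, then apply the reflection move (R2) or the opposite-sign swap (R3) to flip its sign, and finally reverse the preparatory Knuth moves. The key input is the defining property of the $\sharp$-special element from \cite[\S6]{Bro2}, which is precisely what guarantees that the neighboring entry has small enough absolute value for (R2) or the appropriate sign for (R3) to apply at the critical moment. Once this step is in place, the lemma follows by composing the $\tau$-equivalences from the four individual arrows with the trivial identifications at the ends.
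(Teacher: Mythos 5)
The paper's own proof of this lemma is a single sentence: it follows from \cite[Remark 5.8]{BG1}. Your proposal attempts to unpack the content of that citation, and the decomposition you set up — identifying $A$ with the auxiliary four-row s-table $A'$, then applying $\bar s_1 \star$, then $c'$, then $\bar s_1 \star$, then identifying back — is exactly how $c \cdot A$ is built in Section 5, so the skeleton is right. A few remarks on the individual arrows. The outer identifications preserve the reading word \emph{literally}: splitting the skew-symmetric middle row of $A$ into the two middle rows of $A'$ does not change the order in which entries are read, so no Knuth moves are needed there. For the row-swap arrows, your observation that best-fitting is a combinatorial device independent of the even-multiplicity hypothesis is a fair point, and the fact that a defined row swap preserves the Knuth class of the reading word (hence gives a $\tau$-equivalence via (R1)) is what one would expect from its jeu-de-taquin-like character; this part of the argument is plausible and aligned with what \cite[Remark 5.8]{BG1} presumably provides.

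The problem is the arrow you yourself flag as ``the main obstacle'': the $c'$ step. What you write there — bring the $\sharp$-special element to the boundary between the upper and lower middle rows by Knuth moves, flip its sign using (R2) or swap using (R3), reverse the preparatory moves — is a plan, not a proof. You have not shown that the Knuth moves can actually put the $\sharp$-special element and its mirror in the positions $n$ and $n+1$ of the full word, nor that at that moment the hypothesis of (R2) (the neighbouring entry has strictly smaller absolute value) or of (R3) (opposite signs) is met. This is precisely where the defining property of the $\sharp$-special element from \cite[\S6]{Bro2} has to be used, and it is the nontrivial content of the lemma. As written, that step is an expectation rather than an argument, so the proof has a genuine gap. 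Since the paper delegates the whole matter to \cite[Remark 5.8]{BG1}, either complete the $c'$ step in detail or, more efficiently, state explicitly what \cite[Remark 5.8]{BG1} asserts (it covers the composite $A \mapsto c\cdot A$, not merely the individual row swaps) and verify that its hypotheses are satisfied in the present, not-necessarily-even-multiplicity setting.
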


Our next goal is to prove that $c \cdot A$ is defined when $A$ corresponds to a finite dimensional
$U(\g,e)$-module.

\begin{Lemma} \label{L:cdefined}
   Let $A \in \sTab^\leq (P)$.
	If $L(A)$ is finite dimensional
	then $c \cdot A$ is defined.
\end{Lemma}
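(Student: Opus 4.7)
The plan is to exploit Theorems \ref{T:BGKt2} and \ref{T:BV}, which together say that $L(A)$ is finite dimensional if and only if $\BV(\lambda_A) = \bp = (l, l, m)$. Write $a_1 \le \dots \le a_l$ for the top row of $A$ and $b_1 \le \dots \le b_{m/2} \le 0$ for the first half of the middle row; the full word of $A$ read across rows is the palindrome $w_A = (a_1, \dots, a_l, b_1, \dots, b_{m/2}, -b_{m/2}, \dots, -b_1, -a_l, \dots, -a_1)$, and $\lambda_A$ corresponds to its length-$n$ prefix.

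To show $c \cdot A$ is defined, I must check four things in turn: (a) $\bar s_1 \star A'$ is defined; (b) the upper middle row of $\bar s_1 \star A'$ admits a $\sharp$-special element; (c) the second $\bar s_1 \star$ is defined on $c' \bar s_1 \star A'$; (d) the resulting second row consists entirely of non-positive entries, so that extending by its skew-symmetric complement gives a valid non-decreasing middle row for $c \cdot A$.

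Steps (a) and (c) concern definedness of row swaps for s-tables on the even multiplicity pyramid of $(l^2, (m/2)^2)$, and I would address them using the best-fit machinery of \cite[\S 4]{BG1}. Although Lemma \ref{L:Aplus} applies directly only to $\bar S_r$ with $r = 1$ in the 3-row situation at hand and so does not give (a), (c) for free, the hypothesis $\BV(\lambda_A) = (l, l, m)$ applied via Lemmas \ref{L:altRS} and \ref{L:altcRS} to $w_A$ will force enough structural inequalities among the $a_i$ and $b_j$ to verify the best-fit conditions in the definition of $\bar s_1 \star$.

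The main obstacles are (b) and (d). For (b) I plan to translate the $\sharp$-special element condition of \cite[\S 6]{Bro2} into an assertion about the presence of an entry in the upper middle row of $\bar s_1 \star A'$ not paired with its negative, and to deduce this from the same BV constraints. The most delicate step is (d): the sign condition must follow from a careful tracking of how $c'$ interacts with the row-swap best-fit rules, exploiting the fact that the $\sharp$-special element is chosen with a specifically controlled magnitude so that flipping its sign preserves the non-decreasing structure after swapping back. If this direct combinatorial route becomes unwieldy, an alternative is an induction on a natural statistic of $A$, say the largest entry in absolute value, reducing to a boundary configuration in which $c \cdot A$ can be verified directly and then propagating by a row-by-row argument.
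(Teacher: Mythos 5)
Your plan correctly identifies that the hypothesis translates via Theorems \ref{T:BGKt2} and \ref{T:BV} into a constraint coming from the Barbasch--Vogan algorithm, and your checklist (a)--(d) is the right list of things to verify. But the proposal stops at an outline: phrases like ``will force enough structural inequalities'' and ``I plan to translate'' are promises, not arguments, and the one concrete structural observation that drives the paper's proof is missing.

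The key point is that $\BV(\lambda_A)=\bp$ does not pin down $\part(\RS(A))$; it only pins down its \emph{content}. From $\content(\part(\RS(A)))=\content(l,l,m)=(\tfrac{m}{2},\tfrac{l}{2},\tfrac{l+2}{2})$ one finds three candidate shapes $(l,l,m)$, $(l+1,l-1,m)$, $(l,l-1,m+1)$, rules out the third via Lemma \ref{L:altRS}, and is left with \emph{two} possibilities. The remainder of the proof is a case split on these two. In the $(l,l,m)$ case $A$ is already row equivalent to column strict and one reads off best-fit inequalities directly; in the $(l+1,l-1,m)$ case (which is exactly the case where the lemma has nontrivial content, since then $A$ itself is not justified row equivalent to column strict) one needs a different chain of arguments, and notably the definedness of the second row swap (your step (c)) is established in the paper not by pure $\RS$ combinatorics but by invoking the $\tau$-equivalence (Theorem \ref{T:tau}): if $\bar s_1\star$ failed on $c'\bar s_1 A'$, one would obtain two disjoint increasing strings of length $l+1$, contradicting $\word(c'\bar s_1 A')\sim^\tau \word(A)$. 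Your proposal treats $\BV(\lambda_A)=(l,l,m)$ as if it gave $\part(\RS(A))=(l,l,m)$ outright, never introduces the $(l+1,l-1,m)$ case, and never invokes $\tau$-equivalence, so as written it would not handle the case the lemma is really needed for. The fallback suggestion of inducting on the largest-magnitude entry is not supported by any concrete reduction step and does not rescue the argument.
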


\begin{proof}
	Let $a_1, \dots, a_l$ be the top row of $A$, and
	let $b_1, \dots, b_{m/2}$ be the first half of
	the middle row of $A$.
	Since $L(A)$ is finite dimensional we must have that
	$\content(\part(\RS(A))) = \content(l,l,m) = (\frac{m}{2},\frac{l}{2},\frac{l+2}{2})$.
This gives that $\part(\RS(A))$ must be
	$(l,l,m), (l+1, l-1, m),$ or $(l,l-1,m+1)$.
The last of these we can rule out by
Lemma \ref{L:altRS}.
Thus
$\part(\RS(A))$ =
	$(l,l,m)$ or $(l+1, l-1, m)$.
	In either case we note that $ \bar s_1  A'$ is defined,
otherwise we would have that
	for some $i \ge 0$ that $a_{l/2 -i} < b_{m/2-i}$, in which case we have the increasing subword
	$a_1, \dots, a_{l/2-i}, b_{m/2-i}, \dots, b_{m/2},
	-b_{m/2}, \dots, -b_{m/2-i}, -a_{l/2-i}, \dots, -a_1$
	 of length
	$l+2$, which contradicts Lemma \ref{L:altRS}.

	Now suppose that $\part(\RS(A)) = (l,l,m)$.
	Then by Theorem \ref{T:recs} $A$ is row equivalent to column strict,
	so we have that $a_{i} + a_{m-i+1} > 0$ for all $i$.
	Let $a_1' \leq \dots \leq a_{m}'$ be the elements from the top row which
	best fit over $b_1, \dots, b_{m/2}, -b_{m/2}, \dots, -b_1$.
	Let $a_1'', \dots, a_{l-m}''$ be the remaining elements of the top row.
	Then for $i=1,\dots, m/2$ we have that
	$- a_{m-i+1}' < b_i < a_i'$.
	Since $A$ is row equivalent to column strict,
we also have that $a_{i}'' + a_{l-m+1-i}'' > 0$ for all $i$.
	This shows that the $\sharp$-element of
	$a_{1}'', \dots, a_{l-m}'', b_1, \dots, b_{m/2}, a_{m/2+1}', \dots, a_{m}'$
is defined, and is greater than or
	equal to $0$.
	It also implies that the elements of
	$(a_{1}'', \dots, a_{l-m}'', b_1, \dots, b_{m/2}, a_{m/2+1}', \dots, a_{m}')^\sharp$
which best fit under $a_1', \dots, a_{m/2}'$
	are all negative.  Thus the $c \cdot A$ is defined.

	Now suppose that $\part(\RS(A)) = (l+1, l-1, m)$.
	By \cite[Lemma 5.6]{BG1} the $\sharp$-element of row 2 of $ \bar s_1  A'$ is defined, otherwise
	we could find an increasing subword of length $l+2$.
	Also the $\sharp$-element must be negative, otherwise
	we could not find an increasing subword of length
	$l+1$ in $\word( \bar s_1  A')$, since the middle two rows of $ \bar s_1  A'$ would then be column strict.

Next we need to prove that the action of $ \bar s_1$  is defined on $c' \bar s_1 A'$.
If it was not, then we could find
	two disjoint increasing strings of length $l+1$ in $\word(c' \bar s_1 A')$, which is a contradiction since
$\word(c'  \bar s_1 A')$ is $\tau$-equivalent to $\word(A)$; confer Theorem \ref{T:tau}.

	Finally we need to argue why the elements of row $2$ of
$c'  \bar s_1  A'$,
which best fit under row $1$
	are all negative.
If one the best fitting elements, say $b$ was positive, then we could form the following decreasing
chain: $a,b,-b,-a$ where $a$ is any element of row $1$ of $A'$ which is larger than $b$.
This contradicts the fact that $\part(\RS( \bar s_1  c' \bar s_1  A')) = (l,l,m)$ or $(l+1,l-1,m)$.
\end{proof}

We are now ready for the main theorem of this section.

\begin{Theorem} \label{T:caction1}
Suppose that $l$ is even and $l > m$, and let
$A \in \sTab^\leq(P)$.
   Then $L(A)$ is finite dimensional if and only if
$A$ is $C$-conjugate to an s-table that is justified row equivalent to column strict.
   Furthermore, if $L(A)$ is finite dimensional, then
   $c \cdot L(A) \cong L(c \cdot A)$.
\end{Theorem}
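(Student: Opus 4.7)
The plan breaks into classifying which $L(A)$ are finite dimensional, and then verifying that the combinatorial $c$-action on s-tables is compatible with the module-level $C$-action.

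For the classification, Theorems~\ref{T:BGKt2} and~\ref{T:BV} make $L(A)$ finite dimensional equivalent to $\BV(\lambda_A) = \bp$. Unfolding the Barbasch--Vogan algorithm (and using that $\bp$ is special, so the output is determined by $\content(\part(\RS(\word(A))))$), this is equivalent to $\content(\part(\RS(A))) = \content(\bp) = (m/2,\,l/2,\,l/2+1)$. Enumerating partitions of $2n$ with this content and ruling out $(l, l-1, m+1)$ via the skew-symmetry of $\word(A)$ and Lemma~\ref{L:altRS}, exactly as in the proof of Lemma~\ref{L:cdefined}, leaves $\part(\RS(A)) \in \{(l,l,m),\,(l+1,l-1,m)\}$. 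In the case $\part(\RS(A)) = (l,l,m)$, Theorem~\ref{T:recs}(i) places $A \in \sTab^\cc(P)$ directly. The main technical obstacle is the remaining case $\part(\RS(A)) = (l+1,l-1,m)$: here I would verify, by tracking the $\sharp$-element flip and the two applications of $\bar s_1$ on the auxiliary four-row s-table $A'$ and using Lemmas~\ref{L:altRS} and~\ref{L:largersmaller} to monitor the lengths of longest increasing and decreasing subsequences in the word, that $\part(\RS(c \cdot A)) = (l,l,m)$; Theorem~\ref{T:recs}(i) then gives $c \cdot A \in \sTab^\cc(P)$.

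For the converse direction of the classification, suppose $A = c^k \cdot B$ for some $B \in \sTab^\cc(P) \cap \sTab^\le(P)$. Then $\part(\RS(B)) = \bp$ by Theorem~\ref{T:recs}(i), so $L(B)$ is finite dimensional, and combining Lemma~\ref{L:cctau}, Theorem~\ref{T:tau}, Theorem~\ref{T:BGKt3}, and property~(4) of Losev's $\dagger$ map from \S\ref{SS:Losev} yields $L(A) \cong b \cdot L(B)$ for some $b \in C$, hence $L(A)$ is finite dimensional.

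For the compatibility statement, the same chain of results gives $L(c \cdot A) \cong b \cdot L(A)$ for some $b \in \{1,c\}$. When $c \cdot A \ne A$, the injectivity of $A \mapsto L(A)$ on $\sTab^\le(P)$ forces $L(c \cdot A) \not\cong L(A)$, so $b = c$ and we are done. When $c \cdot A = A$, write $c \cdot L(A) \cong L(A')$ for some $A' \in \sTab^\le(P)$; then $L(A')$ is finite dimensional and, by the $C$-invariance of $\dagger$-fibres together with Theorem~\ref{T:BGKt3}, $\Ann_{U(\g)} L(\lambda_A) = \Ann_{U(\g)} L(\lambda_{A'})$. If $A' \in \sTab^\cc(P)$ then Theorem~\ref{T:recs2} forces $A' = A$ and we conclude $c \cdot L(A) \cong L(A) = L(c \cdot A)$. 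Otherwise $\part(\RS(A')) = (l+1,l-1,m)$; by the classification step $c \cdot A' \in \sTab^\cc(P)$, and Lemma~\ref{L:cctau} combined with Theorem~\ref{T:tau} gives $\Ann_{U(\g)} L(\lambda_A) = \Ann_{U(\g)} L(\lambda_{c \cdot A'})$, so Theorem~\ref{T:recs2} now forces $A = c \cdot A'$, whence $A' = c \cdot A = A$, contradicting $A' \notin \sTab^\cc(P)$.
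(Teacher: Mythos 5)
Your overall strategy matches the paper's: unfold the Barbasch--Vogan algorithm via Theorems~\ref{T:BGKt2} and~\ref{T:BV} to reduce finite-dimensionality to $\part(\RS(A)) \in \{(l,l,m),(l+1,l-1,m)\}$, handle the first case by Theorem~\ref{T:recs}, and then for the compatibility statement translate Losev's $\dagger$-fibres into equalities of annihilators of highest weight $U(\g)$-modules so that Theorem~\ref{T:recs2} pins down the $c$-action on s-tables. Your treatment of the compatibility claim (splitting on whether $c\cdot A=A$) is organized differently from the paper's (which splits on the value of $\part(\RS(B))$ where $c\cdot L(A)\cong L(B)$), but it is correct; note only that in the branch $c\cdot A = A$ you should record that $A$ itself lies in $\sTab^\cc(P)$ (this follows from the classification plus $c\cdot A=A$) before invoking Theorem~\ref{T:recs2}. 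Your explicit handling of the converse direction is also fine.

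The genuine gap is in the case $\part(\RS(A))=(l+1,l-1,m)$: you need to establish that $\part(\RS(c\cdot A))=(l,l,m)$, and the tools you cite do not obviously get you there. Lemma~\ref{L:largersmaller} only yields $\part(\RS(u))\le\part(\RS(w))$ when $w$ is obtained from $u$ by Knuth moves and larger--smaller transpositions, and Lemma~\ref{L:Clargersmaller} (the only place in the paper where this mechanism is connected to the $c$-action) requires $A$ to already be row equivalent to column strict, which is exactly what fails here. So ``tracking the $\sharp$-element flip'' with these two lemmas, as stated, points the inequality the wrong way; at best you can re-derive that $\part(\RS(c\cdot A))\in\{(l,l,m),(l+1,l-1,m)\}$, which you already knew. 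The paper instead proves an upper bound on the longest increasing subsequence of $\word(c\cdot A)$ directly: using the output of Lemma~\ref{L:cdefined} (that row 2 of the four-row auxiliary table $(c\cdot A)'$ consists of negative entries and best fits under row 1), one left-justifies the top two rows of $(c\cdot A)'$ and right-justifies the bottom two rows, observes that the resulting diagram is column strict, and concludes there is no increasing chain of length $l+1$, which forces $\part(\RS(c\cdot A))_1\le l$ and hence $\part(\RS(c\cdot A))=(l,l,m)$. Without some argument of this kind bounding $\part(\RS(c\cdot A))_1$ from above, the classification direction of the theorem is not actually closed, so you should supply that bound explicitly rather than defer it to ``monitoring subsequence lengths.''
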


\begin{proof}
From the proof of the previous lemma, we know that if $L(A)$ is finite dimensional,
then $\part(\RS(B))$ is  $(l,l,m)$ or $(l+1,l-1,m)$.  In the former we case we have
that $A$ is row equivalent to column strict by Theorem \ref{T:recs}. In the latter
case we can see that $c \cdot A$ is row equivalent to column strict immediately from the
previous lemma and the following observation:  Suppose
$B \in \sTab^\leq(P)$ is such that $\part(\RS(B)) = (l,l,m)$ or $(l+1,l-1,m)$
and the middle two rows of $B'$ are row equivalent to column strict.  Then
$\part(\RS(B)) = (l,l,m)$.
Indeed, if we left justify the top two rows of $B'$ and right justify the bottom two rows then the resulting diagram is column strict, so it is impossible
to find an increasing chain of length $l+1$.

Now we prove the statement about the action of $c$.
Suppose that $L(A)$ is finite dimensional and assume that
$\part(\RS(A)) = (l,l,m)$.
We have that $c \cdot L(A) \cong L(B)$ for some $B$.  If $\part(\RS(B)) = (l,l,m)$ then $A=B$ by
Theorem \ref{T:recs}, and in this case it follows that we have $c \cdot A = B$.
If $\part(\RS(B)) = (l+1,l-1,m)$ then $\part(\RS(c \cdot B)) = (l,l,m)$, and since
$L(c \cdot B)$ and $L(A)$ are associated to the same primitive ideal of $U(\g)$ by Lemma \ref{L:cctau}
and $c \cdot B$ and $A$ are both row equivalent to column strict, then it follows from
\S\ref{SS:Losev} and Theorem \ref{T:recs2} that we must have that
$A = c \cdot B$.
\end{proof}

Last in this section we give the following lemma, which we need in the proof of Theorem \ref{T:mainintro}.

\begin{Lemma} \label{L:Clargersmaller}
If $A \in \sTab^\leq(P)$ is row equivalent to column strict, then $\word(c \cdot A)$
   can be obtained from $\word(A)$ through a series of
Knuth equivalences and larger-smaller transpositions.
In particular, $\part(\RS(A)) \leq \part(\RS(c \cdot A))$.
\end{Lemma}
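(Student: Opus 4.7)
The plan is to decompose the construction of $c \cdot A$ from $A$ as the sequence $A \rightsquigarrow A' \rightsquigarrow \bar s_1 A' \rightsquigarrow c' \bar s_1 A' \rightsquigarrow \bar s_1 c' \bar s_1 A' \rightsquigarrow c \cdot A$, track the row-reading word at each stage, and show that each step alters the word either through Knuth equivalences or through a sequence of larger-smaller transpositions. Then Lemma \ref{L:largersmaller} immediately gives the partition inequality in the second sentence of the lemma.

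The passages $A \leftrightsquigarrow A'$ and $\bar s_1 c' \bar s_1 A' \leftrightsquigarrow c \cdot A$ merely split or recombine the middle row using the skew symmetry of the s-table, and one checks that they leave $\word$ unchanged because the row-reading convention (top to bottom, left to right) traverses the first and second halves of the original middle row in the same order as the new rows $2$ and $-2$ of the $4$-row s-frame. The two applications of $\bar s_1 \star$ are the key Knuth-equivalence step: the row-swap $\bar s_i \star$ is defined via the best-fitting procedure of \cite[\S4]{BG1}, and this procedure is designed precisely so that the row-reading word of the resulting s-table lies in the same Knuth class as the original (the proofs for $s_i \star$ in \cite[\S4]{BG1} and for $\bar s_i \star$ in \cite[\S5]{BG2} apply without change here).

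The main obstacle is step $c'$, where the $\sharp$-element $a$ of the upper middle row of $\bar s_1 A'$ is replaced by $-a$, with the corresponding replacement of $-a$ by $a$ in the lower middle row. Since $A$ is justified row equivalent to column strict, Theorem \ref{T:recs} gives $\part(\RS(A)) = \bp = (l,l,m)$, and the argument of Lemma \ref{L:cdefined} shows $a \geq 0$. The effect on the word is to swap the occurrences of $a$ (in the upper middle segment) and $-a$ (in the lower middle segment), followed by re-sorting of each row. I would realize this first by sliding $a$ to the right, past each entry $x$ lying between its original position and that of $-a$: each swap $a x \to x a$ is a larger-smaller transposition because the definition of the $\sharp$-element in \cite[\S6]{Bro2}, together with the skew symmetry of $\bar s_1 A'$, forces every such intervening $x$ to satisfy $x < a$. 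Once $a$ has been placed at the position originally occupied by $-a$, I would slide $-a$ to the left past the remaining intervening entries $y$; each swap $y (-a) \to (-a) y$ is larger-smaller because the mirror of the previous statement gives $y > -a$ for each such $y$.

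Combining the Knuth equivalences from the row-swap steps with the sequence of larger-smaller transpositions from the $c'$ step produces $\word(c \cdot A)$ from $\word(A)$, and Lemma \ref{L:largersmaller} then yields $\part(\RS(A)) \leq \part(\RS(c \cdot A))$. The delicate point I expect to require the most care is the verification that every intervening entry in the $c'$ step satisfies the needed strict inequality with $a$ (respectively $-a$); this reduces to a direct combinatorial check using the characterization of the $\sharp$-element in \cite[\S6]{Bro2} and the fact that after applying $\bar s_1 \star$ the two middle rows of $\bar s_1 A'$ have the same length $l$.
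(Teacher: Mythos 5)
The paper's own ``proof'' of this lemma is a single citation to \cite[Remark 5.8]{BG1}, so there is no detailed argument in the paper to compare against; your proposal is an attempt to supply the missing details, and the overall architecture you describe (decompose the passage $A \rightsquigarrow c\cdot A$ through $A'$, $\bar s_1 A'$, $c'\bar s_1 A'$, $\bar s_1 c'\bar s_1 A'$, treat the $\bar s_1\star$-steps by Knuth equivalence and the $c'$-step by transpositions) is the natural one and is almost certainly what the cited remark does. The bookkeeping at the $A\leftrightsquigarrow A'$ and $\bar s_1 c'\bar s_1 A'\leftrightsquigarrow c\cdot A$ ends is fine, as is the reduction of the second sentence to the first via Lemma~\ref{L:largersmaller}.

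The gap is in the $c'$-step, and it is precisely the point you flag at the end as ``delicate''. You assert that when you slide $a$ rightward, ``every such intervening $x$ satisfies $x < a$''. This cannot hold in the generality you state it: the intervening entries include, in particular, the entries of row $2$ of $\bar s_1 A'$ that sit to the right of $a$, and since $\bar s_1 A'\in\sTab^{\leq}$, these entries are $\geq a$, not $<a$. So either no such entries exist --- that is, $a$ is the rightmost entry of row $2$ of $\bar s_1 A'$ --- or your sliding scheme fails at the first step. (The same issue arises symmetrically with $-a$ and the entries of row $-2$ to its left.) If $a$ \emph{is} the rightmost entry of row $2$ then the argument does go through cleanly: one first performs the larger--smaller transposition $a\,(-a)\rightarrow(-a)\,a$, then slides $-a$ left past $x_{i+1},\dots,x_{l-1}$ (each $>-a$ by the choice of the insertion position $i$) and slides $a$ right past $-x_{l-1},\dots,-x_{i+1}$ (each $<a$ for the same reason), arriving at exactly the sorted rows of $c'\bar s_1 A'$. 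What is missing is the verification that, under the hypothesis that $A$ is row equivalent to column strict (so $\part(\RS(A))=(l,l,m)$ by Theorem~\ref{T:recs}), the $\sharp$-element of row $2$ of $\bar s_1 A'$ is in fact its maximal entry; this must be extracted from the definition in \cite[\S6]{Bro2} together with the structure of row $2$ established in the proof of Lemma~\ref{L:cdefined} (where one sees row $2$ is built from $a_1'',\dots,a_{l-m}'', b_1,\dots,b_{m/2}, a_{m/2+1}',\dots,a_m'$). Without that verification the key inequality is not justified, and with $\sharp$-elements that are not maximal --- which do occur in other cases, e.g.\ the second case of Lemma~\ref{L:cdefined} where the $\sharp$-element is \emph{negative} --- the sliding argument as written would break down.

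A secondary imprecision: you describe sliding $a$ ``to the position originally occupied by $-a$'' and then sliding $-a$ ``past the remaining intervening entries''. After $c'$ and re-sorting, $a$ and $-a$ do not end up at each other's original positions; $-a$ lands among the $\leq 0$ entries of row $2$ (between $x_i$ and $x_{i+1}$) and $a$ lands at the mirror position in row $-2$. Your sliding scheme should be stated so that it terminates at those final positions, not at each other's starting positions; otherwise an additional re-sorting step is silently absorbed without justification.

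Finally, for the Knuth-equivalence claim in the $\bar s_1\star$-steps, it would be worth being explicit that what you are really using is that $s_i\star$ (and hence $\bar s_i\star$) preserves $\RS$, so that the reading words lie in the same Knuth class; this is established in \cite[\S4]{BG1} and \cite[\S5]{BG2} but is not quite the same as saying the best-fitting procedure is ``designed precisely'' for this --- it is a theorem, not a definition.
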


\begin{proof}
   This is proven in \cite[Remark 5.8]{BG1}.
\end{proof}

\section{The 3 row case for $\g = \so_{2n+1}$} \label{S:Bcase}
Let $\g = \so_{2n}$ and suppose that $\bp$ has three parts.
Then we write $\bp = (l^2,m)$, where $l$ must be odd if $l > m$.
In this section we classify the finite dimensional $U(\g,e)$-modules, and we
use the $\tau$-equivalence to
describe the component group action on these modules.

Let $C$ be the component group of $e$, so
\[
C = \begin{cases}
     \lan c \ran \cong \Z_2 & \text{if $l$ is odd and $l \ne m$;} \\
     1 & \text{otherwise}.
    \end{cases}
\]

The lemma below deals with the (easy) cases, where $l > m$ (in which case $l$ must be odd), and $l \le m$ and is even. The proof is very similar to that of Lemma \ref{L:recsC} so is omitted.

\begin{Lemma} \label{L:recsB}
Suppose that $l$ is even,
or $l$ is odd and $l \ge m$.
Let $A \in \sTab^\leq(P)$.
Then
$L(A)$ is finite dimensional if and only if $A$ is justified row equivalent to column strict.
 Furthermore in the case that $l$ is odd and $l > m$, if $L(A)$ is finite dimensional, then
   $c \cdot L(A) \cong L(A)$.
\end{Lemma}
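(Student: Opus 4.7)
The plan is to mirror the proof of Lemma~\ref{L:recsC}. By Theorem~\ref{T:BGKt2} combined with Theorem~\ref{T:BV}, $L(A)$ is finite dimensional if and only if $\BV(\lambda_A) = \bp$, and since the output of the Barbasch--Vogan algorithm depends only on the content of $\part(\RS(\word(A)))$, the finiteness condition reduces to showing that $\bp$ is the unique partition of $2n+1$ whose content equals $\content(\bp)$. Combined with Theorem~\ref{T:recs}(i), this then translates directly into the first assertion of the lemma.

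The hypothesis ``$l$ even, or $l$ odd with $l \ge m$'' breaks into three sub-cases that I would handle in turn: (i) $l$ even and $l < m$ (with $m$ necessarily odd); (ii) $l$ odd and $l > m$; and (iii) $l$ odd and $l = m$. The remaining possibility ``$l$ even with $l \ge m$'' does not arise, as it either violates the type B parity requirement on multiplicities in $\bp$ or is excluded by the specialness condition ``$p_i$ odd for $i \ge d$'' forcing $l$ to be odd. In each surviving sub-case I compute the triple $(q_1, q_2+1, q_3+2)$ for $\bp$ sorted in non-decreasing order, separate it into evens and odds to read off $\content(\bp)$, and then enumerate the partitions $\bq$ of $2l+m$ whose content agrees. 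The essential combinatorial observation is that the three entries of $(q_1, q_2+1, q_3+2)$ are strictly distinct, so among the candidate preimages obtained by interpreting each content value as s-type (value $2s$) or t-type (value $2t+1$), only one produces a distinct triple summing to $2l + m + 3$, and this assignment forces $\bq = \bp$. Padded 2-part partitions are excluded because they introduce $0$ into the content, which does not occur in $\content(\bp)$ except in the degenerate boundary cases $m = 1$ or $l = m = 1$, which are checked directly.

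For the component group claim in sub-case (ii), suppose $L(A)$ is finite dimensional. Then $c \cdot L(A) \cong L(B)$ for some $B \in \sTab^\le(P)$ with $L(B)$ finite dimensional, and by property (2) of Losev's map together with Theorem~\ref{T:BGKt3} we have $\Ann_{U(\g)} L(\lambda_A) = \Ann_{U(\g)} L(\lambda_B)$. Both $A$ and $B$ are justified row equivalent to column strict by the first part, so Theorem~\ref{T:recs2} forces $A = B$, and hence $c \cdot L(A) \cong L(A)$.

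The main obstacle I expect is the bookkeeping in the content enumeration, particularly the degenerate boundary cases where entries of the content coincide or equal $0$. One also needs to confirm that the type B convention of using $\BV'$ (where an extra $0$ is inserted into the input to $\RS$) rather than $\BV$ does not create additional partitions with matching content that would be missed by the enumeration above, but this is precisely what Corollary~\ref{C:BV} guarantees.
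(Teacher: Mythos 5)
Your proposal takes essentially the same route as the paper, which states that the proof is ``very similar to that of Lemma~\ref{L:recsC}'': reduce via Theorems~\ref{T:BGKt2} and \ref{T:BV} to the combinatorial claim that $\bp$ is the unique partition with content $\content(\bp)$, verify this by a case analysis on $l$ versus $m$, and then apply Theorems~\ref{T:recs} and \ref{T:recs2}. One slip worth fixing: in the component-group step, passing from $c \cdot L(A) \cong L(B)$ to $\Ann_{U(\g)} L(\lambda_A) = \Ann_{U(\g)} L(\lambda_B)$ relies on property~(4) of Losev's map from \S\ref{SS:Losev} (the fibres of $\cdot^\dagger$ on $\Prim_0 U(\g,e)$ are $C$-orbits, so $C$-conjugate primitive ideals of $U(\g,e)$ have the same $\dagger$-image) combined with Theorem~\ref{T:BGKt3}, not property~(2), which only records compatibility with Skryabin's equivalence and says nothing about the $C$-action.
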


So we are left to consider the case where $l$ is odd and $m >l$, in this case we let $l = 2p+1$ and $m = 2q+1$, where $q >p$.
In the next few paragraphs we set up the combinatorics to describe the action
of $c$ on elements of $\sTab^\le(P)$ corresponding to finite dimensional
representations.

Let $A \in \sTab^\le(P)$.
Let $a_1, \dots, a_{2p +1}$ be the top row  of $A$, and let
$b_1, \dots, b_q,0,-b_q, \dots, -b_1$ be the middle row.
We define two tables from $A$, $A^{L^+}$ and $A^{L^-}$, in the following manner.
$A^{L^+}$ is the left justified 3 row table with row $1$ equal to
$a_1, \dots, a_{p+1}$, with row $2$ equal to
$b_1, \dots, b_q$, and with row $3$ equal to
$-a_{2p+1}, \dots, -a_{p+2}$.
$A^{L^-}$ is the left justified 3 row table with row $1$ equal to
$a_1, \dots, a_{p}$, with row $2$ equal to
$b_1, \dots, b_q$, and with row $3$ equal to
$-a_{2p+1}, \dots, -a_{p+1}$.

We define the $C$-action on $A$ in the following manner depending on cases below.  Here we use the
row swapping operations for tables mentioned in \S\ref{ss:rowswap}, and we omit the $\star$ in the notation
for convenience.

\noindent Case 1: If $A^{L^-}$ is row equivalent to column strict then we define
$c \cdot A = B$, where $B$ is the unique stable in $\sTab^{\le}(P)$
such that
$B^{L^+} = s_2 s_1 s_2 A^{L^-}$.

\noindent Case 2: If $A^{L^-}$ is not row equivalent to column strict, but $A^{L^+}$ is row equivalent to column strict then we define
$c \cdot A = B$, where $B$ is the unique stable in $\sTab^{\le}(P)$
such that
$B^{L^-} = s_2 s_1 s_2 A^{L^+}$, provided that such an s-table exists; note that $B$ exists precisely when $s_1 s_2 A^{L^+}$ contains only negative numbers
in row $2$, and this will {\em not} happen if
$A^{L^-}$ is row equivalent to column strict.  If such $B$ does not exist, then we say that
$c \cdot A$ is not defined.

\noindent Case 3: If neither $A^{L^-}$ nor $A^{L^+}$ is row equivalent to column strict, then we say that $c \cdot A$
is undefined.

For example,
   suppose that
\[
  A =
\begin{array}{c}
   \begin{picture}(100,60)
    \put(20,0){\line(1,0){60}}
    \put(0,20){\line(1,0){100}}
    \put(0,40){\line(1,0){100}}
    \put(20,60){\line(1,0){60}}
    \put(0,20){\line(0,1){20}}
    \put(20,0){\line(0,1){60}}
    \put(40,0){\line(0,1){60}}
    \put(60,0){\line(0,1){60}}
    \put(80,0){\line(0,1){60}}
    \put(100,20){\line(0,1){20}}

\put(28,10){\makebox(0,0){{-6}}}
\put(48,10){\makebox(0,0){{-5}}}
\put(70,10){\makebox(0,0){{2}}}
\put(8,30){\makebox(0,0){{-3}}}
\put(28,30){\makebox(0,0){{-1}}}
\put(50,30){\makebox(0,0){{0}}}
\put(70,30){\makebox(0,0){{1}}}
\put(90,30){\makebox(0,0){{3}}}
\put(28,50){\makebox(0,0){{-2}}}
\put(50,50){\makebox(0,0){{5}}}
\put(70,50){\makebox(0,0){{6}}}
   \end{picture}
\end{array}.
\]
Then
\[
 A^{L^+} =
\begin{array}{c}
\begin{picture}(40,60)
 \put(0,0){\line(1,0){20}}
 \put(0,20){\line(1,0){40}}
 \put(0,40){\line(1,0){40}}
 \put(0,60){\line(1,0){40}}
 \put(0,0){\line(0,1){60}}
 \put(20,0){\line(0,1){60}}
 \put(40,20){\line(0,1){40}}
\put(8,10){\makebox(0,0){{-6}}}
\put(8,30){\makebox(0,0){{-3}}}
\put(28,30){\makebox(0,0){{-1}}}
\put(8,50){\makebox(0,0){{-2}}}
\put(30,50){\makebox(0,0){{5}}}
\end{picture}
\end{array}
\qquad \text{and} \qquad
 A^{L^-} =
\begin{array}{c}
\begin{picture}(40,60)
 \put(0,0){\line(1,0){40}}
 \put(0,20){\line(1,0){40}}
 \put(0,40){\line(1,0){40}}
 \put(0,60){\line(1,0){20}}
 \put(0,0){\line(0,1){60}}
 \put(20,0){\line(0,1){60}}
 \put(40,0){\line(0,1){40}}
\put(8,10){\makebox(0,0){{-6}}}
\put(28,10){\makebox(0,0){{-5}}}
\put(8,30){\makebox(0,0){{-3}}}
\put(28,30){\makebox(0,0){{-1}}}
\put(8,50){\makebox(0,0){{-2}}}
\end{picture}
\end{array}.
\]

Since $A^{L^-}$ is column strict,
we are in Case 1.  Now
\[
 s_2 s_1 s_2 A^{L^-} =
\begin{array}{c}
\begin{picture}(40,60)
 \put(0,0){\line(1,0){20}}
 \put(0,20){\line(1,0){40}}
 \put(0,40){\line(1,0){40}}
 \put(0,60){\line(1,0){40}}
 \put(0,0){\line(0,1){60}}
 \put(20,0){\line(0,1){60}}
 \put(40,20){\line(0,1){40}}
\put(8,10){\makebox(0,0){{-5}}}
\put(8,30){\makebox(0,0){{-6}}}
\put(28,30){\makebox(0,0){{-3}}}
\put(8,50){\makebox(0,0){{-2}}}
\put(28,50){\makebox(0,0){{-1}}}
\end{picture}
\end{array},
\]
so
\[
  c \cdot A =
\begin{array}{c}
   \begin{picture}(100,60)
    \put(20,0){\line(1,0){60}}
    \put(0,20){\line(1,0){100}}
    \put(0,40){\line(1,0){100}}
    \put(20,60){\line(1,0){60}}
    \put(0,20){\line(0,1){20}}
    \put(20,0){\line(0,1){60}}
    \put(40,0){\line(0,1){60}}
    \put(60,0){\line(0,1){60}}
    \put(80,0){\line(0,1){60}}
    \put(100,20){\line(0,1){20}}

\put(28,10){\makebox(0,0){{-5}}}
\put(50,10){\makebox(0,0){{1}}}
\put(70,10){\makebox(0,0){{2}}}
\put(8,30){\makebox(0,0){{-6}}}
\put(28,30){\makebox(0,0){{-3}}}
\put(50,30){\makebox(0,0){{0}}}
\put(70,30){\makebox(0,0){{3}}}
\put(90,30){\makebox(0,0){{6}}}
\put(28,50){\makebox(0,0){{-2}}}
\put(48,50){\makebox(0,0){{-1}}}
\put(70,50){\makebox(0,0){{5}}}
   \end{picture}
\end{array}.
\]

We need to prove that $\word(A)$ is $\tau$-equivalent to $\word(c \cdot A)$.  To do this we need the following lemmas.

\begin{Lemma} \label{L:flat1}
	Let $a, b_1, \dots, b_m$ be such that $a > 0$, $b_1 < \dots < b_m < 0$ and $-a < b_m$.
	Then
	\[
	(a,b_1, \dots, b_m) \sim^\tau
	(b_1, , \dots, b_m, -a).
	\]
\end{Lemma}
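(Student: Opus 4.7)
The plan is to verify the equivalence by working from $(b_1,\dots,b_m,-a)$ back toward $(a,b_1,\dots,b_m)$. Going in this direction is essentially forced by the structure of the generating relations: at the tail of $(a,b_1,\dots,b_m)$ the last two entries are $b_{m-1}<b_m<0$, so (R3) does not apply (same sign) and (R2) does not apply ($|b_{m-1}|>|b_m|$ rather than the reverse), which blocks all direct moves at the right end of the left-hand side.

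The first move applies (R2) at the final position. Since $b_m$ and $-a$ are both negative, the assumption $-a<b_m$ is equivalent to $|b_m|<a=|{-a}|$, so
\[
(b_1,\dots,b_m,-a)\sim^\tau(b_1,\dots,b_m,a).
\]
The last two entries are now opposite in sign, so (R3) yields
\[
(b_1,\dots,b_m,a)\sim^\tau(b_1,\dots,b_{m-1},a,b_m).
\]
From here I would migrate $a$ leftward through the $b_i$ by repeated Knuth equivalences. For each $i=m-1,m-2,\dots,1$, the three-term window $(b_i,a,b_{i+1})$ satisfies $b_i\le b_{i+1}<a$, which is precisely the Knuth condition $x\le y<z$ permitting the swap $xzy\equiv zxy$ and sending $a$ one slot to the left. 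After $m-1$ such moves the word is $(a,b_1,\dots,b_m)$, completing the chain of $\tau$-equivalences.

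No step presents a real obstacle: the sign flip and swap at the tail use the hypotheses in an essentially tautological way, and the Knuth hypotheses reduce to the given chain $b_1<\cdots<b_m<0<a$. If there is an insight at all, it is the recognition that the argument has to be organised from right to left rather than the opposite direction.
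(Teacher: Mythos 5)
Your proof is correct and is essentially the paper's proof read in the reverse direction: the paper passes from $(a,b_1,\dots,b_m)$ to $(b_1,\dots,b_{m-1},a,b_m)$ by citing the Robinson--Schensted algorithm, then applies (R3) and (R2), whereas you start from $(b_1,\dots,b_m,-a)$, apply (R2) then (R3), and finish with the same Knuth equivalence, which you helpfully spell out as a chain of elementary (K'') moves. The underlying chain of relations is identical, so the two arguments are the same in substance.
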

\begin{proof}
	By applying the Robinson--Schensted Algorithm we see that
	$(a,b_1, \dots, b_m)$ is Knuth equivalent to
	$(b_1, \dots, b_{m-1}, a, b_m)$.
	By applying the relations (R3) then (R2) from the definition of the $\tau$-equivalence
	we get that this is $\tau$-equivalent to
	$(b_1, \dots, b_m,-a)$.
\end{proof}

For positive integers $k,m$ we define an operation $\LT_{k,m}$ on certain lists.
Suppose that  $(a_1, \dots, a_l, b_1, \dots, b_m)$
is a list such that
$l \ge 2k-1$,
$m \ge k$, $b_m < 0$, $a_{l-k} > 0$,
and
the table
\begin{equation} \label{EQ:LT}
B =
\begin{array}{|c|c|c|c|c|c|c|}
	\cline {1-4}
	a_{l-2k+2} & a_{l-2k+1} & \dots & a_{l-k} \\
	\hline
	b_1 & b_2 & \dots & b_{k-1} & b_k & \dots & b_m \\
	\hline
	-a_l & -a_{l-1} & \dots & -a_{l-k+2} & -a_{l-k+1} \\
	\cline{1-5}
\end{array}
\end{equation}
is row equivalent to column strict with increasing rows.
We define
	$\LT_{k,m} (a_1, \dots, a_{l}, b_1, \dots, b_m)$
	to be the list
	$(a_1, \dots, a_{l-2k+1})$
	concatenated with $\word(B)$.
For example,
if $A \in \sTab^\leq(P)$ is justified row equivalent to column strict, $P$ has row lengths $(2p+1,2q+1,2p+1)$,
and
\[
\word(A) = (a_1, \dots, a_{2 p+1}, b_1, \dots, b_{2q+1},0,-b_{2q+1}, \dots, -b_1, -a_{2p+1}, \dots, -a_1),
\]
then
$\LT_{p+1,2q+1}(a_1, \dots, a_{2p+1}, b_1, \dots, b_{2q+1})= \word(A^{L^-})$.

We would also like to explicitly describe $\LT_{k,m}^{-1}$.
This will be defined on lists of the form
$(a_1, \dots, a_l, b_1, \dots, b_m, c_1, \dots, c_k)$
where
$m \ge k$, $l \ge k-1$,
$c_k < 0$, $-c_k > a_l$, $b_m < 0$,
and the following table is row equivalent to column strict with increasing rows:
\begin{equation} \label{EQ:LT2}
B =
\begin{array}{|c|c|c|c|c|c|c|}
	\cline {1-4}
	a_{l-k+2} & a_{l-k+1} & \dots & a_{l} \\
        \hline
	b_1 & b_2 & \dots & b_{k-1} & b_k & \dots & b_m \\
	\hline
	c_1 & c_2 & \dots & c_{k-1} & c_k  \\
	\cline {1-5}
\end{array}.
\end{equation}
Now
$\LT_{k,m}^{-1}(a_1, \dots, a_l, c_1, \dots, c_k, b_1, \dots, b_m)
=(a_1, \dots, a_{l}, -c_k, \dots, -c_1, b_1, \dots, b_m)$.
If any of the above conditions are not met then we say that
$\LT_{k,m}^{-1}(a_1, \dots, a_l, b_1, \dots, b_m, c_1, \dots, c_k)$ is undefined.

\begin{Lemma} \label{L:flat2}
Let
$(a_1, \dots, a_l, b_1, \dots, b_m)$
be a list on which $\LT_{k,m}$ is defined.
Then
\[
(a_1, \dots, a_l, b_1, \dots, b_m) \sim^\tau
\LT_{k,m}(a_1, \dots, a_l, b_1, \dots, b_m).
\]
\end{Lemma}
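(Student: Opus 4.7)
I plan to prove Lemma~\ref{L:flat2} by induction on $k$.

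\textbf{Base case $k=1$.} Row~1 of $B$ is empty, Row~3 is the single entry $-a_l$, and Row~2 is $(b_1,\dots,b_m)$. Because $B$ is row equivalent to column strict, some permutation of Row~2 places an entry greater than $-a_l$ in column~1; since the $b_i$ are increasing and $b_m$ is the largest, this forces $-a_l<b_m$, and hence $a_l>-b_m>0$. These are exactly the hypotheses of Lemma~\ref{L:flat1} applied to the suffix $(a_l,b_1,\dots,b_m)$, giving
\[
(a_l,b_1,\dots,b_m)\sim^\tau(b_1,\dots,b_m,-a_l),
\]
and concatenating the common prefix $(a_1,\dots,a_{l-1})$ yields the claim.

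\textbf{Inductive step.} Assuming the result for $k-1$, I first apply Lemma~\ref{L:flat1} to move $a_l$ past the $b$'s, obtaining $(a_1,\dots,a_{l-1},b_1,\dots,b_m,-a_l)$. I then apply $\LT_{k-1,m}$ to the prefix $(a_1,\dots,a_{l-1},b_1,\dots,b_m)$, leaving the trailing $-a_l$ as a spectator. The hypotheses of $\LT_{k-1,m}$ hold because the sub-table $B''$ obtained from $B$ by deleting the leftmost cell of Row~1 and Row~3 is row equivalent to column strict by truncation of the column-strict arrangement of $B$. The inductive hypothesis then produces the $\tau$-equivalence to
\[
(a_1,\dots,a_{l-k},b_1,\dots,b_m,-a_{l-1},\dots,-a_{l-k+1},-a_l).
\]
What remains is to reorder the final $k$ negated entries into the target arrangement $(-a_l,-a_{l-1},\dots,-a_{l-k+1})$.

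\textbf{Main obstacle.} The reordering $(-a_{l-1},\dots,-a_{l-k+1},-a_l)\to(-a_l,-a_{l-1},\dots,-a_{l-k+1})$ cannot be effected by Knuth equivalence alone (the underlying RS tableaux differ). My plan is to combine the end-of-word relations with Knuth bubbling: apply (R2) to flip the trailing $-a_l$ to $a_l$ (valid since $a_l>a_{l-k+1}$), apply (R3) to swap $a_l$ with the preceding $-a_{l-k+1}$ (opposite signs), and then repeatedly apply Knuth moves of the form $(-a_{l-i},a_l,-a_{l-i+1})\sim(a_l,-a_{l-i},-a_{l-i+1})$ (valid since $-a_{l-i}\le -a_{l-i+1}<a_l$) to bubble $a_l$ leftward through the $-a$-block. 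The delicate final step occurs when $a_l$ reaches the position immediately right of $b_m$: a direct K1 move on $(b_m,a_l,-a_{l-1})$ is blocked because $b_m>-a_{l-1}$ (from the column-strict arrangement of $B$). Here I expect the argument to require a local reapplication of the Lemma~\ref{L:flat1} sequence in reverse at the $b_m$--$a_l$ interface, so as to convert $(\dots,b_m,a_l,-a_{l-1},\dots)$ to $(\dots,b_m,-a_l,-a_{l-1},\dots)$. This boundary step is the portion of the proof I anticipate will demand the most care, and may well be the place where an auxiliary lemma extending Lemma~\ref{L:flat1} to a near-monotone tail is needed.
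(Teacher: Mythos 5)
Your base case is correct and follows the paper's argument exactly. The inductive step, however, contains a gap that does not appear to be repairable within the plan as stated.

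The problem is the ``spectator'' move. After arriving at $(a_1,\dots,a_{l-1},b_1,\dots,b_m,-a_l)$, you apply the inductive hypothesis for $\LT_{k-1,m}$ to the initial segment $(a_1,\dots,a_{l-1},b_1,\dots,b_m)$, treating the trailing $-a_l$ as inert. This is not a legitimate step in the $\tau$-equivalence calculus. The equivalence $(a_1,\dots,a_{l-1},b_1,\dots,b_m)\sim^\tau \LT_{k-1,m}(a_1,\dots,a_{l-1},b_1,\dots,b_m)$ is established (via Lemma~\ref{L:flat1}) using the relations (R2) and (R3), which act on the \emph{final} coordinates of the weight. Consequently, knowing $w\sim^\tau w'$ does not allow you to conclude $(w,c)\sim^\tau(w',c)$ for an appended letter $c$: only the Knuth moves (R1) commute with a fixed trailing entry, while (R2) and (R3) would now be applied to $c$ rather than to the last letter of $w$. (The reason Lemma~\ref{L:flat1} can safely be applied to a \emph{suffix} is precisely that a fixed prefix commutes with (R1), (R2), (R3); a fixed suffix does not.) So the chain breaks at exactly the step where you bring in the inductive hypothesis. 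You also flag a second, unresolved difficulty in the final reordering of $(-a_{l-1},\dots,-a_{l-k+1},-a_l)$ to $(-a_l,\dots,-a_{l-k+1})$, and indeed the ``Knuth move'' you quote there uses the inequality $-a_{l-i}\le -a_{l-i+1}$, which is reversed since the $a_i$ are increasing.

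The paper's proof avoids the spectator problem entirely by applying $\LT_{k-1,m}$ to the \emph{same} word $(a_1,\dots,a_l,b_1,\dots,b_m)$, lowering $k$ rather than shrinking the word. This produces $(a_1,\dots,a_{l-k+1},b_1,\dots,b_m,-a_l,\dots,-a_{l-k+2})$ with no trailing passenger; the single remaining element to negate, $a_{l-k+1}$, sits in the interior, is moved toward the end by genuine Knuth (best-fitting) manipulations, negated at the actual end of the word via Lemma~\ref{L:flat1}, and then the Knuth moves are undone. In effect the paper peels off the entries $a_l,\dots,a_{l-k+1}$ to be negated from the inside out; your plan peels them from the outside in, and the very first peel creates the trailing $-a_l$ that the $\tau$-relations cannot see past.
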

\begin{proof}
We may assume that $l = 2k-1$.
	We proceed by induction on $k$,
The case $k=1$ is given by
	Lemma \ref{L:flat1}.
Now since
\begin{equation} \label{EQ:hLT}
\begin{array}{|c|c|c|c|c|c|c|}
	\cline {1-4}
	a_{1} & a_{2} & \dots & a_{k-1} \\
	\hline
	b_1 & b_2 & \dots & b_{k-1} & b_k & \dots & b_m \\
	\hline
	-a_{2k-1} & -a_{2k-2} & \dots & -a_{k+1} & -a_{k} \\
	\cline{1-5}
\end{array}
\end{equation}

is row equivalent to column strict, we also have that
\[
\begin{array}{|c|c|c|c|c|c|c|}
	\cline {1-4}
	a_{3} & a_{4} & \dots & a_{k} \\
	\hline
	b_1 & b_2 & \dots & b_{k-2} & b_{k-1} & \dots & b_m \\
	\hline
	-a_{2k-1} & -a_{2k-2} & \dots & -a_{k+2} & -a_{k+1} \\
	\cline{1-5}
\end{array}
\]
is row equivalent to column strict.
 So by induction,
        $(a_1, \dots, a_l, b_1, \dots, b_m)$ is $\tau$-equivalent to
        $\LT_{k-1,m}(a_1, \dots, a_l) =
(a_1, \dots, a_k, b_1, \dots, b_m, -a_{2k-1}, \dots, -a_{k+1})$.
Now let $b_{i_1}, \dots, b_{i_{k-1}}$ be the elements of $b_1, \dots, b_m$ which best fit
over $-a_{2k-1}, \dots, -a_{k+1}$.
Thus
\begin{equation} \label{EQ:hLT2}
  (a_1, \dots, a_k, b_1, \dots, b_m, -a_{2k-1}, \dots, -a_{k+1}) \sim^K
(a_1, \dots, a_k, b_{i_1}, \dots, b_{i_{k-1}}, a'_1, \dots, a'_m),
\end{equation}
where $(a'_1, \dots, a'_m)$ is the sorted list consisting of
$-a_{2k-1}, \dots, -a_{k+1}$ and $\{b_l \mid l \neq i_j$ for $j=1,\dots, k-1\}$, and
$\sim^K$ denotes Knuth equivalence.
Now from \eqref{EQ:hLT} we can see that
$b_{i_1}, \dots, b_{i_{k-1}}$ best fits under $a_1, \dots, a_{k-1}$, so
\[
(a_1, \dots, a_k, b_{i_1}, \dots, b_{i_{k-1}}, a'_1, \dots, a'_m) \sim^K
(a_1, \dots, a_{k-1}, b_{i_1}, \dots, b_{i_{k-1}}, a_k, a'_1, \dots, a'_m).
\]
We also get from \eqref{EQ:hLT} that $a'_m = -b_m$,
so by Lemma \ref{L:flat1} we have that
\[
(a_1, \dots, a_{k-1}, b_{i_1}, \dots, b_{i_{k-1}}, a_k, a'_1, \dots, a'_m) \sim^\tau
(a_1, \dots, a_{k-1}, b_{i_1}, \dots, b_{i_{k-1}}, a'_1, \dots, a'_m, -a_k).
\]
Finally we can use the Knuth equivalence in \eqref{EQ:hLT2} to get that this is
Knuth equivalent to
\[
(a_1, \dots, a_{k-1}, b_{1}, \dots, b_{m}, -a_{2k-1}, \dots, -a_k).
\]
\end{proof}

\begin{Lemma} \label{L:tech1}
 Suppose that we are given a skew symmetric word
\[
w = (a,b_1, \dots, b_m, c,0,-c,-b_m,\dots,-b_1,-a)
\]
such that $\part(\RS(a,b_1,\dots,b_m,c)) = (m,1,1)$, $b_1 < b_2 < \dots < b_m < 0$, $c <0$,
and $-c > a$.
Then
\[
w \sim^K
(a,b_1, \dots, b_m, -c,0,c,-b_m,\dots,-b_1,-a).
\]
\end{Lemma}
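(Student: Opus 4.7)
My plan is to show that both sides of the claimed equivalence have the same Robinson--Schensted insertion tableau, which by the standard characterization of Knuth equivalence implies $w \sim^K w'$. The key observation is that after the first $m+5$ insertion steps the two intermediate tableaux already agree, and the two words share the common suffix $(-b_{m-1},\ldots,-b_1,-a)$, so the remaining insertions produce identical final tableaux.

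First I will extract from $\part(\RS(a,b_1,\ldots,b_m,c)) = (m,1,1)$, combined with $b_1<\cdots<b_m<0$ and $c<0$, the inequalities $a>b_1$ and $c<b_1$; otherwise the shape of $\RS(a,b_1,\ldots,b_m,c)$ would be $(m+1,1)$ or $(m,2)$ rather than $(m,1,1)$. Inserting $(a,b_1,\ldots,b_m,c)$ then produces the tableau with row~$1=(c,b_2,\ldots,b_m)$, row~$2=(b_1)$, row~$3=(a)$, since $a$ is bumped to row~$2$ by $b_1$, each $b_i$ for $i\ge 2$ appends to row~$1$, and finally $c$ bumps $b_1$, which in turn bumps $a$ to row~$3$.

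Next, for the left-hand word $w$ I will insert the three entries $0,\,-c,\,-b_m$: both $0$ and $-c$ append to row~$1$ since $-c>0>b_m$, while $-b_m$ then bumps $-c$ into row~$2$ using $b_m>c$ (so $-b_m<-c$), with $-c$ appending because $-c>0>b_1$. For the right-hand word $w'$ I will insert $-c,\,0,\,c,\,-b_m$ after the common prefix $(a,b_1,\ldots,b_m)$, whose partial tableau has row~$1=(b_1,\ldots,b_m)$ and row~$2=(a)$. The $-c$ appends to row~$1$; then $0$ bumps $-c$ into row~$2$, where it appends because $-c>a$ (the crucial use of that hypothesis); then $c$ bumps $b_1$, which in turn bumps $a$ into row~$3$; finally $-b_m$ appends to row~$1$. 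A direct comparison will then show that both intermediate tableaux equal
\[
\text{row } 1 = (c,b_2,\ldots,b_m,0,-b_m),\quad \text{row } 2 = (b_1,-c),\quad \text{row } 3 = (a).
\]

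Since the remaining tail $(-b_{m-1},\ldots,-b_1,-a)$ is identical for $w$ and $w'$, this agreement propagates and yields $\RS(w)=\RS(w')$. The main subtle point is to ensure that $-c>a$ is used at exactly the right moment in the right-hand computation to force $-c$ to append to row~$2$ rather than bumping $a$ further down; this is the unique non-obvious step. No case analysis on the relative position of $-a$ and the $b_i$'s is needed, because the two tableaux are shown to coincide before the tail is inserted.
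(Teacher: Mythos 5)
Your overall strategy is the same as the paper's: compute $\RS$ of the two length-$(m+5)$ prefixes $(a,b_1,\dots,b_m,c,0,-c,-b_m)$ and $(a,b_1,\dots,b_m,-c,0,c,-b_m)$, observe that they coincide, and let the common suffix $(-b_{m-1},\dots,-b_1,-a)$ propagate. However, your opening deduction "$c<b_1$" does not follow from the hypotheses, and the rest of the argument is built on it. From $\part(\RS(a,b_1,\dots,b_m,c))=(m,1,1)$ one does get $a>b_1$, $c<b_m$ and $a>c$, but nothing forces $c$ below $b_1$. Concretely, take $m=2$ and $(a,b_1,b_2,c)=(-1,-3,-2,-\tfrac52)$: then $b_1<c<b_2$, yet $\RS(-1,-3,-2,-\tfrac52)$ has shape $(2,1,1)$ and all hypotheses, including $-c>a$, hold. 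For these values your claimed intermediate tableau (row $1=(-\tfrac52,-2,0,2)$, row $2=(-3,\tfrac52)$, row $3=(-1)$) is not even column-increasing, whereas the true common intermediate tableau for both prefixes is row $1=(-3,-\tfrac52,0,2)$, row $2=(-2,\tfrac52)$, row $3=(-1)$.

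What the shape condition actually gives is that $c$ lies in $(b_j,b_{j+1})$ for some $0\le j\le m-1$ (with $b_0=-\infty$), and that the bumped element $b_{j+1}$ satisfies $b_{j+1}<a$; you have implicitly fixed $j=0$. The comparison does still succeed for general $j$ — both prefix tableaux come out to row $1=(b_1,\dots,b_j,c,b_{j+2},\dots,b_m,0,-b_m)$, row $2=(b_{j+1},-c)$, row $3=(a)$, and the hypothesis $-c>a$ is used exactly where you identified. So the gap is not in the plan but in the unjustified reduction to $c<b_1$: as written, the calculation covers only that special case, and the statement "no case analysis is needed" is precisely what fails.
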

\begin{proof}
Calculate $\RS(a,b_1, \dots, b_m, c,0,-c,-b_m)$, and
$\RS(a,b_1, \dots, b_m,-c,0,c,-b_m)$, then observe that they are equal.
\end{proof}

\begin{Lemma} \label{L:LT}
Suppose that we are given a skew symmetric word
\[
w = (a_1, \dots, a_l,b_1, \dots, b_m, c_1, \dots,c_k,0,-c_k, \dots, -c_1,-b_m,\dots,-b_1,-a_l, \dots, -a_1)
\]
such that
$k \leq l \leq m$,
$a_1 < a_2 < \dots < a_l, b_1 < b_2 < \dots < b_m < 0, c_1 < c_2 < \dots < c_k < 0$, $-c_k > a_l$, and
$\part(\RS(a_1, \dots, a_l,b_1, \dots, b_m, c_1, \dots, c_k)) = (m,l,k)$.
Then
\[
 w \sim^K
(a_1, \dots, a_l,b_1, \dots, b_m, -c_k, \dots,-c_1,0,c_1, \dots, c_k,-b_m,\dots,-b_1,-a_l, \dots, -a_1)
\]
and
\[
(a_1, \dots, a_l,b_1, \dots, b_m, c_1, \dots,c_k) \sim^\tau
(a_1, \dots, a_l,b_1, \dots, b_m, -c_k, \dots,-c_1).
\]
\end{Lemma}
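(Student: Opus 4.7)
My plan is to prove both parts by induction on $k$, with the base case $k = 1$ being essentially Lemma~\ref{L:tech1} for part~(i) and a straightforward application of the $\tau$-move relations for part~(ii).

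For the Knuth equivalence in part~(i), I would first observe that $w$ and $w'$ share the common suffix $(-a_l, \dots, -a_1)$, so it suffices to show that the prefixes of $w$ and $w'$ up to and including $-b_1$ produce the same Robinson--Schensted tableau. Direct calculation verifies this in small cases (for instance with $l = m = k = 2$), but the general argument proceeds by induction on $k$. For $k = 1$, the computation is essentially that of Lemma~\ref{L:tech1}: the extra insertions of $a_1, \dots, a_{l-1}$ at the start deposit these values into the second row of the partial tableau without affecting the middle bumping, and the further insertions of $-b_{m-1}, \dots, -b_1$ act identically on both tableaux once the middle-segment insertions are complete. For the inductive step, the partition hypothesis $\part(\RS(a_1, \dots, a_l, b_1, \dots, b_m, c_1, \dots, c_k)) = (m, l, k)$ together with $-c_k > a_l$ constrain the structure tightly enough that the outer pair $c_1, -c_1$ of the middle segment can be peeled off: the insertions of $c_1$ on the left side and $-c_1$ on the right side produce matching tableau modifications, reducing the problem to the same statement for $k - 1$ with the shortened middle segment $(c_2, \dots, c_k, 0, -c_k, \dots, -c_2)$ and partition $(m, l, k-1)$.

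For the $\tau$-equivalence in part~(ii), I would use the generators (R1)--(R3) directly. The key step is to apply a Knuth rearrangement~(R1) that brings the word into its column-word form, in which the partition hypothesis places $c_k$ at the right end preceded by an element of strictly smaller absolute value (such as $b_m$, since $c_k < b_m$, or $a_l$, since $a_l < -c_k$). Relation~(R2) then negates $c_k$; further Knuth moves and applications of~(R3), which validly swaps $-c_k$ with the opposite-sign $c_{k-1}, \dots, c_1$, move $-c_k$ leftwards into its target position. The inductive hypothesis applied to the inner $c_1, \dots, c_{k-1}$ then converts these into $-c_{k-1}, \dots, -c_1$, completing the induction. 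The base case $k = 1$ follows from a single (R2) application after a preparatory Knuth move, using $-c_1 > a_l$ to provide the required absolute-value comparison.

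The main obstacle is the inductive step of part~(i): verifying that the peeling-off of the outer pair $c_1, -c_1$ is consistent with the Robinson--Schensted bumping for general $k$ requires careful tracking of which entries land in each row of the partial tableau throughout the insertion. The partition condition and the inequalities $c_1 < \dots < c_k < 0 < a_l < -c_k$ ensure that the necessary bump paths decouple, but rigorous verification involves case analysis depending on the relative positions of the $b_i$ and $c_j$ in the row~1 entries. A secondary subtlety in part~(ii) is that each invocation of~(R2) requires placing a smaller-absolute-value element adjacent to the target via a Knuth move, and the final ordering of the $-c_i$'s must then be arranged through judicious choice of the subsequent~(R3) swaps and Knuth rearrangements.
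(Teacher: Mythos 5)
Your plan for part~(i) has the induction going in the wrong direction, and it relies on an operation that is not valid for Knuth equivalence. Observe what the target word $w'$ looks like: the middle segment $(c_1,\dots,c_k,0,-c_k,\dots,-c_1)$ of $w$ becomes $(-c_k,\dots,-c_1,0,c_1,\dots,c_k)$, so the \emph{innermost} pair $(c_k,-c_k)$ of $w$ moves to the \emph{outermost} positions of the new middle segment, while the outermost pair $(c_1,-c_1)$ moves in to flank $0$. Consequently, you cannot ``peel off'' $c_1$ and $-c_1$, recurse on $(c_2,\dots,c_k,0,-c_k,\dots,-c_2)$, and then reattach them: after the recursion the entries $c_1$ and $-c_1$ would need to be reinserted in positions that are not adjacent to where they were removed. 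More fundamentally, deleting two letters from a word and inserting them elsewhere is not a Knuth move --- Knuth equivalence is a relation on words of fixed length --- so this reduction would need an extra commutation argument that you have not supplied and that is unlikely to hold in this generality. The paper's proof instead handles $c_k$ first: it performs a sequence of ``best fit'' Knuth rewrites to produce a subword in which the triple $(c_k,0,-c_k)$ is isolated inside a word that satisfies the hypotheses of the $k=1$ statement (Lemma~\ref{L:tech1}), applies that lemma to flip $c_k$, undoes the rewrites, performs further best-fit rewrites to expose a word with middle segment $(c_1,\dots,c_{k-1},0,-c_{k-1},\dots,-c_1)$, and then invokes the inductive hypothesis on $c_1,\dots,c_{k-1}$.

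Your account of part~(ii) has the correct overall scheme (process $c_k$ first via an (R2) move, then recurse on $c_1,\dots,c_{k-1}$), but there is a slip in the base case: (R2) requires that $|c_1|$ exceed the absolute value of the \emph{adjacent} letter, which is $b_m$, not $a_l$; the inequality you invoke ($-c_1 > a_l$) is not the one needed. The relevant comparison $|c_1|>|b_m|$ is what one should extract from the partition hypothesis. The description of the intermediate Knuth rearrangements and (R3) moves is also too sketchy to confirm: you would need to track explicitly where the column-word manipulations place the elements before each (R2) or (R3) application.
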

\begin{proof}
We prove this by induction on $k$.  The case $k=1$ is given by Lemma \ref{L:tech1} and
from condition (R2) in the definition of the $\tau$-equivalence.

To prove the general case, first we best fit
$c_1, \dots, c_{k-1}$ under $b_1, \dots, b_m$, which gives that
\begin{equation} \label{EQ:K1}
  (b_1, \dots, b_m, c_1, \dots, c_{k-1}) \sim^K
(b_{i_1}, \dots, b_{i_{k-1}}, c'_1, \dots, c'_m).
\end{equation}
Now we can best fit $b_{i_1}, \dots, b_{i_{k-1}}$ under $a_1, \dots, a_l$ to get that
\begin{equation} \label{EQ:K2}
(a_1, \dots, a_l,b_{i_1}, \dots, b_{i_{k-1}}) \sim^K
(a_{i'_1}, \dots, a_{i'_{k-1}}, b'_1, \dots, b'_l).
\end{equation}
Putting this all together, we get that $w$ is Knuth equivalent to
\[
(a_{i'_1}, \dots, a_{i'_{k-1}}, b'_1, \dots, b'_l,c'_1, \dots, c'_m, c_k,0,-c_k, -c'_m, \dots, -c'_1, -b'_l, \dots, -b'_1, -a_{i'_{k-1}}, \dots, -a_{i'_1}).
\]

Since $\part(\RS(a_1, \dots, a_l,b_1, \dots, b_m, c_1, \dots, c_k)) = (m,l,k)$,
we can deduce that $b'_l = a_l$ and $c'_m = b_m.$  We can also use this to deduce that the element
of $(b_1, \dots, b_m)$ which best fits over $c_k$ is an element of $(c'_1, \dots, c'_m)$.
Now we apply Lemma \ref{L:tech1} to the part of this word between $b'_l$ and $-b'_l$ to get that this is Knuth equivalent to
\[
(a_{i'_1}, \dots, a_{i'_{k-1}}, b'_1, \dots, b'_l,c'_1, \dots, c'_m, -c_k,0,c_k, -c'_m, \dots, -c'_1, -b'_l, \dots, -b'_1, -a_{i'_{k-1}}, \dots, -a_{i'_1}).
\]
Now we can apply the Knuth equivalences in \eqref{EQ:K1} and \eqref{EQ:K2} to get that this is Knuth equivalent to
\[
(a_1, \dots, a_l,b_1, \dots, b_m, c_1, \dots,c_{k-1}, -c_k,0,c_k, -c_{k-1},\dots, -c_1,-b_m,\dots,-b_1,-a_l, \dots, -a_1).
\]
Now we can best fit
$b_{m-k+2}, \dots, b_m$
over
$c_1, \dots, c_{k-1}, -c_k$ to get
\[
   (b_{m-k+2}, \dots, b_m, c_1, \dots, c_{k-1}, -c_k) \sim^K
   (b_{m-k+2}, \dots, b_m, -c_k, c_1, \dots, c_{k-1}).
\]
Now we can best first $a_1, \dots, a_l$ over $b_1, \dots, b_m,-c_k$ to get that
\begin{equation} \label{EQ:K3}
(a_1, \dots, a_l,b_1, \dots, b_m, -c_k) \sim^K (a'_1, \dots, a'_m,-c_k,b_{j_1}, \dots, b_{j_l}).
\end{equation}
So we have that
\[
(a_1, \dots, a_l,b_1, \dots, b_m, c_1, \dots,c_{k-1}, -c_k,0,c_k, -c_{k-1},\dots, -c_1,-b_m,\dots,-b_1,-a_l, \dots, -a_1),
\]
and therefore $w$, is Knuth equivalent to
\[
(a'_1, \dots, a'_m,-c_k,b_{j_1}, \dots, b_{j_l},c_1, \dots, c_{k-1},0,-c_{k-1}, \dots, -c_1, -b_{j_l}, \dots, -b_{j_1}, c_k, -a'_m, \dots, -a'_1).
\]
By induction this is Knuth equivalent to
\[
(a'_1, \dots, a'_m,-c_k,b_{j_1}, \dots, b_{j_l},-c_{k-1}, \dots, -c_1,0,c_1, \dots, c_{k-1}, -b_{j_l}, \dots, -b_{j_1}, c_k, -a'_m, \dots, -a'_1).
\]
Finally, by applying the Knuth equivalence \eqref{EQ:K3}, we get that this is Knuth equivalent to
\[
(a_1, \dots, a_l,,b_1, \dots, b_m,-c_{k}, \dots, -c_1,0,c_1, \dots, c_{k}, -b_m, \dots, -b_1, -a_l, \dots, -a_1).
\]
\end{proof}

\begin{Theorem} \label{T:LT3}
  Let $A \in \sTab^\leq(P)$ be justified row equivalent to column strict.
  Let
\[
(a_1, \dots, a_{q+1}, b_1, \dots, b_p, -a_{2q+1}, \dots, -a_{q+2}) = \word(s_2 s_1 s_2 A^{L^-}).
\]
	Then
\[
(a_1, \dots, a_{q+1}, b_1, \dots, b_p, -a_{2q+1}, \dots, -a_{q+2}, 0, a_{q+2}, \dots, a_{2q+1}, -b_p, \dots, -b_1, -a_{q+1}, \dots, -a_1)
\]
is Knuth equivalent to $\word(c \cdot A)$.
In particular this implies that $\word(A)$ is $\tau$-equivalent to $\word(c \cdot A)$.
\end{Theorem}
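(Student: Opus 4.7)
The plan is to apply Lemma~\ref{L:LT} to the weird word displayed in the theorem statement and then rearrange the result by a pair of best-fit block swaps to recover $\word(c \cdot A)$. Preparatory unpacking: since $A$ is justified row equivalent to column strict, $A^{L^-}$ is row equivalent to column strict (by Theorem~\ref{T:recs}), so we are in Case~1 of the definition and $c \cdot A = B$ with $B^{L^+} = T$, where $T := s_2 s_1 s_2 A^{L^-}$. Write the three rows of $T$ as $R_1, R_2, R_3$, of lengths $p+1, q, p$ respectively (these being what the theorem statement calls $(a_1,\dots,a_{q+1})$, $(b_1,\dots,b_p)$ and $(-a_{2q+1},\dots,-a_{q+2})$, the indexing in the statement tracking the row lengths of $T$). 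Unpacking the definition of $(\cdot)^{L^+}$ gives
\[
\word(c \cdot A) \;=\; R_1,\ -\mathrm{rev}(R_3),\ R_2,\ 0,\ -\mathrm{rev}(R_2),\ R_3,\ -\mathrm{rev}(R_1),
\]
where $-\mathrm{rev}(\cdot)$ denotes reverse-and-negate, whereas the word in the theorem is the skew-symmetric completion of $\word(T)$:
\[
R_1,\ R_2,\ R_3,\ 0,\ -\mathrm{rev}(R_3),\ -\mathrm{rev}(R_2),\ -\mathrm{rev}(R_1).
\]

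The heart of the argument is to apply Lemma~\ref{L:LT} to this completion with $(a_1,\dots,a_l) = R_1$, $(b_1,\dots,b_m) = R_2$ and $(c_1,\dots,c_k) = R_3$, so that $l = p+1$, $m = q$ and $k = p$. The inequalities $k \le l \le m$ read $p \le p+1 \le q$ and follow from $q > p$; the strict monotonicity and negativity conditions on $R_2, R_3$ follow because the rows of $T$ are strictly increasing and $R_2$, $R_3$ record (respectively) the first-half middle-row entries of $B$ and the negations of the largest $p$ entries of $B$'s top row; and the partition requirement $\part(\RS(R_1, R_2, R_3)) = (q, p+1, p)$ follows from the shape-invariance of Robinson--Schensted insertion under the row-swap operations building $T$ out of $A^{L^-}$. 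Lemma~\ref{L:LT} then delivers a Knuth equivalence from the weird word to
\[
R_1,\ R_2,\ -\mathrm{rev}(R_3),\ 0,\ R_3,\ -\mathrm{rev}(R_2),\ -\mathrm{rev}(R_1).
\]

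To reach $\word(c \cdot A)$, I would swap the adjacent blocks $R_2$ and $-\mathrm{rev}(R_3)$ on the left of $0$, and symmetrically $R_3$ and $-\mathrm{rev}(R_2)$ on the right. Since $R_1 \cdot (-\mathrm{rev}(R_3))$ is the non-decreasing top row of $B$ and $R_2$ is the first half of the non-decreasing middle row, the entries of $-\mathrm{rev}(R_3)$ and $R_2$ interleave in the pattern prescribed by a best-fit row swap, so the exchange is a composition of elementary Knuth transpositions of the type used repeatedly in the proofs of Lemmas~\ref{L:flat1}--\ref{L:LT}. This recovers $\word(c \cdot A)$ and proves the asserted Knuth equivalence. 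For the $\tau$-equivalence of $\word(A)$ with $\word(c \cdot A)$, I would invoke Lemma~\ref{L:flat2} to $\tau$-equate $\word(A)$ with the skew-symmetric completion of $\word(A^{L^-})$, and then use the Knuth (hence $\tau$) invariance of the row-swaps comprising $s_2 s_1 s_2$ to replace $A^{L^-}$ by $T$. The hard part will be verifying the partition hypothesis of Lemma~\ref{L:LT} together with the best-fit conditions for the two block swaps; both reduce to careful book-keeping of how the row-swap operations act on the column-strict representative of $A^{L^-}$.
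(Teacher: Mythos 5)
Your high-level plan matches the paper's: apply Lemma~\ref{L:LT} to the skew-symmetric completion of $\word(s_2 s_1 s_2 A^{L^-})$ and then rearrange the output by Knuth moves to reach $\word(c \cdot A)$. Where you diverge is the rearrangement step, and that is where the gap lies.

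After Lemma~\ref{L:LT} you correctly land on the word $R_1,\ R_2,\ R'_3,\ 0,\ R_3,\ R'_2,\ R'_1$ (writing $R'$ for the reverse-and-negate of $R$), and you want to reach $\word(c \cdot A) = R_1,\ R'_3,\ R_2,\ 0,\ R'_2,\ R_3,\ R'_1$. You claim the required swap of the adjacent blocks $R_2$ and $R'_3$ (and its mirror image) ``is a composition of elementary Knuth transpositions'' because ``the entries of $R'_3$ and $R_2$ interleave in the pattern prescribed by a best-fit row swap.'' That assertion is doing all the work and it is not substantiated. Note first that the swap is not a Knuth equivalence of the sub-words $(R_2, R'_3)$ in isolation: in the paper's running example ($R_2 = (-6,-3)$, $R'_3 = (5)$) one has $\RS(-6,-3,5)$ a single row but $\RS(5,-6,-3)$ of shape $(2,1)$. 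So any valid chain of Knuth moves has to reach outside the two blocks — specifically into $R_1$ and past the central $0$ — and whether this can be done is precisely what needs proving. The paper does this via three carefully sequenced best-fit operations: the first best-fits a length-$(p+1)$ subsequence of $R_2$ under $R_1$ (producing new auxiliary lists $a'_1,\dots,a'_q$ and $b_{i_1},\dots,b_{i_{p+1}}$), the second re-attaches $R'_3$ to $R_1$ on the left of $0$ using an explicit case analysis on the last element $a'_q$, and the third mirrors the reassembly on the right of $0$. Each step requires verifying a best-fit/column-strictness condition; your proposal acknowledges these verifications as ``the hard part'' but does not carry them out, and the single block-swap framing obscures the fact that the Knuth moves must pass through $R_1$. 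As written, your argument asserts the conclusion of the paper's three-step computation rather than establishing it.

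Two smaller remarks. First, there is an $p \leftrightarrow q$ indexing slip in the theorem as printed, which you flag and correct (the lengths of the rows of $s_2 s_1 s_2 A^{L^-}$ are $(p+1,q,p)$, so the hypothesis $k\le l\le m$ of Lemma~\ref{L:LT} is satisfied with $l=p+1$, $m=q$, $k=p$); this matches what the paper's proof implicitly does. Second, your route to the $\tau$-equivalence (Lemma~\ref{L:flat2} on $\word(A)$, then Knuth-invariance of the row swaps) is reasonable and in the spirit of the paper's terse ``in particular,'' so no objection there.
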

\begin{proof}
By Lemma \ref{L:LT} we have that
\[
(a_1, \dots, a_{q+1}, b_1, \dots, b_p, -a_{2q+1}, \dots, -a_{q+2}, 0, a_{q+2}, \dots, a_{2q+1}, -b_p, \dots, -b_1, -a_{q+1}, \dots, -a_1)
\]
is Knuth equivalent to
\[
(a_1, \dots, a_{q+1}, b_1, \dots, b_p, a_{q+2}, \dots, a_{2q+1}, 0, -a_{2q+1}, \dots, -a_{q+2}, -b_p, \dots, -b_1, -a_{q+1}, \dots, -a_1).
\]
Now if $b_{i_1}, \dots, b_{i_{q+1}}$ best fits under $a_1, \dots, a_{q+1}$ then we get that
this is Knuth equivalent to
\begin{align*}
&(a'_1, \dots, a'_p,
a_{q+2}, \dots, a_{2q+1},
b_{i_1}, \dots, b_{i_{q+1}},
0,  \\
&\qquad -b_{i_{q+1}}, \dots, -b_{i_1},
-a_{2q+1}, \dots, -a_{q+2},
-a'_p, \dots, -a'_1).
\end{align*}
Note that $a'_p = a_{q+1}$ or $a'_p = b_j < 0$ for some $j$, so in either case
we can best fit
\[
(b_{i_1}, \dots, b_{i_{q+1}},
0,
-b_{i_{q+1}}, \dots, -b_{i_3})
\]
under
\[
(a'_1, \dots, a'_p,
a_{q+2}, \dots, a_{2q+1})
\]
to get that
\[
(a'_1, \dots, a'_p,
a_{q+2}, \dots, a_{2q+1},
b_{i_1}, \dots, b_{i_{q+1}},
0,
-b_{i_{q+1}}, \dots, -b_{i_1},
-a_{2q+1}, \dots, -a_{q+2},
-a'_p, \dots, -a'_1)
\]
is Knuth equivalent to
\[
(a_1, \dots, a_{2q+1}, b_1, \dots, b_m,0,
-b_{i_{q+1}}, \dots, -b_{i_1},
-a_{2q+1}, \dots, -a_{q+2},
-a'_p, \dots, -a'_1).
\]
Now we can best fit $b_{m-q+2}, \dots, b_m, 0, -b_{i_{q+1}}, \dots, -b_{i_1}$ over
$-a_{2q+1}, \dots, -a_{q+2},
-a'_p, \dots, -a'_1)$
to get that
\[
(a_1, \dots, a_{2q+1}, b_1, \dots, b_m,0,
-b_{i_{q+1}}, \dots, -b_{i_1},
-a_{2q+1}, \dots, -a_{q+2},
-a'_p, \dots, -a'_1).
\]
is Knuth equivalent to
\[
(a_1, \dots, a_{2q+1}, b_1, \dots, b_m,0,
-b_m, \dots, -b_1,
-a_{2q+1}, \dots, -a_{1}).
\]
\end{proof}

Our goal is to prove that $L(A)$ is finite dimensional if and only if $A$ is $C$-conjugate to a row equivalent to column strict diagram.
The following lemmas build up to this.

\begin{Lemma} \label{L:alm}
Let $A \in sTab^{\le}(P)$.
 If $A^{L^-}$ is row equivalent to column strict then so is $A$.
\end{Lemma}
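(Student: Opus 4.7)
The plan is to construct an explicit column-strict filling of the left justification of $A$ by lifting a column-strict filling row-equivalent to $A^{L^-}$, using the skew symmetry of $A$ to mirror the matchings implicit in that filling.

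We first fix a column-strict filling row-equivalent to $A^{L^-}$, that is, columns $(x_t, y_t, z_t)$ for $t = 1, \ldots, p$ together with a shortened $(p+1)$st column $(y_{p+1}, z_{p+1})$, where $(x_t)$ is a permutation of $(a_1, \ldots, a_p)$, $(y_t)_{t=1}^{p+1}$ is a tuple of distinct entries of $\{b_1, \ldots, b_q\}$, and $(z_t)_{t=1}^{p+1}$ is a permutation of $(-a_{2p+1}, \ldots, -a_{p+1})$, satisfying $x_t > y_t > z_t$ for $t \leq p$ and $y_{p+1} > z_{p+1}$. From this we read off two bijective matchings: $M_1 \colon \{a_1,\ldots,a_p\} \to \{y_1,\ldots,y_p\}$ given by $x_t \mapsto y_t$ (with $a > $ image), and $M_2 \colon \{y_1,\ldots,y_{p+1}\} \to \{-a_{p+1},\ldots,-a_{2p+1}\}$ given by $y_t \mapsto z_t$ (with source $>$ image). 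Negating both yields mirrored matchings $\widetilde M_2 \colon \{a_{p+1},\ldots,a_{2p+1}\} \to \{-y_1,\ldots,-y_{p+1}\}$ and $\widetilde M_1 \colon \{-y_1,\ldots,-y_p\} \to \{-a_1,\ldots,-a_p\}$, each again with source $>$ image.

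We then set $T = \{y_1,\ldots,y_p\} \cup \{-y_1,\ldots,-y_{p+1}\}$, viewed as $2p+1$ distinct positions of row $2$ of $A$; the two halves occupy disjoint row-$2$ positions because every $y_i \leq 0$ (the middle entry of row $2$ being $0$, so the $b_j$'s are non-positive) while every $-y_j \geq 0$. We define $\phi \colon \{a_1,\ldots,a_{2p+1}\} \to T$ by $M_1$ on $\{a_1,\ldots,a_p\}$ and $\widetilde M_2$ on $\{a_{p+1},\ldots,a_{2p+1}\}$; this is a bijection with $a > \phi(a)$ everywhere. We define $\psi \colon T \to \{-a_1,\ldots,-a_{2p+1}\}$ by $M_2$ restricted to $\{y_1,\ldots,y_p\}$, by $\widetilde M_1$ on $\{-y_1,\ldots,-y_p\}$, and by the extra rule $-y_{p+1} \mapsto z_{p+1}$; the last rule is valid because $y_{p+1} \leq 0$ and $z_{p+1} < y_{p+1}$ together force $z_{p+1} < 0 \leq -y_{p+1}$.

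Composition produces $2p+1$ strictly decreasing triples $(a_i, \phi(a_i), \psi(\phi(a_i)))$ partitioning $\{a_i\}$, $T$, and $\{-a_i\}$. We place these in columns $1, \ldots, 2p+1$ of the left justification of $A$ and scatter the remaining $2(q-p)$ row-$2$ entries in columns $2p+2, \ldots, 2q+1$ arbitrarily; the latter columns contain only a single row-$2$ entry and so impose no column-strict constraint. This exhibits the left justification of $A$ as row-equivalent to a column-strict filling, proving the lemma. The main obstacle I expect is the bookkeeping in the degenerate subcase $b_q = 0$, where $0$ repeats in row $2$ and $\{y_i\}, \{-y_j\}$ may share the value $0$; there the two halves of $T$ still occupy distinct row-$2$ positions provided one selects the correct copies of $0$ from the two halves, and the argument then carries through unchanged.
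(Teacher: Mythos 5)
Your argument is correct and follows essentially the same strategy as the paper's: mirror the $p$ strictly decreasing length-three columns of a column-strict filling of $A^{L^-}$ across the centre to fill the left justification of $A$, and handle the middle column separately. The only real difference is the choice of that middle column: the paper pairs the genuine central $0$ of row $2$ with $\pm a_j$ (using only that $z_{p+1}=-a_j$ is negative), whereas you use $(-z_{p+1},\,-y_{p+1},\,z_{p+1})$, a choice that forces the extra position-versus-value bookkeeping you flag in the $b_q=0$ case, which the paper's construction avoids entirely.
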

\begin{proof}
Recall that $A$ has row lengths given by $(2p+1, 2q+1, 2p+1)$.
By permuting entries within rows, we can find $p$ columns of $A^{L^-}$ which are strictly decreasing.
Furthermore, the entry in the bottom row of $A^{L^-}$ which is not in one of these columns must be negative.
By putting this entry below $0$ in $A$, and its negation above $0$, we can find a row equivalence class of
$A$ where every column left of $0$ contains one of the decreasing columns from $A^{L^-}$, and every column right
of zero is the reverse of the negation of one of the columns left of $0$.
Thus every column in this element of the row equivalence class of $A$ is strictly decreasing.
\end{proof}

\begin{Lemma}
   Let $A \in \sTab^{\leq}(P)$ and let $\bq = \part(\RS(A))$.
    If $\content(\bq) = \content(\bp)$ then
    $\bq = (2q+1,2p+1,2p+1)$ or $\bq = (2q+1,2p+2,2p)$.
\end{Lemma}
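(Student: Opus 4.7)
The plan is to classify all three-part partitions $\bq$ of $|\bp| = 4p + 2q + 3$ with $\content(\bq) = \content(\bp)$, and then to eliminate extraneous candidates using that $\bq = \part(\RS(A))$ for some $A \in \sTab^\leq(P)$. The strategy has two stages: a combinatorial enumeration of candidate partitions, followed by a single inequality coming from the long middle row of $A$.

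First I would compute $\content(\bp)$ directly: writing $\bp$ in nondecreasing order as $(2p+1, 2p+1, 2q+1)$, we have $(q_1, q_2+1, q_3+2) = (2p+1, 2p+2, 2q+3)$, whose two odd and one even entry give $\content(\bp) = \{p, p+1, q+1\}$. To enumerate $\bq = (q_1 \le q_2 \le q_3)$ with the same content, note that each element of the multiset $\{p, p+1, q+1\}$ must appear in $(q_1, q_2+1, q_3+2)$ either as $2v$ or as $2v+1$. The sum constraint $q_1 + (q_2+1) + (q_3+2) = 4p + 2q + 6$ forces exactly two of the three choices to be odd. Sorting each resulting multiset into nondecreasing order and solving for $(q_1, q_2, q_3)$ produces exactly three candidates: $(2p, 2p+2, 2q+1)$, $(2p+1, 2p+1, 2q+1)$, and $(2p+1, 2p+2, 2q)$.

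Finally, I would eliminate the third candidate using Lemma~\ref{L:altRS}. The middle row of $A$ is a nondecreasing sequence of length $2q+1$, so it is a weakly increasing subsequence of $\word(A)$ of that length. Hence $q_3 = \ell(\word(A), 1) \ge 2q+1$, which rules out $(2p+1, 2p+2, 2q)$ and leaves precisely the two partitions of the lemma (written there in nonincreasing order as $(2q+1, 2p+1, 2p+1)$ and $(2q+1, 2p+2, 2p)$). The main effort lies in the bookkeeping of the content enumeration, specifically in verifying that sorting each of the three sum-compatible multisets yields the stated partition; the elimination of the third case via the increasing subsequence bound is immediate.
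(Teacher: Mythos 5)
Your proposal is correct and follows essentially the same strategy as the paper: enumerate the candidate partitions sharing $\content(\bp)$ via the parity-and-sum constraint, then eliminate the extraneous candidate $(2q,2p+2,2p+1)$ by Lemma~\ref{L:altRS}. The paper invokes the full dominance bound $\part(\RS(A))\geq\bp$, whereas you only use the weaker observation that the largest part must be at least $2q+1$ (coming from the middle row), but either suffices; your explicit listing of all three content-compatible candidates is if anything a cleaner writeup of the same argument.
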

\begin{proof}
   Note that $\content(2q+1,2p+1,2p+1) = (p, p+1, q+1)$, and the only other partition
   with this content is $(2q,2p+2,2p+1)$.  Now
    by Lemma \ref{L:altRS}, $\part(\RS(A)) \geq (2q+1,2p+1,2p+1)$, thus
     $\part(\RS(A)) \neq (2q,2p+2,2p+1)$.
\end{proof}

By Theorem \ref{T:recs} we have that if $\part(\RS(A)) = (2q+1,2p+1,2p+1)$, then $A$ is
row equivalent to column strict. So we need only consider the case that $\part(\RS(A)) = (2q+1,2p+2,2p)$.

\begin{Lemma} \label{L:cB}
Let $A \in \sTab^{\leq}(P)$ and suppose that  $\part(\RS(A)) = (2q+1,2p+2,2p)$.  Then:
\begin{enumerate}
\item
  $A^{L^+}$ is row equivalent to column strict.
\item The middle row of $s_2 s_1 s_2 A^{L^+}$ contains only negative numbers.
\item The negation of the element in the bottom right position of $s_2 s_1 s_2 A^{L^+}$
is larger than the element in the upper right position of $s_2 s_1 s_2 A^{L^+}$.
Thus $c \cdot A$ is defined.
\item
$c \cdot A$ is row equivalent to column strict.
\end{enumerate}
\end{Lemma}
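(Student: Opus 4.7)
The plan is to prove the four parts in order, converting each into a statement about $\part(\RS)$ so that Theorem \ref{T:recs} (and its obvious analog for left-justified three-row tables with non-partition row lengths) can be invoked. Throughout, the key input is the hypothesis $\part(\RS(A))=(2q+1,2p+2,2p)$, which controls the lengths of increasing and decreasing subsequences of $\word(A)$ via Lemmas \ref{L:altRS} and \ref{L:altcRS}.

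For (i), I would show $\part(\RS(A^{L^+}))=(q,p+1,p)$. The lower bound is immediate: the three rows of $A^{L^+}$ are increasing subsequences of lengths $p+1$, $q$, $p$, so Lemma \ref{L:altRS} gives $\part(\RS(A^{L^+}))\ge (q,p+1,p)$ componentwise. For the upper bound, I would take any collection of $k$ disjoint weakly increasing subsequences in $\word(A^{L^+})$ and extend each of them through their negations inside $\word(A)$ (using that $\word(A)$ is skew-symmetric and the entries of $A^{L^+}$ sit naturally inside it); this produces disjoint weakly increasing subsequences in $\word(A)$ whose total length is bounded by $2q+1+(2p+2)+(2p)$ through Lemma \ref{L:altRS}, and an accounting of the added entries yields the required inequalities $\ell(\word(A^{L^+}),k)\le q+(p+1)+p$ for $k=1,2,3$. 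Theorem \ref{T:recs}(i), in its straightforward variant for left-justified tables, then gives the conclusion.

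For (ii) and (iii), once (i) is established the iterated row swap $s_2s_1s_2A^{L^+}$ is well-defined, and its entries can be tracked via the best-fit description from \cite[\S4]{BG1}. I would argue both statements by contradiction. For (ii), a non-negative entry $x\ge 0$ landing in the middle row of $s_2s_1s_2A^{L^+}$ could be combined with the initial segment of $\word(A)$'s top row up to $x$, the positive tail of the middle row $0,-b_q,\dots,-b_1$ reversed appropriately, and mirror entries from the bottom half of $A$, to produce a weakly increasing subsequence of $\word(A)$ of length at least $2q+2$, contradicting $\ell(\word(A),1)=2q+1$. For (iii), a failure of the corner inequality would similarly allow one to assemble either a single increasing subsequence longer than $2q+1$ or a pair of disjoint increasing subsequences of total length exceeding $2q+1+2p+2$, again contradicting the hypothesis on $\part(\RS(A))$.

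For (iv), I use that by the definition of $c\cdot A$ in Case 1 (which parts (i)--(iii) put us in, since $A^{L^-}$ need not be row equivalent to column strict) we have $(c\cdot A)^{L^-}=s_2s_1s_2A^{L^+}$. The row-swap operations $s_i$ preserve the $\RS$-shape by construction of best-fit swaps, so $\part(\RS(s_2s_1s_2A^{L^+}))=\part(\RS(A^{L^+}))=(q,p+1,p)$, and the left-justified analog of Theorem \ref{T:recs}(i) yields that $s_2s_1s_2A^{L^+}$ is row equivalent to column strict. Lemma \ref{L:alm} then upgrades this to $c\cdot A$ itself. The main obstacle I expect is the bookkeeping in (ii) and (iii): one must identify precisely which segments of $\word(A)$ are combined with the putative offending entry, and carry the argument through the three swaps by tracking best-fit images rather than by brute-force case analysis.
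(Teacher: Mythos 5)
Your plan for part (i) --- reduce to showing $\part(\RS(A^{L^+}))=(q,p+1,p)$ and bound $\ell(\word(A^{L^+}),k)$ by extending increasing subsequences of $\word(A^{L^+})$ through their negations inside $\word(A)$ --- is a genuinely different route from the paper's, and the reduction itself is sound, but the extension step has a concrete gap. The entries of $A^{L^+}$ coming from row $1$ of $A$ sit in the second half of $\word(A)$, so their negations sit in the first half; if an increasing subsequence $s$ of $\word(A^{L^+})$ uses such an entry, the reversed negation $\bar s$ cannot be appended after $s$ (positions are wrong), and even when positions allow it, the concatenation $s\bar s$ is increasing only if the last entry of $s$ is negative. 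Treating $s$ and $\bar s$ as two separate subsequences only gives $2\,\ell(\word(A^{L^+}),k)\le\ell(\word(A),2k)$; for $k=1$ this yields $\ell(\word(A^{L^+}),1)\le p+q+1$ (since $\ell(\word(A),2)=2q+2p+3$), not the needed $\le q$. The paper sidesteps this: it first establishes two best-fitting facts, then, assuming a column-strictness failure in $A^{L^+}$, it exhibits two specific disjoint increasing strings in $\word(A)$ --- strings that pass through $0$ and draw freely on both halves of $\word(A)$ --- with combined length $2q+2p+4>\ell(\word(A),2)$. These strings are not obtained by doubling anything inside $\word(A^{L^+})$, and I do not see how your doubling device can reach a tight enough bound.

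For (ii) the invariant you invoke is also the wrong one. You aim to produce a single weakly increasing subsequence of length $\ge 2q+2$, but the paper's contradiction when $a_0>0$ lands in the middle row is a collection of $2p+1$ disjoint strictly decreasing strings of length $3$ (each $(a_{-i},-b_{j_i},-a_i)$ together with its reverse-negation, plus $(a_0,0,-a_0)$), forcing $\part(\RS(A))^T\ge(3^{2p+1},\ast)$, whereas $(2q+1,2p+2,2p)^T=(3^{2p},2^2,1^{2q-2p-1})$ has only $2p$ parts of size at least $3$. This is a Lemma~\ref{L:altcRS} argument, not a Lemma~\ref{L:altRS} argument; an offending non-negative middle-row entry does not obviously yield any increasing chain of length $2q+2$, so the route you sketch appears unavailable. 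Part (iii) in the paper is likewise a detailed best-fit bookkeeping argument about the specific corner entries rather than a count of increasing subsequences, while part (iv) --- where you also finish via Lemma~\ref{L:alm} --- is essentially the same as the paper's.
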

\begin{proof}
    Let $a_{-p}, \dots, a_{-1}, a_0, a_1, \dots, a_{p}$ be the increasing entries in the first row of $A$, and let
$-b_{q}, \dots, -b_1, 0, b_1, \dots, b_{q}$ be the middle row of $A$.

First we prove that $a_{-p}, \dots, a_0$ must best fit over $-b_{q}, \dots, -b_1$.
If it does not, then there must exists $i \in \{0, \dots, p\}$ such that $a_{-(p-i)} < -b_{q-i}$.  Thus we can form the following
increasing string in $\word(A)$:
\[
   a_{-p}, \dots, a_{-(p-i)}, -b_{q-i}, \dots, -b_1,0,b_1, \dots, b_{q-i}, -a_{-(p-i)}, \dots, -a_{-p}.
\]
This string has length $2 q + 3$, which contradicts
$\part(\RS(A)) = (2 q + 1,2 p+2,2p)$.

Next we prove that $a_1, \dots, a_p$ best fits over $b_1, \dots, b_q$.
If it did not, then there exists $i \in \{1, \dots, p\}$ such that $a_i < b_i$.  Thus we can form the following
increasing string in $\word(A)$:
\[
   a_{-p}, \dots, a_0, \dots, a_i, b_i, \dots, b_q.
\]
This string has length $p+q+2$, and
we can use it to find the following increasing string in $\word(A)$ of length $2p+2q+4$:
\[
   a_{-p}, \dots, a_0, \dots, a_i, b_i, \dots, b_q, -b_q, \dots, -b_i, -a_1, \dots, -a_0, \dots, -a_{-p}.
\]
This contradicts
$\part(\RS(A)) = (2 q + 1,2 p+2,2p)$.

Now we assume, for a contradiction, that $A^{L^+}$ is not row equivalent to column strict.
Let $j_0, \dots, j_p$ be positive integers such that
$-b_{j_p}, \dots, -b_{j_0}$ best fit under $a_{-p}, \dots, a_0$.
Let $i$ be the smallest non-negative integer such that $-b_{j_i} < -a_{i+1}$.  Such an $i$ must exists, since otherwise $A^{L^+}$ will be
row equivalent to column strict.
Define $b_0 = 0$.
Now let $k$ be the smallest integer such that
\begin{enumerate}
 \item $0 \leq k \leq i$;
\item $j_{i-l} = j_i - l$ if $0 < l \leq k$.
\item $j_{i-k-1} \neq j_{i-k}-1$.
\end{enumerate}
This implies that $a_{-(i-k)} < -b_{j_{i-k}-1}$.
So we can form the following two disjoint increasing substrings in $\word(A)$:
\[
  a_{-p}, \dots, a_{-(i-k)}, -b_{j_{i-k}-1}, \dots, b_{-1}, 0, b_1, \dots, b_q
\]
and
\[
  -b_q, \dots, -b_{j_i}, -a_{i+1}, \dots, -a_1, -a_0, -a_{-1}, \dots, -a_{-p}.
\]
The first string has length $p-i+k+1+j_{i-k}-1+1+q = p+q-i+k+j_{i-k}+1$.
The second string has length $q-j_i+1 + i+1 + p+1 = q+p-j_i+i+3$.
Thus, using the fact that $j_{i-k} = j_i-k$, the combined length of these two strings
is $2q + 2p + 4$, which contradicts $\part(\RS(A)) = (2q+1, 2p+2, 2 p)$.
Thus $A^{L^+}$ is row equivalent to column strict.

Finally we need to prove that the middle row of
$s_2 s_1 s_2 A^{L^+}$ contains only negative numbers.
Let $j_1, \dots, j_p$ be such that $-b_{j_p}, \dots, -b_{j_1}$ best fit over $-a_p, \dots, -a_1$.
Now it is clear that all the numbers in the last row of $s_2 A^{L^+}$ are negative.
Now let $a'$ be the entry in the first row of $A^{L^+}$ which does not best fit over
$-b_{j_p}, \dots, -b_{j_1}$.
If $a' >0$, then since all the $-b_i$'s are negative, we must have that $a' = a_0$.
In this case for $i=1, \dots, p$, $(a_{-i}, -b_{j_i}, -a_i)$ is a decreasing string in
$\word(A^{L^+})$ and in $\word(A)$.  Furthermore, reversing and negating these strings yields a further $p$ disjoint deceasing strings
of length 3
in $\word(A)$.  These, and the string $(a_0, 0, -a_0)$ show that
$\part(\RS(A))^T$ is larger than a partition of the form $(3^{2p+1}, *)$.
This contradicts $\part(\RS(A)) = (2q+1, 2p+2, 2p)$.
So we have that $a' < 0$, and furthermore the middle row of $s_1 s_2 A^{L^+}$ contains only negative numbers.
Now since the last row of $s_1 s_2 A^{L^+}$ also contains all negative numbers, we have that
the middle row of $s_2 s_1 s_2 A^{L^+}$ contains only negative numbers.

Now let $x$ be the element in the upper right position of
$s_2 s_1 s_2 A^{L^+}$, and let
$y$ be the element in the lower right position.
We need to show that $x < -y$.
If $a_0 < 0$ then this is clear since in this case every element of $A^{L^+}$ is negative.
When $a_0 >0$, we need to consider the bottom row of
$s_2 s_1 s_2 A^{L^+}$.  This row will contain $-a_n, \dots, -a_1$, and it will also contain
$-b_i$, where $-b_i$ is not one of the elements of $-b_m, \dots, -b_1$ which best fits over
$-a_n, \dots, -a_1$.
Let $-b_{k_n}, \dots, -b_{k_1}$ be as above, ie they are the elements which best fit over
$-a_n, \dots, -a_1$.
Note that $-b_{k_n}, \dots, -b_{k_1}$ are the elements in the middle row of $s_2 A^{L^+}$.
Now let $a'$ be the element in the first row of $A^{L^+}$ which is one of the elements which best fit over
$-b_{k_n}, \dots, -b_{k_1}$, so the middle row of
$s_1 s_s A^{L^+}$ contains $-a', -b_{k_n}, \dots, -b_{k_1}$.
We have already proved that since $a_0 > 0$, $a' < 0$.
So $a' < -b_{k_1}$, since otherwise $a_0$ would be the element which did not best fit over $-b_{k_n}, \dots, -b_{k_1}$.
So $-b_i < a' < -b_{k_1}$.  This implies that $-b_i < -a_1$, since otherwise
$-b_{k_1}$ would not be the element which best fits over $-a_1$.
Thus $-a_1$ is the element in the bottom right position of $s_2 s_1 s_2 A^{L^+}$, and
$a_0$ is the element in the upper right position of $s_2 s_1 s_2 A^{L^+}$, and we already have
that $a_0 < a_1$.

To see that $c \cdot A$ is row equivalent to column strict, simply note that
$(c \cdot A)^{L^-} = s_2 s_1 s_2 A^{L^+}$ is row equivalent to column strict, and apply Lemma \ref{L:alm}.
\end{proof}

Now we can state the main theorem of this section, which is analogous to Theorem \ref{T:caction1}.
The proof is very similar, where Lemma~\ref{L:cB} plays the role of Lemma \ref{L:cdefined}, and so omitted

\begin{Theorem} \label{T:caction2}
Suppose that $l$ is odd and $l > m$, and let
$A \in \sTab^\leq(P)$.
   Then $L(A)$ is finite dimensional if and only if
$A$ is $C$-conjugate to an s-table that is justified row equivalent to column strict.
   Furthermore, if $L(A)$ is finite dimensional, then
   $c \cdot L(A) \cong L(c \cdot A)$.
\end{Theorem}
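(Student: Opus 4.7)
The plan is to mirror the proof of Theorem \ref{T:caction1}, with Lemma \ref{L:cB} playing the role of Lemma \ref{L:cdefined} and Theorem \ref{T:LT3} playing the role of Lemma \ref{L:cctau}. Throughout, write $l = 2p+1$ and $m = 2q+1$ with $q > p$, as in the setup preceding the theorem (so that the relevant case is $l$ odd with $m > l$).

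First I would classify the finite-dimensional $L(A)$. Suppose $L(A)$ is finite dimensional. By Theorems \ref{T:BGKt2}, \ref{T:BV} and \ref{T:BGKt3}, this is equivalent to $\BV(\lambda_A) = \bp$, which in particular forces $\content(\part(\RS(A))) = \content(\bp)$. The unnumbered lemma preceding Lemma \ref{L:cB} then restricts $\part(\RS(A))$ to be either $(2q+1, 2p+1, 2p+1)$ or $(2q+1, 2p+2, 2p)$. In the first case Theorem \ref{T:recs}(i) gives that $A$ is justified row equivalent to column strict. In the second case, Lemma \ref{L:cB}(iii) ensures that $c \cdot A$ is defined and Lemma \ref{L:cB}(iv) gives that $c \cdot A$ is justified row equivalent to column strict. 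Conversely, if $A$ is $C$-conjugate to some $B \in \sTab^\cc(P)$, then $\part(\RS(B)) = \bp$ by Theorem \ref{T:recs}(i), so $L(B)$ is finite dimensional by Theorem \ref{T:BGKt2}, and hence so is its $C$-twist $L(A)$.

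For the statement $c \cdot L(A) \cong L(c \cdot A)$, Theorem \ref{T:LT3} gives $\word(A) \sim^\tau \word(c \cdot A)$, so Theorem \ref{T:tau} combined with Theorem \ref{T:BGKt3} yields $(\Ann_{U(\g,e)} L(A))^\dagger = (\Ann_{U(\g,e)} L(c \cdot A))^\dagger$. Property (iv) of Losev's map from \S\ref{SS:Losev} then places $L(A)$ and $L(c \cdot A)$ in a common $C$-orbit, and since $C = \langle c \rangle \cong \Z_2$ we must have $c \cdot L(A) \cong L(A)$ or $c \cdot L(A) \cong L(c \cdot A)$. To pin down the correct alternative, write $c \cdot L(A) \cong L(B)$ and assume $\part(\RS(A)) = (2q+1, 2p+1, 2p+1)$ (the other case is symmetric). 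If $\part(\RS(B)) = (2q+1, 2p+1, 2p+1)$, then both $A$ and $B$ are justified row equivalent to column strict and share a primitive ideal, so Theorem \ref{T:recs2} yields $A = B$; the same uniqueness applied to $c \cdot A$ (which is justified row equivalent to column strict with the same primitive ideal as $L(A)$) forces $c \cdot A = A$, and hence $L(c \cdot A) \cong L(B)$. If instead $\part(\RS(B)) = (2q+1, 2p+2, 2p)$, then Lemma \ref{L:cB}(iv) says $c \cdot B$ is justified row equivalent to column strict; it shares its primitive ideal with $L(A)$, so Theorem \ref{T:recs2} gives $c \cdot B = A$, and the $\Z_2$-action then yields $B = c \cdot A$.

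The main obstacle will be the case bookkeeping inside the combinatorial definition of $c \cdot A$ (the split between Case 1 and Case 2, governed by whether $A^{L^-}$ or $A^{L^+}$ is row equivalent to column strict) and the verification that the combinatorial $c$-action squares to the identity on the relevant s-tables. Both concerns, however, reduce to the uniqueness statement of Theorem \ref{T:recs2} once Theorem \ref{T:LT3} is in hand, so the overall structure of the argument parallels that of Theorem \ref{T:caction1} with the new combinatorial input supplied by the preceding lemmas.
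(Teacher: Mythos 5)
Your proposal correctly identifies that the paper intends the proof to mirror Theorem \ref{T:caction1}, with Lemma \ref{L:cB} in place of Lemma \ref{L:cdefined} and Theorem \ref{T:LT3} in place of Lemma \ref{L:cctau}; this is precisely what the paper says (``The proof is very similar, where Lemma~\ref{L:cB} plays the role of Lemma \ref{L:cdefined}, and so omitted''), and your case analysis on $\part(\RS(A))$ and $\part(\RS(B))$, the invocation of Theorems \ref{T:recs}, \ref{T:recs2}, \ref{T:BGKt2}, \ref{T:BGKt3}, \ref{T:BV}, \ref{T:tau} and Losev's property (iv), all track the proof of Theorem \ref{T:caction1} faithfully. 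One point worth flagging more explicitly than you do: Lemma \ref{L:cctau} applies to \emph{any} $A$ for which $c\cdot A$ is defined, whereas its type~B stand-in Theorem \ref{T:LT3} is only proved for $A$ justified row equivalent to column strict, and Lemma \ref{L:cB} only shows $c\cdot A$ is defined when $\part(\RS(A))=(2q+1,2p+2,2p)$. So when you assert in the $\part(\RS(A))=\bp$ sub-case that ``$c\cdot A$ (which is justified row equivalent to column strict with the same primitive ideal as $L(A)$)'' and when you invoke the ``$\Z_2$-action'' to convert $c\cdot B=A$ into $B=c\cdot A$, you are using facts (definedness of $c\cdot A$ for $A\in\sTab^\cc(P)$, and that the combinatorial $c$-action is an involution) that need a short verification analogous to the $\part(\RS(A))=(l,l,m)$ portion of the proof of Lemma \ref{L:cdefined}. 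You acknowledge this in your final paragraph; modulo supplying that verification, the argument is a correct reconstruction of the paper's (omitted) proof.
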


Last in this section we give the following technical lemma, which is needed in the
proof of Theorem \ref{T:mainintro}.

\begin{Lemma} \label{L:Blargersmaller}
If $A \in \sTab^\leq(P)$ is row equivalent to column strict then
   $\word(c \cdot A)$ can be obtained from $\word(A)$ through a series of
Knuth equivalences and larger-smaller transpositions.
In particular, $\part(\RS(A)) \leq \part(\RS(c \cdot A))$.
\end{Lemma}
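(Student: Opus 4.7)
The plan is to prove the lemma by analyzing the explicit construction of $c \cdot A$ from Cases~1--3 of this section and showing that the underlying word-level operations are expressible as Knuth equivalences and larger-smaller transpositions.

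First, I would argue that if $A$ is row equivalent to column strict (so $\part(\RS(A)) = (2q+1,2p+1,2p+1)$ by Theorem \ref{T:recs}), then $A^{L^-}$ is also row equivalent to column strict. This places us squarely in Case~1 of the definition of $c \cdot A$, so that $c \cdot A$ is the unique element $B \in \sTab^\leq(P)$ with $B^{L^+} = s_2 s_1 s_2 A^{L^-}$. In particular, the three row-swap operations $s_2$, $s_1$, $s_2$ are all defined on $A^{L^-}$.

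Second, I would invoke Theorem \ref{T:LT3} to identify $\word(c \cdot A)$, up to Knuth equivalence, with the explicit word
\[
W = (a_1, \dots, a_{q+1}, b_1, \dots, b_p, -a_{2q+1}, \dots, -a_{q+2}, 0, a_{q+2}, \dots, a_{2q+1}, -b_p, \dots, -b_1, -a_{q+1}, \dots, -a_1),
\]
where $(a_1, \dots, a_{q+1}, b_1, \dots, b_p, -a_{2q+1}, \dots, -a_{q+2}) = \word(s_2 s_1 s_2 A^{L^-})$. It then suffices to show that $W$ can be obtained from $\word(A)$ by a sequence of Knuth equivalences and larger-smaller transpositions. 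I would do this step by step, by translating each of the three row-swap operations $s_i \star$ used in the construction into a sequence of moves on the symmetric word. Each individual $s_i \star$ operation on a table of integers produces a word that differs from the input by Knuth equivalences followed by larger-smaller transpositions, by the same reasoning cited in \cite[Remark~5.8]{BG1} for Lemma \ref{L:Clargersmaller}; we then simply paste these transformations together, together with the mirror moves on the right half of $\word(A)$ that are forced by skew-symmetry about the origin.

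The main obstacle will be the bookkeeping in the third step: $\word(A)$ is skew symmetric, whereas each $s_i \star$ operation acts on a left-justified three-row table, so for each larger-smaller transposition carried out in the left half of $\word(A)$ we must simultaneously perform the negated mirror transposition in the right half, verifying that the result remains a skew-symmetric word matching $W$. Once the word-level identification $\word(A) \leadsto W \sim^K \word(c \cdot A)$ is verified using only Knuth equivalences and larger-smaller transpositions, Lemma \ref{L:largersmaller} gives the inequality $\part(\RS(A)) \leq \part(\RS(c \cdot A))$ as an immediate consequence.
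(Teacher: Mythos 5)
The proposal is correct in its framing (Case~1 applies, Theorem~\ref{T:LT3} is the right tool, and Lemma~\ref{L:largersmaller} closes the argument), but it is missing the step that carries essentially all the weight in the paper's proof.

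The difficulty you gloss over is this: the row-swap operations $s_2 s_1 s_2$ act on $A^{L^-}$, and $\word(A^{L^-})$ is \emph{not} a contiguous subword of $\word(A)$ (nor is it the ``left half'' of $\word(A)$). Write
\[
\word(A) = (a_1,\dots,a_{2q+1},\, b_1,\dots,b_p,\, 0,\, -b_p,\dots,-b_1,\, -a_{2q+1},\dots,-a_1),
\]
whereas
\[
\word(A^{L^-}) = (a_1,\dots,a_q,\, b_1,\dots,b_p,\, -a_{2q+1},\dots,-a_{q+1}).
\]
The entries $a_{q+1},\dots,a_{2q+1}$ and their negatives sit on opposite sides of the central $0$ in $\word(A)$, but inside $\word(A^{L^-})$ they appear consecutively. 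So there is no sense in which ``translating each $s_i\star$ into moves on the symmetric word, together with mirror moves on the right half'' can even get started: before one can see $\word(A^{L^-})$ and its negated mirror sitting inside the symmetric word, one must first move $a_{q+1},\dots,a_{2q+1}$ (and, symmetrically, $-a_{q+1},\dots,-a_{2q+1}$) across the $0$. That repositioning is the actual content of the paper's proof: it is done by an explicit sequence of larger-smaller transpositions, one index at a time, shuffling $a_i$ rightward past $b_1,\dots,b_p$ and then past $0$, and $-a_i$ leftward past $-b_p,\dots,-b_1$ and then past $0$ (using that $a_i>0>b_j$ and $a_i>0>-a_i$). The crucial observation that makes each elementary swap a larger-smaller transposition --- and the termination condition $a_{q+1}>0$ --- does not appear in your proposal. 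Once that shuffling produces the word $W' = (\word(A^{L^-}),\,0,\,\overline{\word(A^{L^-})})$, Theorem~\ref{T:LT3} (together with the Knuth equivalence $\word(A^{L^-})\sim^K\word(s_2 s_1 s_2 A^{L^-})$, applied to both halves of $W'$) finishes the argument. In short: you correctly identified what supporting results to cite, but the part you treat as ``bookkeeping'' --- getting from $\word(A)$ to the unfolded word $W'$ --- is the real mathematical step, and your proposal gives no mechanism for it.
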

\begin{proof}
    Let
$(a_1, \dots, a_{2q+1}, b_1, \dots, b_p, 0, -b_p, \dots, -b_1, -a_{2q+1}, \dots, -a_1) = \word(A)$.
Due to Theorem \ref{T:LT3},
since $\word(A^{L^-}) = (a_1, \dots, a_q, b_1, \dots, b_p, -a_{2q+1}, \dots, -a_{q+1})$,
it suffices to show that
\[
(a_1, \dots, a_q, b_1, \dots, b_p, -a_{2q+1}, \dots, -a_{q+1},0,a_{q+1}, \dots, a_{2q+1}, -b_p, \dots, -b_1, -a_q, \dots, -a_1)
\]
can be obtained from
$\word(A)$  by a sequence of larger-smaller transpositions and Knuth equivalences.
First we can swap $a_{2 q+1}$ with its right neighbour, and $-a_{2q+1}$ with its left neighbour repeatedly until
we get a word with $a_{2 q+1}, 0 -a_{2 q+1}$ in the middle, then we can swap $a_{2 q+1}$ with $0$, then swap $a_{2 q+1}$ with $-a_{2 q+1}$,
then swap $0$ with $-a_{2 q+1}$ so that
we have $-a_{2 q+1}, 0, a_{2 q+1}$ in the middle of our word.
Now we can repeat this process with $a_{2q}$ and $a_{2q}$, then $a_{2q-1}$ and $-a_{2q-1}$, and so on.
Eventually, since $a_{q+1} > 0$, we will get
\[
(a_1, \dots, a_q, b_1, \dots, b_p, -a_{2q+1}, \dots, -a_{q+1},0,a_{q+1}, \dots, a_{2q+1}, -b_p, \dots, -b_1, -a_q, \dots, -a_1).
\]
\end{proof}

\section{The general case} \label{S:general}

Now we return to the case of general $\bp$ as in \eqref{e:bp}.  As usual $P$ is
the symmetric pyramid of $\bp$ with rows labelled $1, \dots, r, 0, -r, \dots, -1$ from
top to bottom.

\subsection{The component group action} \label{SS:compgroup}
In this section we describe the action of the component group $C$ on the subset
of $\Tab^\le(P)$ corresponding to finite dimensional $U(\g,e)$-modules.
The discussion here is completely analogous to the situation for even
multiplicity nilpotent elements as described in \cite[\S5.5]{BG2}, so we
are quite brief.  We use the notation for the component group $C$
from \S\ref{ss:comp}.

The operation of $c$ has been defined on three row s-tables
in \S\ref{S:Ccase} and \S\ref{S:Bcase}, and
this can be extended to any s-table by just
acting on the middle three rows. To define the action of the $c_k$
we proceed in exact analogy with \cite[\S5.5]{BG2}.  That is we use row swapping
operations $\bar s_i \star$ to move row $i_k$ to row $r$, then we apply $c$
and then we apply the reverse row swaps.  So for $A \in \Tab^{\le}(P)$, and $\tau
= \bar s_{i_k} \bar s_{i_k+1} \dots \bar s_{r-1} \in \bar S_r$ we have
$$
c_k \cdot A = \tau^{-1} \star ( c \cdot (\tau \star A)).
$$
Of course, this will not be defined for all $A \in \sTab^{\le}(P)$, but the
following proposition can be proved in the same way as \cite[Proposition 5.5]{BG2},
and we require Proposition \ref{P:changehw} for the proof.

\begin{Proposition} \label{P:caction3}
Let $A \in \sTab^\leq(P)$ and suppose that $L(A)$ is finite dimensional.
Then $c_k \cdot A$ is defined and $L(c_k \cdot A) \cong c_k \cdot L(A)$.
\end{Proposition}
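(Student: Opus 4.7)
The plan is to mimic the proof of \cite[Proposition 5.5]{BG2} using the three-row case results established in the previous two sections, together with the change-of-highest-weight-theory result Proposition \ref{P:changehw}. The key observation is that $c_k$ is ``localized'' on the pair of rows of $P$ of length $p_{i_k}$, so after conjugating by a suitable row swap to move these rows adjacent to the middle row, the $c_k$-action reduces to the middle-three-row $c$-action already treated in \S\ref{S:Ccase} and \S\ref{S:Bcase}.

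More explicitly, let $\tau = \bar s_{i_k} \bar s_{i_k+1} \cdots \bar s_{r-1} \in \bar S_r$, chosen so that in $\tau \cdot P$ the two rows of length $p_{i_k}$ occupy positions $r$ and $-r$, so that the three middle rows of $\tau \cdot P$ have lengths $(p_{i_k}, p_0, p_{i_k})$. Since $L(A)$ is finite dimensional, Lemma \ref{L:Aplus} guarantees that $\tau \star A \in \sTab^\leq(\tau \cdot P)$ is defined, and Proposition \ref{P:changehw} gives $L(A) \cong L_\tau(\tau \star A)$. Focusing on the three middle rows of $\tau \star A$, which form a three-row s-table of the type handled in \S\ref{S:Ccase} (if $\g = \sp_{2n}$) or \S\ref{S:Bcase} (if $\g = \so_{2n+1}$), Theorem \ref{T:caction1} or \ref{T:caction2} applies to show that $c \cdot (\tau \star A)$ is defined and that $c \cdot L_\tau(\tau \star A) \cong L_\tau(c \cdot (\tau \star A))$. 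The module $L_\tau(c \cdot (\tau \star A))$ remains finite dimensional, so Lemma \ref{L:Aplus} applies again to guarantee that $\tau^{-1} \star (c \cdot (\tau \star A))$ is defined, and a second application of Proposition \ref{P:changehw} translates back to $L(c_k \cdot A)$.

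The principal obstacle is verifying that the combinatorial recipe for $c$ on the middle three rows genuinely realizes the abstract component group element $c_k \in C$ acting on $U(\g,e)$-modules. For this one uses the explicit representative $\dot c_k \in G$ from \S\ref{ss:comp}, which acts trivially on all rows of $K$ except the two of length $p_{i_k}$; after conjugating by the row swap $\tau$ this identifies $c_k$ with the generator of the component group attached to the ``three-row Levi subalgebra'' of $U(\g,e)$ in the sense of \cite[\S3]{BG1}. The compatibility of the component group action with changes of highest weight theory then follows by the same intertwining argument as in the even multiplicity case \cite[Proposition 5.5]{BG2}, since Proposition \ref{P:changehw} supplies the requisite isomorphism $L_\sigma(B) \cong L_{\tau \sigma}(\tau \star B)$ functorially in $B$, and the primitive ideal calculations of Theorem \ref{T:BGKt3} ensure that these isomorphisms intertwine the $C$-actions.
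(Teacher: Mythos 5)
Your proposal follows essentially the same route as the paper, which itself just declares that the proposition ``can be proved in the same way as [BG2, Proposition 5.5]'' using Proposition \ref{P:changehw}: conjugate by the row swap $\tau$ to move the $p_{i_k}$-rows adjacent to the middle, invoke Lemma \ref{L:Aplus} and Proposition \ref{P:changehw} to keep everything defined and to translate between highest weight theories, reduce to the three-row $c$-action handled by Theorems \ref{T:caction1} and \ref{T:caction2} via the Levi subalgebra mechanism of \cite[\S3]{BG1}, and then undo the row swap. You have correctly identified all the ingredients the paper's citation entails, including the nontrivial point that the combinatorial $c_k$-recipe must be matched against the abstract component group element using the explicit representative $\dot c_k$ and an intertwining argument as in \cite[Proposition 5.5]{BG2}.
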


\subsection{Proof of main theorem} \label{ss:generalproof}

Now we are in a position to prove Theorem \ref{T:mainintro}.

\begin{proof}[Proof of Theorem \ref{T:mainintro}]
The statement in the theorem about the component group action is given by Proposition \ref{P:caction3}.

Suppose that $A$ is justified row equivalent to column strict. Then $L(A)$ is finite dimensional
by Theorems \ref{T:recs}, \ref{T:BV} and \ref{T:BGKt2} and
thus $b \cdot L(A)$ is finite dimensional for any $b \in C$ by Proposition \ref{P:caction3}.

We are left to prove that if $L(A)$ is finite dimensional, then $A \in \sTab^\cc(P)$.
We prove this by induction on $r$.  The case $r=0$ is trivial,
and the case $r=1$ is given by Lemmas \ref{L:recsC} and \ref{L:recsB} and
Theorems \ref{T:caction1} and \ref{T:caction2}.

Now assume that $L(A)$ is finite dimensional and $r \ge 2$.
Using an inductive argument based on ``Levi
subalgebras'' of $U(\g,e)$ just as in the proof of \cite[Theorem 5.13]{BG1}
we may assume that $A^2_{-2}$ is justified row equivalent to
column strict, where $A^2_{-2}$ denotes the s-table
obtained from $A$ by removing rows 1 and $-1$.
Also by Lemma \ref{L:Aplus} we have that $A_r^1$ is justified row equivalent to column strict,
where $A_r^1$ is the table formed by rows 1 to $r$ of $A$.

Therefore, we can permute entries in the left justification of $A^2_{-2}$ so that all the columns
are strictly decreasing.  Furthermore, we can place each of the entries in row $1$ of $A$ over
a column
so each entry is larger than the entry immediately
below it.
Then we can place each of the entries of  row $-1$ of $A$ under a column in the left justification
of $A^2_{-2}$ so that each entry is smaller than the entry above it; and we can do this skew symmetrically
in the sense that if $a$ is an entry in row $1$ of $A$ and $a$ is placed over a column whose top entry is $b$,
then we can place $-a$ under a column whose bottom entry is $-b$.  Let $A_l$ denote the resulting diagram.

Let $\bq = \part(\RS(A))$.  As explained below the conditions above along with the
Theorem \ref{T:BV} give restrictions on the possibilities
for $\bq$.  The proof is completed with combinatorial arguments that show
that either $\bq = \bp$, or that $i_1 = 1$, and that $\part(\RS(c_1 \cdot A)) = \bp$.
So that by Theorem~\ref{T:recs}, either $A$ or $c_1 \cdot A$ is row equivalent to column strict.

In the diagram $A_l$, let $x$ be the number of columns which go through all the rows,
let $y$ columns which go through all the rows except the top row (so $y$ is also the number of
columns which go through all the rows except the bottom row),
and let $z$ be the number of columns which go through all the rows except the top and bottom row.
Further, let $u$ be the number of columns which go through all the rows except the middle row,
let $v$ be the number of columns which go through all the rows except
the top row and the middle row (so $v$ is also the number of
columns which go through all the rows except the middle row and the bottom row),
and let $w$ be the number of columns which go through all the rows except the top, middle, and bottom rows.
Note that $x+y+u+v= p_1$, and $x+2y+z+u+2v+w = p_2$.
So we have $x$ strictly decreasing columns of length $2r+1$,
$2y + u$ strictly decreasing columns of length $2r$,
$z + 2v$ strictly decreasing columns of length $2r-1$,
and $w$ strictly decreasing columns of length $2r-2$.

By counting the lengths of the other columns in $A_l$ similarly, and
using Lemma \ref{L:altcRS} we can conclude that
\[
\bq^T  \geq ((2r+1)^x,(2r)^{2y+u},(2r-1)^{z+2v},(2r-2)^w,(2r-4)^{p_3-p_2}, \dots, 2^{p_{r-1}-p_r})
\]
if $p_0 \leq p_{r-1}$;
and
\begin{align*}
\bq^T  \geq &((2r+1)^x,(2r)^{2y},(2r-1)^{z},(2r-3)^{p_3-p_2},
\dots, (2r-2k+5)^{p_{k-1}-p_{k-2}}, \\
  &\quad
(2r-2k+3)^{p_0-p_k-1},
(2r-2k+2)^{p_k-p_0}, (2r-2k)^{p_{k+1}-p_k},
\dots, 2^{p_{r-1}-p_r}).
\end{align*}
if $p_0 > p_{r-1}$ and $k$ is the number such that $p_i \ge p_0$ if and only if $i \ge k$
(note that in this case we have $u=v=w=0$).

Thus we get that
\[
   \bq \leq (p_r^2, p_{r-1}^2, \dots, p_2^2, p_2-w, x+2y+u, x) \quad \text{if $p_0 \leq p_{r-1}$}
\]
and
\[
   \bq \leq (p_r^2, p_{r-1}^2, \dots, p_{k-1}^2, p_0, p_k^2, \dots, p_2^2, p_1-z, x) \quad \text{if $p_0 > p_{r-1}$.}
\]

Since we also have $\bp$ as a lower bound of $\bq$, this implies that
\[
   \bq = (p_r^2, p_{r-1}^2, \dots, p_2^2, a, b, c) \quad \text{if $p_0 \leq p_{r-1}$}
\]
and
\[
   \bq = (p_r^2, p_{r-1}^2, \dots, p_{k}^2, p_0, p_{k-1}^2, \dots, p_2^2, a, b) \quad \text{if $p_0 > p_{r-1}$,}
\]
for positive integers $a,b,c$.
Since $\content(\bq) = \content(\bp)$,
we get a very limited number of possibilities for $a,b,c$,
as explained below.

From now we restrict to the case $\g = \sp_{2n}$ in this proof, as the
case $\g = \so_{2n+1}$ is entirely similar; in some places we would require references
from Section \ref{S:Bcase} rather than Section \ref{S:Ccase}.

We know that $p_0$ must be even.
If $p_0 < p_1$ and $p_1$ is even, then we must have
$(a,b,c) = (p_1,p_1,p_0)$
or $(a,b,c) = (p_1+1,p_1-1,p_0)$.
If $p_0 < p_1$ and $p_1$ is odd, then
$(a,b,c) = (p_1,p_1,p_0)$.
If $p_1 < p_0 < p_2$, then $p_1$ is even and $(a,b,c) = (p_0,p_1,p_1)$.
Finally, if $p_0 > p_2$, then $p_1$ is even and $(b,c) = (p_1,p_1)$.

By Theorem \ref{T:recs} if $\bq = \bp$, then
$A$ is justified row equivalent to column strict, and we are done.
So for the rest of this proof we will assume that $\bq \neq \bp$,
so
we are assuming that
$p_0 < p_r$, $p_r$ is even, and
\begin{equation} \label{EQ:q1}
   \bq = (p_r^2, \dots, p_2^2, p_1+1, p_1-1, p_0).
\end{equation}
It is be useful to record that
\begin{equation} \label{EQ:qT1}
\bq^T =  ((2r+1)^{p_0}, (2r)^{p_1-p_0-1}, (2r-1)^2, (2r-2)^{p_2-p_1-1}, (2r-4)^{p_3-p_2}, \dots, 2^{p_{r-1}-p_r}).
\end{equation}
Let $\sigma = s_{r-1} \dots \bar s_2 \bar s_1$ and $A' = \sigma \star A$,
and $\RS(A') = \RS(A)$ by Proposition \ref{P:changehw}.
Then the lengths of the middle three rows of
$A'$ are given by
$p_1,p_0,p_1$.
Let $B$ be the middle 3 rows of $A'$.

We claim that $\part(\RS(B)) = (p_1+1,p_1-1,p_0)$.
To see this,
first we suppose that
$\part(\RS(B)) = (p_r,p_r,p_0)$.
Then $B$ is justified row equivalent to column strict.
Now, since $(A')^1_r$ is justified row equivalent to column strict,
this allows us to find $p_0$ disjoint decreasing words
of length $2r+1$, which are disjoint from a further $p_1-p_0$ disjoint decreasing words
of length $2r$.
Thus by Lemma \ref{L:altcRS}
$\bq^T \geq ((2r+1)^{p_0}, (2r)^{p_1-p_0}, *)$, which contradicts \eqref{EQ:qT1}.
Now we also cannot have that any part of $\part(\RS(B))$ is larger than
$p_r+1$, since then we could use the fact that all the rows of $A'$ are increasing to conclude that
$\part(\bq)$ would be strictly larger than
a partition of the form
\[
   (p_1^2, p_2^2, \dots, p_{r-1}^2, p_r+1, *),
\]
which contradicts
\eqref{EQ:q1}.

Now we have, by Theorem \ref{T:caction1},
that $\part(\RS(c \cdot B)) = (p_0,p_1,p_1)$.
We also have by Lemma \ref{L:Clargersmaller}
that $\part(\RS(c_1 \cdot A)) \leq \part(\RS(A))$.

We need to argue that we can find enough maximal or near maximal length descending chains in $c \cdot A'$
to force $\RS(c_1 \dot A)$ to have shape $\bp$.
We have, by Lemma \ref{L:Aplus}, that $(c \cdot A')^1_r$ is justified row equivalent to column strict.
Further, by Theorem \ref{T:recs}, we have that $c \cdot B$ is justified row equivalent to column strict.

We can find $p_0$ descending strings of length 3 and $p_1-p_0$ strings of length 2, and all these strings
start in row $r$ and end in row $-r$.  Since $(c \cdot A')^1_r$ is justified row equivalent to column strict,
it has $p_1$ strings of length $r$ ending in row $r$, and $(c \cdot A')^{-r}_{-1}$ has $p_1$ strings of length $r$
starting in row $-1$.  So we can glue these strings together along their entries in rows $1$ and $-1$
to obtain $p_0$ disjoint decreasing strings of length $2r+1$ which are disjoint from $p_1-p_0$ disjoint decreasing strings of length $2r$.
So if $\bq' = \part(\RS(c \cdot A))$, we can conclude that
$\bq'^T \geq ((2r+1)^{p_0}, (2r)^{p_1-p_0},*)$, which implies that $\bq' = \bp$, so $c_1 \cdot A$ is justified row equivalent to column strict, as required.
\end{proof}

Finally, this theorem along with
Theorems \ref{T:BGKt1} and \ref{T:recs2} immediately imply the following classification of the primitive ideals
with associated variety equal to $\overline{G \cdot e}$.
\begin{Corollary}
The set of primitive ideals with associated variety $\overline{G \cdot e}$
   is equal to
\[
    \{ \Ann_{U(\g)} L(\lambda_A) \mid A \in \sTab^\cc(P) \}.
\]
\end{Corollary}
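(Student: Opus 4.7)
The plan is to transport the classification of finite dimensional irreducible $U(\g,e)$-modules with integral central character from Theorem \ref{T:mainintro} over to the primitive ideal side using Losev's map $\cdot^\dagger$ and its compatibility with highest weight modules via Theorem \ref{T:BGKt3}. The substantive work has already been done in the proof of Theorem \ref{T:mainintro}; what remains is bookkeeping.

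For the inclusion ``$\subseteq$'', I would take a primitive ideal $I$ of $U(\g)$ with integral central character and associated variety $\overline{G \cdot e}$, i.e.\ $I \in \Prim_e U(\g)$. By property (3) of Losev's map from \S\ref{SS:Losev}, the restriction of $\cdot^\dagger$ to $\Prim_0 U(\g,e)$ is surjective onto $\Prim_e U(\g)$, so $I = (\Ann_{U(\g,e)} M)^\dagger$ for some finite dimensional irreducible $U(\g,e)$-module $M$. By property (1) (preservation of central characters) $M$ has integral central character, so Theorem \ref{T:mainintro} gives $M \cong L(A)$ for some $A \in \sTab^\le(P)$ which is $C$-conjugate to some $B \in \sTab^\cc(P)$. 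By property (4) (fibres of $\cdot^\dagger$ are $C$-orbits) together with Proposition \ref{P:caction3} identifying the $C$-action on s-tables with the $C$-action on irreducible modules, $(\Ann_{U(\g,e)} L(A))^\dagger = (\Ann_{U(\g,e)} L(B))^\dagger$, and Theorem \ref{T:BGKt3} identifies this common ideal as $\Ann_{U(\g)} L(\lambda_B)$, as required.

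For the inclusion ``$\supseteq$'', given $B \in \sTab^\cc(P)$, I may pass through its row equivalence class to assume $B \in \sTab^\le(P)$, which leaves the weight $\lambda_B$ unchanged modulo $W_0$ and so preserves $\Ann_{U(\g)} L(\lambda_B)$. Then Theorem \ref{T:mainintro} ensures that $L(B)$ is a finite dimensional irreducible $U(\g,e)$-module. Property (3) of $\cdot^\dagger$ places $(\Ann_{U(\g,e)} L(B))^\dagger$ in $\Prim_e U(\g)$, and Theorem \ref{T:BGKt3} identifies this primitive ideal with $\Ann_{U(\g)} L(\lambda_B)$.

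There is no serious obstacle: the combinatorial core of the argument is Theorem \ref{T:mainintro} itself, and the rest is simply chasing definitions through Losev's map. The role of Theorem \ref{T:recs2} is to sharpen the parametrization by showing that different elements of $\sTab^\cc(P) \cap \sTab^\le(P)$ produce different primitive ideals, which recovers the more precise form of the corollary stated in the introduction.
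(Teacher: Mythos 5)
Your proof follows the same route as the paper's one-sentence argument: transport through Losev's map $\cdot^\dagger$, compatibility with highest weight modules via Theorem \ref{T:BGKt3}, and the combinatorial classification Theorem \ref{T:mainintro} carry all the weight, and the remainder is bookkeeping, exactly as the paper declares. You are also right to reinstate the phrase ``integral central character'' on the left-hand side; the version of this corollary in the introduction includes it, and it is forced by the right-hand side in any case, since every $\lambda_A$ for $A\in\sTab(P)$ is an integral weight.

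The one real wrinkle is in the ``$\supseteq$'' direction. You justify replacing $B\in\sTab^\cc(P)$ by the row-sorted representative $B'\in\sTab^\le(P)$ on the grounds that this ``leaves the weight $\lambda_B$ unchanged modulo $W_0$ and so preserves $\Ann_{U(\g)}L(\lambda_B)$.'' That inference is not valid: by Theorem \ref{T:tau} the annihilator is a $\tau$-equivalence invariant, and $\tau$-equivalence is strictly finer than $W_0$-conjugacy, so $W_0$-conjugate integral weights need not give the same primitive ideal. (Indeed, were Theorem \ref{T:recs2} literally valid on all of $\sTab^\cc(P)$ rather than on $\sTab^\cc(P)\cap\sTab^\le(P)$, it would assert the opposite of your claim for distinct row-equivalent s-tables.) The intended reading, consistent with the introduction's version of this corollary and with the use of Theorem \ref{T:recs} inside the proof of Theorem \ref{T:recs2}, is that the set ranges over $\sTab^\cc(P)\cap\sTab^\le(P)$; under that reading the passage through the row equivalence class is unnecessary, Theorem \ref{T:BGKt3} applies directly to $B$, and your argument goes through without change.
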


\appendix

\section{An alternative version of the Barbasch--Vogan algorithm}

In this appendix we consider the alternative version of the Barbasch--Vogan
algorithm for $\so_{2n+1}$ mentioned in \S\ref{ss:BV} above.  Our main result is Corollary \ref{C:BV}, which
shows that this adapted version gives the same output as the original version.  Below
we recall the algorithm then in the subsequent subsections construct
the  machinery required to prove Corollary \ref{C:BV}.

Some terminology and notation used in this section is as follows. By a {\em Young
diagram} we mean a finite collection of boxes, or cells, arranged in left-justified rows, with the row lengths weakly decreasing.  We often identify a Young diagram with its underlying partition.  A tableau is a filling of a
Young diagram by integers with weakly increasing rows and strictly decreasing columns.  We write $\part(T)$ for the
partition underlying a tableau $T$.  The Robinson--Schensted algorithm is denoted by $\RS$.

\subsection{The algorithms}

We need to define the {\em content of a partition}.
Let $\bq = (q_1 \le q_2 \le \dots, \le q_m)$ be a partition.
By inserting $0$ at the beginning if necessary, we may assume that $m$ is odd.
Let $(s_1, \dots, s_k)$, $(t_1, \dots, t_l)$ be such that
as unordered lists,
$(q_1, q_2+1, q_3+2, \dots, q_r+r-1)$ is equal to
$(2 s_1, \dots, 2 s_k, 2 t_1 +1, \dots, 2 t_l+1)$.
Now we define the content of $\bq$ to be the unordered list
\[
  \content(\bq) = (s_1, \dots, s_k, t_1, \dots, t_l).
\]

We now state the Barbasch--Vogan algorithm from \cite{BV1a} for the case
$\g = \so_{2n+1}$ in purely combinatorial terms.

\noindent {\bf Algorithm:}

\noindent {\em Input:} $\aa = (a_1 , \dots , a_n , -a_n , \dots , -a_1 )$ a skew symmetric string of
integers.

\noindent {\em Step 1:} Calculate $\bq =
\part(\RS(a_1 , \dots , a_n , -a_n , \dots , -a_1 ))$.

\noindent {\em Step 2:}
Calculate $\content(\bq)$. \\
Let $(u_1 \leq \dots \leq u_{2k+1})$ be the sorted list with the same entries as $\content(\bq)$. \\
For $i=1,\dots, k+1$ let $s_i = u_{2 i-1}$. \\
For $i=1,\dots, k$ let $t_i = u_{2i}$.

\noindent {\em Step 3:}
Form the list $(2 s_1+1, \dots, 2 s_{k+1}+1, 2 t_1,\dots, 2 t_{k})$. \\
In either case let $(v_1< \dots < v_k)$ be this list after sorting.

\noindent {\em Output:} $\BV(\aa) = \bq' = (v_1, v_2-1, \dots, v_{2k+1}-2k)$.

\medskip

The modified version is denoted by $\BV'$ and works in exactly the same way as $\BV$ except that
it calculates
$\RS(a_1 , \dots , a_n , 0, -a_n , \dots , -a_1 )$
instead of
$\RS(a_1 , \dots , a_n , -a_n , \dots , -a_1 )$ in Step 1.

\subsection{Domino Tableaux}
We require some facts about domino tableaux, which we collate below.

There are two types of domino tableaux, those with an even number of boxes and those with an odd number of boxes.
A {\em domino tableau with an even number of boxes} is a Young diagram that has been tiled with $2 \times 1$ and $1 \times 2$ dominos,
where each domino is labeled with a positive integer, such that the rows are increasing and
the columns are decreasing.
A {\em domino tableau with an odd number of boxes} is the same as a domino tableau with an even number of boxes,
except it also has a $1 \times 1$ box labeled with $0$, which must necessarily occur in the lower left position.

For example,
\[
\begin{array}{c}
\begin{picture}(60,80)
   \put(0,0){\line(1,0){60}}
   \put(19,20){\line(1,0){40}}
   \put(0,40){\line(1,0){20}}
   \put(20,60){\line(1,0){20}}
   \put(0,80){\line(1,0){20}}
   \put(0,0){\line(0,1){80}}
   \put(20,0){\line(0,1){80}}
   \put(20,20){\line(0,1){40}}
   \put(40,20){\line(0,1){40}}
   \put(60,0){\line(0,1){20}}
 \put(10,20){\makebox(0,0){{1}}}
 \put(40,10){\makebox(0,0){{2}}}
 \put(30,40){\makebox(0,0){{4}}}
 \put(10,60){\makebox(0,0){{3}}}
\end{picture}
\end{array}
\quad \text{and} \quad
\begin{array}{c}
\begin{picture}(60,60)
   \put(0,0){\line(1,0){60}}
   \put(0,20){\line(1,0){60}}
   \put(0,60){\line(1,0){40}}
   \put(0,0){\line(0,1){60}}
   \put(20,0){\line(0,1){60}}
   \put(40,20){\line(0,1){40}}
   \put(60,0){\line(0,1){20}}
 \put(10,10){\makebox(0,0){{0}}}
 \put(40,10){\makebox(0,0){{1}}}
 \put(10,40){\makebox(0,0){{2}}}
 \put(30,40){\makebox(0,0){{3}}}
\end{picture}
\end{array}
\]
are domino tableaux.

Given a domino tableau $R$, we let $\part(R)$ denote the partition underlying $R$, i.e.\ the partition
given by the row lengths of $R$.
We say that a partition has {\em domino shape} if it is the underlying partition of a domino tableau.

The following lemma is straightforward to prove by induction.

\begin{Lemma} \label{L:evenodd}
Let $\bp = (p_1 \leq p_2 \leq \dots \leq p_m)$ be a partition,
where $p_1$ may be $0$ and $m$ is odd.
Choose $r_1, \dots, r_k$ and $s_1, \dots, s_l$ so that
$(p_1, p_2+1, \dots, p_m+m-1)$ is equal to $(2 r_1, \dots, 2 r_k, 2 s_1 +1, \dots, 2 s_l +1)$ as unordered lists.
     If $\bp$ has domino shape and has an  even number of boxes, then $k=l+1$.
     If $\bp$ has domino shape and has an odd number of boxes, then $k+1=l$.
\end{Lemma}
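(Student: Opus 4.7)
My plan is to induct on the number of boxes of $\bp$, with base cases of $0$ boxes and $1$ box handled directly. When $\bp = (0, \dots, 0)$ has $0$ boxes, the shifted list $(p_1, p_2+1, \dots, p_m+m-1)$ is $(0, 1, 2, \dots, m-1)$; since $m$ is odd this has $\frac{m+1}{2}$ even entries and $\frac{m-1}{2}$ odd entries, giving $k = l + 1$ as required. When $\bp = (0, \dots, 0, 1)$ has $1$ box, the only change in the shifted list is that the even entry $m-1$ is replaced by the odd entry $m$, so $k = \frac{m-1}{2}$ and $l = \frac{m+1}{2}$, giving $k + 1 = l$.

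For the inductive step, assume $\bp$ has at least $2$ boxes. I would remove the domino with the highest label to obtain a domino tableau with shape $\bp'$; this is always possible because the highest label sits at an outer corner of the diagram, so the corresponding two cells can be stripped without breaking the partition property. The new shape $\bp'$ has $2$ fewer boxes (so the same parity), and by the induction hypothesis it satisfies the claim. It therefore suffices to show that removing the domino preserves both $k$ and $l$ individually.

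There are two cases. If the removed domino is horizontal in row $i$, then only $p_i$ changes and it decreases by $2$, so $p_i + (i-1)$ keeps the same parity and $(k, l)$ is unchanged. If the removed domino is vertical in rows $i, i+1$, then $p_i$ and $p_{i+1}$ both decrease by $1$; the key geometric point is that the two removed cells share a column, which forces $p_i = p_{i+1}$ in $\bp$ (and hence $p_i' = p_{i+1}'$ in $\bp'$). Consequently the shifted entries $p_i' + (i-1)$ and $p_{i+1}' + i$ differ by exactly $1$ and so have opposite parities, while the shifted entries for $\bp$ (each larger by $1$) again have opposite parities; hence positions $i, i+1$ contribute one even and one odd entry in both shifted lists, leaving $k, l$ unchanged. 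The main subtlety is the vertical case, which hinges on the $p_i = p_{i+1}$ observation for removable vertical dominoes; once this is noted, the induction closes and delivers $k = l+1$ in the even-box case and $k+1 = l$ in the odd-box case.
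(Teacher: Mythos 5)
Your proof is correct and is the natural induction argument; the paper simply asserts the lemma is "straightforward to prove by induction" without giving details, and your argument fills in exactly what is suggested. The base cases are computed correctly, and the key case analysis in the inductive step is sound: a removable horizontal domino decreases a single $p_i$ by $2$ and so preserves the parity of $p_i + i - 1$, while a removable vertical domino forces $p_i = p_{i+1}$ (since both removed cells must be terminal in their rows and share a column), so positions $i$ and $i+1$ contribute one even and one odd shifted entry both before and after removal. One small point worth making explicit: you implicitly hold $m$ fixed throughout the induction, which may introduce more than one leading zero after a removal; this is harmless because prepending a pair of zeros to $\bp$ (keeping $m$ odd) adds exactly one even entry and one odd entry to the shifted list, so the difference $k - l$ is unchanged. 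With that observation recorded, the induction closes cleanly.
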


Let $T$ be a tableau whose boxes are labelled by the integers $-n, \dots, -1, 1, \dots, n$ or
the integers $-n,\dots, -1, 0, 1, \dots, n$.
We recall an algorithm $\DT$ which takes as input such a tableau and outputs a domino tableau;
it was defined in \cite{BV1a}.
To define $\DT(T)$, first note that
$-n$ must occur in the lower left corner of $T$.  Swap $-n$ with the smaller of its neighbours which lie above or
to the right of $-n$.  Continue swapping $-n$ with its smaller neighbour which is either above or right of it.
If the last number that $-n$ is swapped with is not $n$ then we say that $\DT(T)$ is undefined.
Otherwise replace the squares with $-n$ and $n$ with a domino containing
$n$.
Now repeat this procedure for $1-n, 2-n, \dots, -1$ treating any
squares which have been replaced with dominos as if they were not present.
If for any $i$ the last number that $-i$ is replaced with is not $i$ then $\DT(T)$ is undefined.
Otherwise we eventually get a domino tableau.

For example, suppose
\[
 T =
\RS(-2,-3,1,0,-1,3,2)  =
\begin{array}{c}
\begin{picture}(60,60)
   \put(0,0){\line(1,0){60}}
   \put(0,20){\line(1,0){60}}
   \put(0,40){\line(1,0){60}}
   \put(0,60){\line(1,0){20}}
   \put(0,0){\line(0,1){60}}
   \put(20,0){\line(0,1){60}}
   \put(40,0){\line(0,1){40}}
   \put(60,0){\line(0,1){40}}
 \put(8,10){\makebox(0,0){{-3}}}
 \put(28,10){\makebox(0,0){{-1}}}
 \put(50,10){\makebox(0,0){{2}}}
 \put(8,30){\makebox(0,0){{-2}}}
 \put(30,30){\makebox(0,0){{0}}}
 \put(50,30){\makebox(0,0){{3}}}
 \put(10,50){\makebox(0,0){{1}}}
\end{picture}
\end{array}.
\]
Now when we apply the above algorithm we first swap $-3$ with $-2$, then with $0$, then with $3$.
Now replace the boxes containing $3$ and $-3$ with a domino containing $3$.
This results in the following diagram:
\[
\begin{array}{c}
\begin{picture}(60,60)
   \put(0,0){\line(1,0){60}}
   \put(0,20){\line(1,0){60}}
   \put(0,40){\line(1,0){60}}
   \put(0,60){\line(1,0){20}}
   \put(0,0){\line(0,1){60}}
   \put(20,0){\line(0,1){60}}
   \put(40,0){\line(0,1){20}}
   \put(60,0){\line(0,1){40}}
 \put(8,10){\makebox(0,0){{-2}}}
 \put(28,10){\makebox(0,0){{-1}}}
 \put(50,10){\makebox(0,0){{2}}}
 \put(10,30){\makebox(0,0){{0}}}
 \put(40,30){\makebox(0,0){{3}}}
 \put(10,50){\makebox(0,0){{1}}}
\end{picture}
\end{array}.
\]
Now $-2$ first swaps with $-1$, then $2$, which results in the following diagram.
\[
\begin{array}{c}
\begin{picture}(60,60)
   \put(0,0){\line(1,0){60}}
   \put(0,20){\line(1,0){60}}
   \put(0,40){\line(1,0){60}}
   \put(0,60){\line(1,0){20}}
   \put(0,0){\line(0,1){60}}
   \put(20,0){\line(0,1){60}}
   \put(60,0){\line(0,1){40}}
 \put(8,10){\makebox(0,0){{-1}}}
 \put(40,10){\makebox(0,0){{2}}}
 \put(10,30){\makebox(0,0){{0}}}
 \put(40,30){\makebox(0,0){{3}}}
 \put(10,50){\makebox(0,0){{1}}}
\end{picture}
\end{array}.
\]
Finally $-1$ swaps with $0$ and then $1$, and the resulting domino tableau is
\begin{equation} \label{EQ:DT1}
  \DT(T) =
\begin{array}{c}
\begin{picture}(60,60)
   \put(0,0){\line(1,0){60}}
   \put(0,20){\line(1,0){60}}
   \put(20,40){\line(1,0){40}}
   \put(0,60){\line(1,0){20}}
   \put(0,0){\line(0,1){60}}
   \put(20,0){\line(0,1){60}}
   \put(60,0){\line(0,1){40}}
 \put(10,10){\makebox(0,0){{0}}}
 \put(40,10){\makebox(0,0){{2}}}
 \put(40,30){\makebox(0,0){{3}}}
 \put(10,40){\makebox(0,0){{1}}}
\end{picture}
\end{array}.
\end{equation}

Let $W$ is the Weyl group of type $B_n$ acting on $\{\pm 1,\dots, \pm n\}$ is the
natural way.  Then the image of $(-n, \dots, -1, 1, \dots, n)$ under the action of
some $\sigma \in W$ is called a {\em signed permutation of
$(-n, \dots, -1, 1, \dots, n)$}.  A {\em signed permutation of
$(-n, \dots, -1, 0,1, \dots, n)$} is defined similarly.

The following lemma follows from
\cite[Proposition 2.3.3]{Lea} and
\cite[Theorem 4.1.1]{Lea}.

\begin{Lemma}
    If $(a_1, \dots, a_n, -a_n, \dots, -a_1)$ is a signed permutation of
$(-n, \dots, -1, 1, \dots, n)$,
and
     $(b_1, \dots, b_n, 0,-b_n, \dots, -b_1)$ is a signed permutation of
$(-n, \dots, -1, 0,1, \dots, n)$,
then both $\DT(\RS(a_1, \dots, a_n, -a_n, \dots, -a_1))$ and $\DT(\RS(b_1, \dots, b_n, 0,-b_n, \dots, -b_1))$ are defined.
\end{Lemma}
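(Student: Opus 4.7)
The plan is to deduce the lemma from van Leeuwen's domino Robinson--Schensted correspondence. Recall that a signed permutation $\sigma \in W_n$ can be encoded as the skew-symmetric word $(\sigma(-n), \dots, \sigma(-1), \sigma(1), \dots, \sigma(n)) = (a_1, \dots, a_n, -a_n, \dots, -a_1)$. Van Leeuwen constructs an insertion procedure sending $\sigma$ to a pair of standard domino tableaux of the same shape on $n$ dominoes, and he shows that the ordinary tableau underlying the $P$-symbol of this correspondence is precisely $\RS(a_1, \dots, a_n, -a_n, \dots, -a_1)$; this is the content of \cite[Proposition 2.3.3]{Lea}. In particular, the output of $\RS$ applied to any word of this form is, by construction, the reading of a standard domino tableau.

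Next, I would identify $\DT$ with the inverse of van Leeuwen's insertion on the $P$-side. At step $i$, counting down from $n$, the algorithm locates the domino that the insertion would have produced from the pair $(-i,i)$, extracts it, and proceeds. Because the tableau under consideration lies in the image of the insertion, each extraction must succeed: the sequence of internal swaps performed by $\DT$ must terminate with $-i$ adjacent to $i$ in a removable domino. Hence $\DT(\RS(a_1, \dots, a_n, -a_n, \dots, -a_1))$ is defined.

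For the odd case, one invokes the analogous correspondence for signed permutations of $\{0, \pm 1, \dots, \pm n\}$ furnished by \cite[Theorem 4.1.1]{Lea}. Here the $P$-symbol is a domino tableau equipped with a distinguished $1 \times 1$ cell labelled $0$ in the lower-left corner, and the underlying ordinary tableau is $\RS(b_1, \dots, b_n, 0, -b_n, \dots, -b_1)$. The same inversion argument then shows that $\DT$ applied to this tableau is defined.

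The principal point requiring care is the dictionary between the two procedures: one must verify that extracting the domino labelled $i$ from the $P$-symbol, via the reverse of van Leeuwen's pair-insertion of $(-i,i)$, coincides step-for-step with the internal swap procedure used in the definition of $\DT$. This is a matter of unwinding van Leeuwen's insertion rule and matching bumping conventions, and it is the only nontrivial checking; once this translation is set up, the lemma follows immediately from the two cited results.
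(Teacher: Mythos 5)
Your argument is correct and essentially coincides with the paper's, which simply cites van Leeuwen's Proposition~2.3.3 and Theorem~4.1.1 and leaves the unpacking to the reader; you have supplied that unpacking faithfully. The one piece of checking you flag as nontrivial --- that $\DT$ matches the reverse of van Leeuwen's domino insertion on the $P$-symbol --- is itself a result of van Leeuwen (his Proposition~4.2.3, reproduced in the paper as Proposition~\ref{P:Le}), so it can be cited directly rather than re-verified by matching bumping conventions.
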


We may identify $W$ with the signed permutations of
$(-n, \dots, -1, 1, \dots, n)$ or the signed permutations of
$(-n, \dots, -1, 0,1, \dots, n)$.
Under this identification, we consider the algorithms of Garfinkle defined in \cite[\S2]{Ga1a}
to map a signed permutation of
$(-n, \dots, -1, 1, \dots, n)$
or
$(-n, \dots, -1, 0,1, \dots, n)$
to a domino tableau.
We denote these versions of Garfinkle's algorithm by $\operatorname{G}_0$ and $\operatorname{G}_1$ respectively.

The following proposition is
\cite[Proposition 4.2.3]{Lea}:
\begin{Proposition} \label{P:Le} $ $
\begin{enumerate}
\item[(i)]If $w$ is a signed permutation of
$(-n, \dots, -1, 1, \dots, n)$,
then
$\DT(\RS(w)) = \operatorname{G}_0(w)$.
\item[(ii)] If $w$ is a signed permutation of
$(-n, \dots, -1, 0,1, \dots, n)$,
then
$\DT(\RS(w)) = \operatorname{G}_1(w)$.
\end{enumerate}
\end{Proposition}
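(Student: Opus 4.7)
The plan is to prove both parts of Proposition \ref{P:Le} simultaneously by induction on $n$, exploiting the inductive structure of the two algorithms. First I would recall that Garfinkle's algorithms $\operatorname{G}_0$ and $\operatorname{G}_1$ proceed by successive domino insertion: one processes the signed permutation entry-by-entry, inserting each new value into a growing domino tableau via rules generalizing Schensted insertion. By contrast, $\DT \circ \RS$ first forms the ordinary standard tableau $\RS(w)$ and then processes the extremal pairs $\pm i$ in decreasing order of $|i|$, converting each pair into a single domino.

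The base case $n=0$ (for part (ii)) or $n=1$ is a direct check. For the inductive step, given a signed permutation $w$, let $w'$ be the signed permutation on $n-1$ letters obtained by deleting $\pm n$ (and relabelling). The key claim I would establish is that both $\DT(\RS(w))$ and $\operatorname{G}_i(w)$ are obtained from $\DT(\RS(w'))$ and $\operatorname{G}_i(w')$, respectively, by adjoining the domino labelled $n$ in the same position. By induction the two smaller tableaux agree, so the proposition reduces to showing that the domino added by Garfinkle's algorithm in its step inserting $n$ is placed in the same cells as the domino extracted by the $\DT$ algorithm when it processes $\pm n$ from $\RS(w)$.

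To verify this, I would analyse the RS bumping paths generated by the insertion of $n$ (which is never bumped, since it is maximal) and the subsequent entries including $-n$ at the symmetric position. The skew-symmetric structure of $w$ forces $n$ and $-n$ to occupy paired cells whose shape is exactly a horizontal or vertical domino, and whose removal by $\DT$ leaves the tableau $\RS(w')$ intact. Matching this with the cell-by-cell description of Garfinkle's insertion rules (separated into horizontal-bumping and vertical-bumping cases) yields the equality. For part (ii), the extra $0$ sitting in the middle requires the additional check that its interaction with the insertion path of $-n$ is compatible with the single $1 \times 1$ cell that $\operatorname{G}_1$ reserves for $0$ in the lower-left corner.

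The main obstacle is the case analysis for the bumping behaviour: depending on whether $|n|$ first enters a new row or bumps an existing entry, and depending on how the insertion of $-n$ interacts with the column already containing $n$, one gets several configurations, and in each one must check that the pair of cells occupied by $\pm n$ after RS insertion forms a connected domino whose orientation coincides with the one prescribed by Garfinkle's rules. Once this local comparison is established in every case, the inductive step is complete and both parts of the proposition follow.
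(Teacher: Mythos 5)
The paper does not actually give a proof of this proposition: the sentence immediately preceding it reads ``The following proposition is \cite[Proposition 4.2.3]{Lea},'' so the result is simply quoted from van Leeuwen's paper on the Robinson--Schensted and Sch\"utzenberger algorithms. Your attempt tries to supply a proof from scratch, which is a fundamentally different undertaking from what the paper does.

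Your inductive outline is plausible, but it has a real gap. You assert as a ``key claim'' that after peeling off the $n$-domino from $\DT(\RS(w))$ --- that is, performing the first $\DT$ slide of $-n$ and deleting the resulting domino labelled $n$ --- the tableau that remains is exactly $\RS(w')$, where $w'$ is $w$ with $\pm n$ deleted. This is the entire content of the inductive step on the $\DT\circ\RS$ side, and it is not automatic: Schensted insertion does not commute naively with removing a pair of letters from the interior of a word, and the $\DT$ slide is a jeu-de-taquin move, not an inverse insertion. The identity does hold, but it relies essentially on the skew-symmetry of $w$, which forces the slide of $-n$ to terminate exactly at $n$ and controls the shape of the residual tableau; justifying it requires a genuine argument, for instance via the symmetry of the growth diagram or a careful tracking of the two bumping paths of $\pm n$. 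You likewise acknowledge, but do not carry out, the further case analysis matching the cells occupied by the $n$-domino produced by Garfinkle's insertion with the cells vacated by the $\DT$ slide. As it stands, the two steps that carry the full weight of the proof are both left as assertions, so the proposal is an outline of a possible approach rather than a proof.
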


Our aim is to show that
$\part(\RS(a_1, \dots, a_n,-a_n, \dots, -a_1))$
has the same content as
$\part(\RS(a_1, \dots, a_n,0,-a_n, \dots, -a_1))$.
We do this by exploiting the results in \cite{Pi}, which explain how to relate
$\operatorname{G}_0(a_1, \dots, a_n,-a_n, \dots, -a_1)$
and
$\operatorname{G}_1(a_1, \dots, a_n,0,-a_n, \dots, -a_1)$.
To explain these results,
we need to define the {\em cycles} of a standard domino tableau.
This requires a few other definitions as well.

We define coordinates on a Young diagram by labeling its rows and columns.
We declare that the bottom row is row $1$, the row above the bottom is row $2$, and so on.
We declare that the left most column is column $1$, the column to its right is column $2$, and so on.
Now we say the box in position $(i,j)$ is {\em fixed} if $i+j$ is odd and the diagram has
an even number of boxes or if $i+j$ is even and the diagram has an odd number of boxes.

Let $R$ be a domino tableau,
and let $D(k)$ be a domino with label $k$ in $R$.
If the fixed coordinate of $D(k)$ occurs in the lower box or right box of $D(k)$,
let $E$ denote the square below and to the right of the fixed coordinate of $D(k)$.
If the fixed coordinate of $D(k)$ occurs in the upper box or left box of $D(k)$,
let $E$ denote the square above and to the left of the fixed coordinate of $D(k)$.
We label $E$ with the integer $m$ determined via
\[
  m =
   \begin{cases}
         l & \text{if $E$ is a square in $R$ and $l$ is the label of $E$'s square in $R$;} \\
         -1 & \text{if either coordinate of $E$ is $0$}; \\
          \infty & \text{if $E$ lies above or to the right of $R$}.
   \end{cases}
\]
Now we define $D'(k)$ to be a domino which contains two squares, one in the fixed position of $D(k)$,
and the other which is adjacent to $E$ and so that the subdiagram containing
$D'(k)$ and $E$ has decreasing columns and increasing rows.

For example, if
\[
R =
  \begin{array}{c}
     \begin{picture}(80,40)
      \put(0,0){\line(1,0){80}}
      \put(40,20){\line(1,0){40}}
      \put(0,40){\line(1,0){40}}
      \put(0,0){\line(0,1){40}}
      \put(20,0){\line(0,1){40}}
      \put(40,0){\line(0,1){40}}
      \put(80,0){\line(0,1){20}}
         \put(10,20){\makebox(0,0){{1}}}
         \put(30,20){\makebox(0,0){{2}}}
         \put(60,10){\makebox(0,0){{3}}}
     \end{picture}
  \end{array},
\]
then
$D'(1)$ is a domino which occupies positions $(2,1)$ and $(3,1)$,
$D'(2)$ is a domino which occupies positions $(1,2)$ and $(1,3)$,
$D'(3)$ is a domino which occupies positions $(1,4)$ and $(1,5)$.

Suppose a domino tableau is labeled with $\{1, \dots, n\}$.
We use this to generate an equivalence relation on $\{1, \dots, n \}$
via
$i \sim j$ if $D(j)$ and $D'(i)$ share a box.
The {\em cycles of a domino tableau} are the equivalence classes of this equivalence relation.
For example, if $R$ is as above then
the cycles of $R$ are $\{1\}$ and $\{2,3\}$.

If $R$ is a domino tableau with an even number of boxes and $c$ is a cycle of $R$,
then we can define a new domino tableau $R' = \operatorname{MT}(R,c)$
by replacing $D(k)$ with $D'(k)$ for every $k \in c$.
This will remove one box and add one box to the underlying Young diagram of $R$.
If the box removed is in position $(1,1)$ then we put a box with $0$ in position $(1,1)$ of $R'$,
so that we do in fact get another domino tableau.
For example if $R$ is as above then
\[
  \operatorname{MT}(R,\{1\}) =
  \begin{array}{c}
     \begin{picture}(80,60)
      \put(0,0){\line(1,0){80}}
      \put(0,20){\line(1,0){20}}
      \put(40,20){\line(1,0){40}}
      \put(20,40){\line(1,0){20}}
      \put(0,60){\line(1,0){20}}
      \put(0,0){\line(0,1){60}}
      \put(20,0){\line(0,1){60}}
      \put(40,0){\line(0,1){40}}
      \put(80,0){\line(0,1){20}}
         \put(10,10){\makebox(0,0){{0}}}
         \put(10,40){\makebox(0,0){{1}}}
         \put(30,20){\makebox(0,0){{2}}}
         \put(60,10){\makebox(0,0){{3}}}
     \end{picture}
  \end{array},
\]
and
\[
  \operatorname{MT}(R,\{2,3\}) =
  \begin{array}{c}
     \begin{picture}(100,40)
      \put(0,0){\line(1,0){100}}
      \put(20,20){\line(1,0){80}}
      \put(0,40){\line(1,0){20}}
      \put(0,0){\line(0,1){40}}
      \put(20,0){\line(0,1){40}}
      \put(60,0){\line(0,1){20}}
      \put(100,0){\line(0,1){20}}
         \put(10,20){\makebox(0,0){{1}}}
         \put(40,10){\makebox(0,0){{2}}}
         \put(80,10){\makebox(0,0){{3}}}
     \end{picture}
  \end{array}.
\]

Observe that the operator $\operatorname{MT}$ removes a box and adds a box to the Young diagram underlying $R$,
and that the removed box is either in position $(1,1)$, or is a {\em removable box} of $R$, that is
if it is removed you still have a valid Young diagram.

A key feature of $\operatorname{MT}$ is that is does not change the content
of the underlying partition:
\begin{Theorem} \label{T:content}
   Let $R$ be a domino tableau with an even number of boxes, let $c$ be a cycle of $R$, and let
   $\bp = \part(R)$ and $\bq = \part(\operatorname{MT}(R,c))$.
   Then $\content(\bp) = \content(\bq)$.
\end{Theorem}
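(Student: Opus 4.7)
The plan is to show that the content, which may be rewritten as the multiset $\{\lfloor \beta_i / 2 \rfloor : 1 \le i \le m\}$ of floor-halves of the beta numbers $\beta_i = p_i + i - 1$, is invariant under the move $\operatorname{MT}(R,c)$. First I would unpack the effect of the cycle move on the underlying Young diagram of $R$: by the very definition of $D'(k)$, the dominoes $D(k)$ and $D'(k)$ share precisely the fixed cell of $D(k)$, so the cells that appear in exactly one of $D(k)$ and $D'(k)$ are non-fixed. After cancellations along the cycle, the net change to the shape is either (Case B) to remove one non-fixed cell and add one non-fixed cell, or (Case A, when the removed cell is $(1,1)$) simply to add one non-fixed cell, with $(1,1)$ staying in the shape as a $0$-box.

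Next I would verify that the padded number $m$ of parts (chosen odd) is the same for $\part(R)$ and $\part(\operatorname{MT}(R,c))$. This is a short case analysis. The only moves that could potentially alter $m$ are those in which a new top row of length $1$ is created or an existing top row of length $1$ is emptied. But a new row at position $(h+1,1)$ is non-fixed only if $h$ is even, and in that case the old padded length $h+1$ equals the new padded length; similarly, emptying the top row $(h,1)$ is permitted only when $h$ is odd, which again preserves the padded length. Simultaneous creation and destruction of rows within one $\operatorname{MT}$ move would force $h$ to be both odd and even, which is impossible. Hence $m$ is unchanged in every valid $\operatorname{MT}$ move.

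With $m$ fixed, the effect on beta numbers reduces to: adding a non-fixed cell at $(r_+,c_+)$ increments a single $\beta_{i_+}$ by $1$, and (in Case B) removing a non-fixed cell at $(r_-,c_-)$ decrements a single $\beta_{i_-}$ by $1$. A short parity computation using that $r_+ + c_+$ is even and $m$ is odd shows the corresponding $\beta$ is even before the increment and odd after, with the floor $\lfloor \beta/2 \rfloor$ unchanged; the removal case is symmetric. Since every beta change induced by $\operatorname{MT}$ is of this form, the multiset $\{\lfloor \beta_i/2 \rfloor\}$, and hence $\content$, is preserved.

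The main obstacle is the $m$-preservation case analysis in the second paragraph: one must carefully consider, in tandem, the special Case A (where the removed cell $(1,1)$ stays in the shape as a $0$-box, so the partition actually grows by one cell without the usual compensating removal) alongside the Case B scenarios in which the add-and-remove pair involves either a row being created at the top or a length-one top row being emptied, and verify that the non-fixed parity constraint systematically rules out the configurations in which the padded length $m$ would shift by $\pm 2$.
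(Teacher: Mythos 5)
Your proposal is correct and follows essentially the same strategy as the paper: observe that the cells exchanged by $\operatorname{MT}$ are the non-fixed cells of the dominoes, and then turn that into a parity constraint on the beta numbers $p_i+i-1$, which simultaneously handles the preservation of the padded part count $m$ and the invariance of each $\lfloor \beta_i/2 \rfloor$. The only cosmetic difference is that the paper routes the parity bookkeeping through its Lemma \ref{L:evenodd} on domino-shape partitions and then treats the ``same number of boxes'' and ``one extra box'' cases separately, whereas you derive the needed parities directly from $r+c$ being even at each moved cell; this is a cleaner packaging of the identical underlying computation.
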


\begin{proof}
   First we rule out the case  that
    $\bp$ has an odd number (say $2m+1$) of parts and $\bq$ has one more part than $\bp$.
   Suppose, for a contradiction, that $\bq$ has $2m+2$ parts, so the top row of $\operatorname{MT}(R,c)$ has one box.
   Let $D'(k)$ be the domino in $\operatorname{MT}(S,c)$ which covers this box.  So the box in the fixed position of $D'(k)$ must be the box
   in position $(2m+1,1)$, which is a contradiction since $2m+1+1$ is even.

   Next we rule out the case that $\bp$ has an even number (say $2m$) of parts
  and $\bq$ has one less part than $\bp$.
   Suppose this is the case, so the top row $\bp$ has length one,
   so there must be a domino $D(k)$ which occupies positions $(2m-1,1)$ and $(2m,1)$ of $\bp$.
   Now $(2m,1)$ is the fixed position of this domino, so $D'(k)$ will also have a box in
    position $(2m,1)$, hence $\bq$ has at least $2m$ parts, which is a contradiction.
   Thus we have established that the number of integers in $\content(\bq)$ is the same as the number of integers in $\content(\bp)$.

   Let $\bp = (p_1 \leq \dots \leq p_{2m+1})$, where $p_1$ may be $0$.
   Now we consider the case that $\operatorname{MT}(R,c)$ has the same number of boxes as $R$.
   Let $\bq = (q_1 \leq \dots \leq q_{2m+1})$, where $q_1$ may be $0$.
   So we must have that $q_i = p_i$ except for $i=j$ and $i=k$ for some
   integers $j,k$ where $j \neq k$, and $q_j = p_j+1$ and $q_k = p_k-1$.
    By Lemma \ref{L:evenodd} we have that one of $p_j+j-1, p_k+k-1$ must be even and one must be odd, because otherwise
    $(q_1, q_2+1, \dots, q_{2m+1}+2m)$ would not have one more even element than odd elements.
     The box at position $(j,p_j)$ of $\operatorname{MT}(R,c)$ is the box which gets added to the Young diagram of $R$.
     Thus this box is a box which is in $D'(k)$ but is not in $D(k)$.
      This implies that this box is not the box in fixed position in $D'(k)$, thus $p_j+j$ is even, so $p_j+j-1$ must be odd,
       and $p_k+k-1$ is even.
      This implies that $\bp$ and $\bq$ have the same content.

   Now we consider the case that $\operatorname{MT}(R,c)$ has one more box than $R$.
   Let $\bq = (q_1 \leq \dots \leq q_{2m+1})$, where $q_1$ may be $0$.
   So we must have that $q_i = p_i$ except for $i=j$ for some integer $j$, where $q_j = p_j+1$.
   Note that $p_j+j-1$ must be even since $(q_1, q_2+1, \dots, q_{2m+1}+2m)$ must have one more odd number than even number.
      This implies that $\bp$ and $\bq$ have the same content.
\end{proof}

For a list of cycles $c_1, \dots, c_m$ of a domino tableau $R$ with an even number of boxes let
$R_i = \operatorname{MT}(R_{i-1},c_i)$, where $R_0 = R$.
Now let $\operatorname{MT}(R,c_1, \dots, c_m) = R_m$.

The following theorem is a less specific version of \cite[Theorem 3.1]{Pi}:
\begin{Theorem} \label{T:Pi}
Let
$$R = \operatorname{G}_0(a_1, \dots, a_n, -a_n, \dots, -a_1),$$ and
$$R' = \operatorname{G}_1(a_1, \dots, a_n, 0,-a_n, \dots, -a_1),$$
where $(a_1, \dots, a_n, -a_n, \dots, -a_1)$ be a signed permutation of $(-n, \dots, -1, 1, \dots, n)$.
Then there exists cycles $c_1, \dots, c_m$ of $R$ such that
$R' = \operatorname{MT}(R,c_1, \dots, c_m)$.
\end{Theorem}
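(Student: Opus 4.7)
The plan is to deduce this result from the more detailed statement \cite[Theorem 3.1]{Pi}, which provides an explicit canonical sequence of cycles relating the two domino tableaux. Since our theorem asks only for the existence of such a sequence, the proof reduces to translating Pietraho's framework into the present notation and reading off the conclusion.

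The first step is to match notations. Pietraho works with Garfinkle's domino insertion algorithms $\operatorname{G}_0$ and $\operatorname{G}_1$ applied directly to elements of the Weyl group of type $\mathrm{B}_n$, viewed as signed permutations. Identifying the skew symmetric string $(a_1,\dots,a_n,-a_n,\dots,-a_1)$ with the corresponding signed permutation of $(-n,\dots,-1,1,\dots,n)$, and similarly $(a_1,\dots,a_n,0,-a_n,\dots,-a_1)$ with a signed permutation of $(-n,\dots,-1,0,1,\dots,n)$, Proposition~\ref{P:Le} shows that $R$ and $R'$ agree with the outputs of $\operatorname{G}_0$ and $\operatorname{G}_1$ respectively, so Pietraho's theorem applies in this setting.

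The second step is to check that the explicit sequence of cycles produced by \cite[Theorem 3.1]{Pi} fits into the iterative definition of $\operatorname{MT}(R,c_1,\dots,c_m)$ used here; in particular, one must verify that each $c_{i+1}$ gives rise to a well defined moving-through operation on the intermediate tableau $R_i = \operatorname{MT}(R, c_1, \dots, c_i)$. This is built into Pietraho's construction, in which each successive cycle is extracted by a prescribed combinatorial rule from the current intermediate tableau, so that the iteration never stalls and the labels carried along behave consistently.

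I expect the main obstacle to be purely notational: Pietraho's paper records extra refinements, such as the distinction between ``open'' and ``closed'' cycles together with a parity bookkeeping for the underlying Young diagram, none of which are needed here. The effort therefore lies in carefully stripping away these additional structures to isolate the bare existence statement that we require, rather than in any new combinatorial argument.
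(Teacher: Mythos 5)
Your approach is the same as the paper's: the paper simply states that Theorem~\ref{T:Pi} is a less specific version of \cite[Theorem 3.1]{Pi} and offers no further proof, which is exactly the reduction you outline. One small point: invoking Proposition~\ref{P:Le} is unnecessary here, since $R$ and $R'$ are already defined in the statement of Theorem~\ref{T:Pi} as the outputs of $\operatorname{G}_0$ and $\operatorname{G}_1$; the translation via $\operatorname{DT}\circ\RS$ supplied by Proposition~\ref{P:Le} is needed only later, in the proof of Corollary~\ref{C:BV}, where one passes from the Robinson--Schensted picture to the Garfinkle picture.
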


Now we get the following corollary.

\begin{Corollary} \label{C:BV}
Let $\aa = (a_1, \dots, a_n, -a_n, \dots, -a_1)$ be a skew symmetric string of
integers.  Then
$\operatorname{BV}(\aa) = \operatorname{BV}'(\aa)$.
\end{Corollary}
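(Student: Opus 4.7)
The plan is to reduce the statement to a content-preservation claim about partitions, and then invoke the domino-tableau machinery developed above. Since Steps 2 and 3 of both $\BV$ and $\BV'$ depend only on the content of the partition produced in Step 1, it suffices to establish
$$\content(\part(\RS(a_1, \dots, a_n, -a_n, \dots, -a_1))) = \content(\part(\RS(a_1, \dots, a_n, 0, -a_n, \dots, -a_1))).$$

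To prove this, I would set $w = (a_1, \dots, a_n, -a_n, \dots, -a_1)$ and $w' = (a_1, \dots, a_n, 0, -a_n, \dots, -a_1)$, and consider the domino tableaux $R = \DT(\RS(w))$ and $R' = \DT(\RS(w'))$. Both are defined by the lemma preceding Proposition \ref{P:Le}, since $w$ and $w'$ are signed permutations. A direct inspection of the $\DT$ algorithm shows that the underlying Young diagram is preserved: $\DT$ merely performs swaps internal to a tableau and then groups each pair of squares labelled $-i$ and $i$ into a single domino (inserting a $0$ at position $(1,1)$ in the odd-size case). Hence $\part(R) = \part(\RS(w))$ and $\part(R') = \part(\RS(w'))$, and the target identity reduces to $\content(\part(R)) = \content(\part(R'))$.

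By Proposition \ref{P:Le} we may identify $R = \operatorname{G}_0(w)$ and $R' = \operatorname{G}_1(w')$. Theorem \ref{T:Pi} then supplies a list of cycles $c_1, \dots, c_m$ of $R$ with $R' = \operatorname{MT}(R, c_1, \dots, c_m)$, and Theorem \ref{T:content} guarantees that each individual $\operatorname{MT}$-move preserves the content of the underlying partition. Chaining these preservations along $R = R_0, R_1, \dots, R_m = R'$ yields $\content(\part(R)) = \content(\part(R'))$, which completes the argument. The only delicate point is that Theorem \ref{T:content} is stated for domino tableaux with an even number of boxes, whereas the intermediate $R_i$ may have odd box counts; however, the case analysis in the proof of that theorem is symmetric in the two parities and extends verbatim to the odd case, so this is a cosmetic issue rather than a genuine obstacle.
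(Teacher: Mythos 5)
Your approach matches the paper's almost exactly (reduce Steps 2--3 to content-preservation, then chain $\operatorname{G}_0 \to \operatorname{G}_1$ via Theorems \ref{T:Pi} and \ref{T:content} using Proposition \ref{P:Le}), but there is one genuine gap.

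You assert that $w$ and $w'$ are signed permutations of $(-n,\dots,-1,1,\dots,n)$ and $(-n,\dots,-1,0,1,\dots,n)$ respectively, and then invoke the lemma preceding Proposition~\ref{P:Le} to conclude that $\DT$ is defined. But the corollary is stated for an arbitrary skew-symmetric string of integers: the $a_i$ can repeat or take values outside $\{\pm 1,\dots,\pm n\}$, in which case $w$ is \emph{not} a signed permutation, $\DT$ and $\operatorname{G}_0,\operatorname{G}_1$ do not directly apply, and the whole domino-tableau machinery is off the table. The paper closes this gap by observing that $\RS(a_1,\dots,a_n,-a_n,\dots,-a_1)$ and $\RS(a_1,\dots,a_n,0,-a_n,\dots,-a_1)$ depend only on the relative order of the entries, so one may replace $\aa$ by an actual signed permutation with the same relative order without changing either output partition, and then run the argument you give on the replacement. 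Without that reduction step, your proof only establishes the special case.

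On the other hand, your worry about the parity hypothesis in Theorem~\ref{T:content} (that the intermediate $R_i$ in the chain of $\operatorname{MT}$-moves might have an odd number of boxes) is a reasonable thing to flag, and the paper simply does not comment on it; your reassurance that the case analysis is parity-symmetric is the right way to discharge it. So once you add the relative-order reduction, your argument lines up with the paper's intended proof.
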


\begin{proof}
This follows from Proposition \ref{P:Le}, and Theorems \ref{T:content} and \ref{T:Pi} for the case
where $\aa$ is signed permutation of $(-n, \dots, -1, 1, \dots, n)$.  The general case follows, because $\bq = \RS(a_1 , \dots , a_n , -a_n , \dots , -a_1 )$
and $\bq' = \RS(a_1 , \dots , a_n , 0, -a_n , \dots , -a_1 )$ depend only on the relative order of the $a_i$, so that we may replace $(a_1 , \dots , a_n , -a_n , \dots , -a_1 )$
 by signed a permutations without altering $\bq$ or $\bq'$.
\end{proof}

\end{document}